\newcommand{\R}{\mathbb R}
\newcommand{\Z}{\mathbb Z}
\newcommand{\N}{\mathbb N}
\newcommand{\sgn}{\rm sign}
\newcommand{\Frac}[2]{\displaystyle \frac{#1 }{#2}}
\newcommand{\eps}{\varepsilon}
\newcommand{\toh}{\mathcal{T}[h_b]}
\newcommand{\mfT}{{\mathfrak T}}
\newtheorem{theorem}{Theorem} [section]
\newtheorem{lemma}{Lemma} [section]
\newtheorem{proposition}{Proposition} [section]
\newtheorem{definition}{Definition} [section]
\newtheorem{remark}{Remark}[section]
\newtheorem{hypothesis}{Hypothesis}
\let\ssection=\section\renewcommand{\section}{\setcounter{equation}{0}\ssection}
\newcommand{\tendsto}[1]{\renewcommand{\arraystretch}{0.5}
\begin{array}[t]{c}
\longrightarrow \\
{ \scriptstyle #1 }
\end{array}
\renewcommand{\arraystretch}{1}}
\title[Global solutions for the Boussinesq-Peregrine system]{Global solutions for the one-dimensional Boussinesq-Peregrine system under small bottom variation}
\author[Luc Molinet]{Luc Molinet$^1$}
\thanks{ $^1$Institut Denis Poisson, Universit\'e de Tours, Universit\'e d'Orl\'eans, CNRS, Parc Grandmont, 37200, France ({\tt  luc.molinet@univ-tours.fr}).}
\author[Raafat Talhouk]{Raafat Talhouk$^{2,3}$}
\thanks{ $^2$ Léonard de Vinci Pôle Universitaire, Research Center, 92 916 Paris La D\'efense, France. ({\tt raafat.talhouk@devinci.fr}).}
\thanks{ $^3$Laboratoire de math\'ematiques-EDST, Facult\'e des Sciences et EDST, Universit\'e Libanaise, Hadat, Liban. }
\begin{document}
\subjclass[2010]{35Q35,35L56,35B30} 
\keywords{Boussinesq-Peregrine system, small bottom variation, global existence, entropy solution.}
\begin{abstract}
    
The Boussinesq-Peregrine system is derived from the water waves system in presence of topographic variation under the hypothesis of shallowness and small amplitude regime. The system becomes  significantly simpler (at least in the mathematical sens) under the hypothesis of small topographic variation.
In this work we study the long time and global well-posedness   of the Boussinesq-Peregrine system. We start by showing the intermediate time  well-posedness  in the case of  general topography (i.e. the amplitude of the bottom graph $\beta=O(1)$). The novelty resides in the functional setting,  $H^s(\R), \, s> \frac {1} {2}.$ Then  we show our main result establishing that  the global existence result obtained in \cite{MTZ} in the flat bottom case is  still valid for the Boussinesq-Peregrine system under the hypothesis of small amplitude bottom variation (i.e. $\beta =O(\mu)$). More precisely we prove that   this system is  unconditionally  globally well-posed in the Sobolev spaces of type $ H^s (\R), \, s> \frac {1} {2}$.  Finally, we  show  the existence of a weak global solution in the Schonbek sense \cite{Sch},  i.e.  existence of low regularity entropic solutions  of the small bottom amplitude Boussinesq-Pelegrine equations  emanating from $ u_0 \in H^1 $  and $ \zeta_0 $ in an Orlicz class   as weak limits of  regular solutions.  
\end{abstract}
\maketitle

\section{Introduction}
In this paper we are concerned with the  system introduced by D. H. Peregrine \cite{Per},  in the case of small amplitude bottom variation and one space dimension. The Peregrine system was derived  from the Euler water waves one  to describe weak amplitude $(\eps=O(\mu))$ long wave propagation at the surface of ideal incompressible liquid for irrotational flow  over variable bottom topography submitted to  gravitational force where the surface tension has been neglected.  This system is the generalization of the system introduced by J.  V.  Boussinesq in 1871 (\cite{Bou1}, \cite{Bou2}, see also \cite{BCS02}, \cite{BCS04})  to model the same flow over flat bottom.  
 The corresponding PDE's system,  is given by:

\begin{equation}\label{BP}
\left\{
\begin{array}{lcl}
\zeta_t+(hu)_x &=&0,\\
(1+\mu\toh)u_t +\zeta_x+\eps uu_x&=&0.
\end{array} \right.
\end{equation}

The quantity $h=1+\eps\zeta(x,t)-\beta b(x)$ corresponds to the normalized total height of the liquid and $1+\eps\zeta(x,t)$ describes the free surface of the liquid, $b$ being the function representing the bottom. $\eps$ and $\beta$ are related respectively to the free surface wave amplitude and the bottom one,  $x$ is the spatial position which is proportional to distance in the direction of propagation. The quantity $u(x,t)$ is the horizontal velocity field of the liquid particle which is at position $x$ at time $t.$ Finally the operator $\toh$ is given by: 
\begin{equation}\label{deftoh}
 \toh u=-\frac{1}{3h_b}\left( h_b^3 u_x \right)_x +\frac{\beta}{2h_b}\left[ \left( h_b^2b_x u \right)_x -h_b^2 b_x u_x \right]+\beta^2b_x^2u 
 \end{equation} 
 with $h_b=1-\beta b$.

Mésognon-Gireau in \cite{Mes} gave the first result concerning a long time existence  (i.e.  existence time of order 
$O(\frac{1}{\eps})$ for a modified Boussinesq-Peregrine equation (\ref{BP}) in
dimension 1.  The existence and uniqueness are showed in $H^N\times H_{\mu }^{N+1}, \, N\in \N), N\geq 4$. He also showed  a local  existence (existence time of order $O(1)$) and uniqueness result in all dimension.  V. Duch\^ene and S. Israwi  in \cite{DIs} have shown existence and uniqueness in  long time for the system (\ref{BP}) in  dimension one and two in a  regular functional sitting ($H^N\times H_{\mu}^{N+1},\, N\in \N), N\geq 4.$  These results are obtained for free amplitude bottom. (i.e.  no smallness hypothesis on the size of bottom amplitude $\beta$).  Here $ H_{\mu}^{s+1}$ stand for the Sobolev space  $ H^{s+1}(\R) $ endowed with the norm 
\begin{equation}\label{defHmu}
|u|_{H^{s+1}_\mu}^2 :=|u|_{H^s}^2+\mu|u_x|_{H^s}^2 \; .
\end{equation}
In this paper, we study the well-posedness of system \eqref{BP} in low Sobolev regularity $H^s\times H_{\mu }^{s+1},\; s>\frac{1}{2}$. In a first time, we show that the system (\ref{BP}) is  well-posed in intermediate time (i.e. existence time of order $O((\epsilon\vee \beta)^{-1}\mu^\frac{1}{2} )$. Then, we consider system  (\ref{BP}) under the hypothesis of small amplitude bottom $\beta=O(\mu)$  (see also \cite{Lanlivre13}, p. 155). This hypothesis enables to write 
\begin{equation}\label{toh}
\begin{array}{lll}
 1+\mu\toh u &=& 1-\frac{\mu}{3h_b}\left( h_b^3 u_x \right)_x +\frac{\mu\beta}{2h_b}\left[ \left( h_b^2b_x u \right)_x -h_b^2 b_x u_x \right]+\mu\beta^2b_x^2u \\ 
& &\\
 &=& {\displaystyle   1-\frac{\mu}{3}\,u_{xx}+O(\beta\mu)}.
 \nonumber
 \end{array}
 \end{equation} 

that leads to    the following simplified system after  dropping all terms of order  $O(\mu\beta)$ :

\begin{eqnarray}\label{BPW}
\left\{
\begin{array}{lcl}
\zeta_t+(hu)_x &=&0,\\
u_t +\zeta_x+\eps uu_x- \frac{\mu}{3}u_{txx}&=&0.
\end{array} \right.
\end{eqnarray}

Our main result is that this last system is unconditionally globally well-posedness  in $H^s \times H^{s+1}$ for $s> \frac{1}{2}$. As it was first noticing  by Schonbek (\cite{Sch})  for the "classical" Boussinesq equation, the crucial point consists in showing that the entropy functional associated with  the hyperbolic system, of which system \eqref{BPW}  may be seen as a perturbation, can be adapted to obtain a sort  of  "entropy" functional for \eqref{BPW}. It is worth emphasizing that, in contrast to the flat bottom case (see \cite{Sch} and  also \cite{MTZ}), in our  context we cannot prove that this functional is non increasing  but only that it is locally bounded, which is sufficient for our purpose.
Finally, we show  the existence of  weak global solution in the sense of Schonbek \cite{Sch} as a limit of regular solution.

In our knowledge these results for this system are new and pertinent. In particular this   is the first global existence result in the context of non flat bottom for Boussinesq type system.

 \subsection{Statement of the main results}
 \begin{definition}\label{def}  Let $ s>1/2,\,T>0$ and $b\in H^{s+1}\cap W^{\frac{5}{2}^+,\infty}$. We will say that $(\zeta,u)\in L^\infty(]0,T[;H^s\times H^{s+1}_\mu) $ is a solution to \eqref{BP} (or \eqref{BPW}) associated with the initial datum $ (\zeta_0,u_0) \in H^s(\R)\times H^{s+1}_\mu(\R) $  if
  $ (\zeta,u) $ satisfies \eqref{BP} (or \eqref{BPW})   in the distributional sense, i.e. for any test function $ \psi\in C_c^\infty(]-T,T[\times \R) $,  it holds
  \begin{equation}\label{weak}
  \left\{ 
 \begin{array}{l}

{ \displaystyle \int_0^\infty \int_{\R} \Bigl[(\psi_t  +\psi_x u (1+\eps\zeta-\beta b ) \Bigr] \, dx \, dt +\int_{\R} \psi(0,\cdot) \zeta_0 \, dx =0}\\
 { \displaystyle  \int_0^\infty \int_{\R} \Bigl[(1+\mu\toh)\psi_t u +\psi_x\zeta +  \psi_x  u^2/2 \Bigr] \, dx \, dt +\int_{\R} \psi(0,\cdot) u_0 \, dx =0}
  \end{array}
  \right.
  \end{equation} 
 \end{definition}
 
 \begin{remark} \label{rem2} Note that $ H^s(\R) $ is an algebra for $ s>1/2 $ and thus  $ \zeta u $ and $ u^2 $ are well-defined and belong to $L^\infty(]0,T[\, ;H^s(\R) $.   
  Therefore \eqref{weak} forces $ (\zeta_t,u_t)  \in L^\infty(]0,T[ \, ; H^{s-2}(\R)\times H^{s+1}) $  and  thus \eqref{BP} (or  \eqref{BPW}) is satisfied in $L^\infty(]0,T[ \,; H^{s-2}(\R)\times H^{s+1}) $. In particular, $ (\zeta,u)\in C([0,T] \, ; H^{s-2}(\R)\times H^{s+1})$ and \eqref{weak} forces 
 $ (\zeta(0),u(0))=(\zeta_0,u_0)$.
  \end{remark}
 \begin{definition}\label{def2}  Let  $ s>1/2$ and $ b\in H^{s+1}(\R)$. We will say that the Cauchy problem associated with \eqref{BPW} is unconditionally globally well-posed in $ H^s(\R )\times H^{s+1}_\mu(\R) $ if 
 \begin{enumerate}
 \item for any initial data $ (\zeta_0,u_0)\in H^s( {\R})\times H^{s+1}_\mu(\R)  $, satisfying  $1+\varepsilon \zeta_0-\beta b >0 $ on $\R$,  there exists a solution
 $ (\zeta,u) \in C(\R_+\, ; H^s(\R)\times H^{s+1}_\mu(\R) ) $ to \eqref{BPW} emanating from $ (\zeta_0,u_0) $.
 \item for any $ T>0$, this solution   is the unique solution to  \eqref{BPW} associated with $ (\zeta_0,u_0) $ that belongs to  $ L^\infty(]0,T[\, ;H^s(\R)\times H^{s+1}_\mu(\R) )$.
 \item  for any $ T>0 $, the solution-map
  $(\zeta_0, u_0) \mapsto (\zeta,u) $ is continuous from  $ H^s(\R)\times H^{s+1}_\mu(\R) $    into $C([0,T]\,; H^s(\R)\times H^{s+1}_\mu(\R)) $.
  \end{enumerate}
  
 \end{definition}

 \begin{theorem}\label{maintheorem}
 For any $ s>1/2 $ and  $b\in H^{s+1}(\R),$ then the Cauchy problem (\ref{BPW})  is unconditionally globally well-posed 
in $H^s(\R)\times H^{s+1}_\mu(\R)$ in the sense of Definition \ref{def2} .
\end{theorem}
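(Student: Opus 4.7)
\medskip
\noindent\textbf{Proof plan for Theorem \ref{maintheorem}.}

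The strategy is to combine the intermediate time well-posedness result (of the first part of the paper, which already covers $H^s\times H^{s+1}_\mu$ for $s>1/2$ under $\beta=O(1)$) with a global \emph{a priori} bound coming from a suitable Schonbek-type entropy functional, then to bootstrap this bound to full $H^s\times H^{s+1}_\mu$ regularity. First, I would start a maximal solution $(\zeta,u) \in C([0,T^\ast);H^s\times H^{s+1}_\mu)$ emanating from $(\zeta_0,u_0)$ by invoking the intermediate time theorem, and introduce the blow-up alternative: either $T^\ast=+\infty$, or as $t\to T^\ast$ either $\|(\zeta,u)\|_{H^s\times H^{s+1}_\mu}\to\infty$ or $\inf_x h(t,x)\to 0$. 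The goal is then to rule out both scenarios on any finite interval $[0,T]$.

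The core of the argument is the construction of a modified energy/entropy $\mathcal{E}(\zeta,u)$ adapted from \cite{Sch,MTZ} to the non-flat bottom case. Formally, one would set
\begin{equation*}
\mathcal{E}(\zeta,u)=\int_{\R}\Bigl[h\,u^2+\tfrac{\mu}{3} h\,u_x^2+2G(\zeta)\Bigr]dx,
\end{equation*}
where $G$ is the primitive making the flat-bottom version an entropy for the underlying Saint-Venant system (so that $G''(\zeta)=\eps$ to leading order). Using the equations \eqref{BPW} and integrating by parts, the terms that would cancel in the flat-bottom case ($b\equiv 0$) survive only as commutators of the form $\int b_x\,F(\zeta,u,u_x)\,dx$, which by Cauchy--Schwarz and the smallness $\beta=O(\mu)$ can be estimated by $C(\|b\|_{W^{1,\infty}})\,\mathcal{E}(\zeta,u)$. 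One thus gets a Gronwall inequality $\frac{d}{dt}\mathcal{E} \le C\,\mathcal{E}$ (instead of the dissipative estimate of the flat-bottom case), which yields boundedness of $\mathcal{E}$ on any finite interval and, by Sobolev embedding, a uniform $L^\infty$ control on $\zeta$ and $u$. In particular this ensures non-cavitation, $\inf_{[0,T]\times\R}h>0$, for any $T<T^\ast$. The hard part of the whole proof lies here, in verifying that the bottom-induced source terms in $\frac{d}{dt}\mathcal{E}$ are indeed dominated by $\mathcal{E}$ itself and no uncontrolled growth is hidden in the $\mu u_{txx}$ term.

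With the global $H^1\times H^1$-type bound and the lower bound on $h$ in hand, I would next propagate higher regularity. Differentiating \eqref{BPW}, applying $\Lambda^s=(1-\partial_x^2)^{s/2}$, pairing with $\Lambda^s\zeta$ and $\Lambda^s u+\frac{\mu}{3}\Lambda^s u$ (via the elliptic operator $1-\frac{\mu}{3}\partial_x^2$), and using Kato--Ponce commutator estimates together with the fact that $H^s$ is an algebra for $s>1/2$, one obtains
\begin{equation*}
\frac{d}{dt}\bigl(\|\zeta\|_{H^s}^2+\|u\|_{H^{s+1}_\mu}^2\bigr) \le C\bigl(\|(\zeta,u)\|_{L^\infty\times W^{1,\infty}}\bigr)\,\bigl(\|\zeta\|_{H^s}^2+\|u\|_{H^{s+1}_\mu}^2\bigr),
\end{equation*}
where the $L^\infty\cap W^{1,\infty}$ norms are already controlled by $\mathcal{E}$. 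A second application of Gronwall then excludes the blow-up scenario on any finite interval and yields $T^\ast=+\infty$, giving global existence.

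Finally, for unconditional uniqueness and continuous dependence, I would take two solutions $(\zeta_i,u_i)$ in $L^\infty(]0,T[\,;H^s\times H^{s+1}_\mu)$ emanating from the same (or nearby) data, set $(\tilde\zeta,\tilde u)=(\zeta_1-\zeta_2,u_1-u_2)$, and perform an energy estimate at a level one derivative below, namely in $H^{s-1}\times H^s_\mu$, which is still an algebra for $s>1/2$. Writing the difference equations and using the $H^s\times H^{s+1}_\mu$ bound on each solution as a coefficient, one obtains a closed Gronwall inequality
\begin{equation*}
\frac{d}{dt}\bigl(\|\tilde\zeta\|_{H^{s-1}}^2+\|\tilde u\|_{H^s_\mu}^2\bigr)\le C(T)\bigl(\|\tilde\zeta\|_{H^{s-1}}^2+\|\tilde u\|_{H^s_\mu}^2\bigr),
\end{equation*}
from which uniqueness and Lipschitz dependence at one derivative below follow; combined with weak-$*$ compactness in $L^\infty(H^s\times H^{s+1}_\mu)$ and the Bona--Smith regularization procedure, this upgrades to continuous dependence at the $H^s\times H^{s+1}_\mu$ level, completing the three requirements of Definition \ref{def2}.
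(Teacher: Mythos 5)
Your skeleton (local theory, entropy functional that is only locally bounded rather than decreasing, Gronwall at the $H^s$ level, difference estimate in $H^{s-1}\times H^s_\mu$ plus Bona--Smith for uniqueness and continuous dependence) matches the paper. But there is a genuine gap at the decisive step. The entropy that is compatible with the hyperbolic part of \eqref{BPW} is forced by the condition $h\alpha''(h)=1$ to be $\alpha(h)=h\ln h$, i.e.\ the normalized version $\sigma_0(h)=h\ln h-h+1$; it is \emph{not} quadratic at infinity, so the entropy bound only places $\zeta(t)$ in the Orlicz class $\Lambda_{\sigma_0}$, which is strictly weaker than $L^2(\R)$ for large values of $\zeta$. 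Even if it did give an $L^2$ bound, your assertion that ``by Sobolev embedding'' one gets uniform $L^\infty$ control on $\zeta$ is false in one space dimension: $L^2(\R)$ (a fortiori $\Lambda_{\sigma_0}$) does not embed into $L^\infty(\R)$. Since the $H^s$ Gronwall inequality \eqref{gfgf} has $|\zeta|_{L^\infty_{Tx}}$ and $|u_x|_{L^\infty_{Tx}}$ as coefficients, the whole globalization argument hinges on producing exactly these $L^\infty$ bounds, and your proposal does not contain a mechanism for doing so.

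The paper closes this gap with an Amick--Schonbek $L^N$ iteration (Proposition \ref{prop1}): one multiplies $\zeta_t+(hu)_x=0$ by $\zeta^N$ for odd $N$, inverts $1-\tfrac{\mu}{3}\partial_x^2$ in the second equation to write $u_x(t)=u_{0,x}+\tfrac{3}{\mu}\int_0^t\zeta+F$ with $F$ controlled in $L^2\cap L^\infty$ through the convolution kernel $K_\mu$ and the Orlicz/entropy bound (splitting $\R$ into the sets where $\eps\zeta-\beta b$ is below or above a threshold $M$), and derives a differential inequality for $\int_0^t|\zeta(s)|_{L^{N+1}}^{N+1}ds$ whose constants are uniform enough in $N$ to pass to the limit $N\to\infty$ and obtain $\sup_{[0,T[}|\zeta(t)|_{L^\infty}\le C_T$; the bound on $|u_x|_{L^\infty}$ then follows from the representation of $u_x$. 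Non-cavitation is subsequently obtained by the characteristics identity $h(t,q(t,x))=h_0(x)\exp(-\eps\int_0^t u_x(s,q(s,x))\,ds)$, which again requires the $L^\infty$ bound on $u_x$. You need to supply this entire $L^N\to L^\infty$ step (or an equivalent substitute); without it the proof does not close.
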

\begin{theorem}\label{T7}
Let  $(\zeta_0,u_0)\in  \Lambda_{\sigma_0}\times H^1_\mu$ { (in particular $1+\eps\zeta_0- \beta b >0$ {a.e. on $ \R $)}} then there exists a weak solution for the Boussinesq system (\ref{BPW}) with initial data $(\zeta_0,u_0)$ verifying that $(\zeta,u) \in L^\infty_{loc}(\R^+; \Lambda_{\sigma_0}\times H^1_\mu).$
\end{theorem}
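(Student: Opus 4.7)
\noindent\textbf{Proof plan for Theorem \ref{T7}.} The plan is to build the desired weak solution as a weak-$\ast$ limit of regular global solutions produced by Theorem \ref{maintheorem}, in the spirit of Schonbek \cite{Sch} and \cite{MTZ} but now allowing for a non-flat bottom. I would first approximate the initial datum by a sequence $(\zeta_{0,n},u_{0,n})\subset H^s\times H^{s+1}_\mu$, $s>1/2$, obtained by mollification and tuned so that $u_{0,n}\to u_0$ in $H^1_\mu$, $\zeta_{0,n}\to \zeta_0$ both a.e.\ on $\R$ and in the Orlicz-modular sense in $\Lambda_{\sigma_0}$, and such that the non-cavitation condition $1+\eps\zeta_{0,n}-\beta b>0$ on $\R$ is preserved. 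Theorem \ref{maintheorem} then yields, for each $n$, a global smooth solution $(\zeta_n,u_n)\in C(\R_+;H^s\times H^{s+1}_\mu)$.

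The core step is deriving uniform local-in-time estimates on $(\zeta_n,u_n)$. I would work with the adapted entropy functional
\[
 \mathcal E(\zeta,u):=\int_\R\Bigl[\Sigma(1+\eps\zeta-\beta b)+\tfrac{1}{2}(1+\eps\zeta-\beta b)u^2+\tfrac{\mu}{6}u_x^2\Bigr]dx,
\]
where $\Sigma$ is the convex primitive attached to the underlying hyperbolic system $\zeta_t+(hu)_x=0$, $u_t+\zeta_x+\eps uu_x=0$. At the regularity level of the approximants all computations are licit; the flat-bottom cancellations of \cite{Sch,MTZ} now leave a residual proportional to $\beta$ and involving $b_x$ which has no definite sign but is bounded by $C\,\mathcal E(\zeta,u)$ thanks to $b\in W^{1,\infty}$. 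Gronwall then yields the uniform bound $\mathcal E(\zeta_n(t),u_n(t))\le e^{Ct}\,\mathcal E(\zeta_{0,n},u_{0,n})$ for all $t\ge 0$, and hence $u_n\in L^\infty_{loc}(\R_+;H^1_\mu)$ and $\zeta_n\in L^\infty_{loc}(\R_+;\Lambda_{\sigma_0})$, together with the preservation of $1+\eps\zeta_n-\beta b>0$.

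Compactness then follows in a classical way. From \eqref{BPW} one gets $\partial_t\zeta_n$ bounded in $L^\infty_{loc}(\R_+;H^{-1}_{loc})$ and, after inverting the elliptic operator $1-\tfrac{\mu}{3}\partial_x^2$ in the second equation, $\partial_t u_n$ bounded in $L^\infty_{loc}(\R_+;L^2_{loc})$. Aubin--Lions produces a subsequence with $u_n\to u$ strongly in $C([0,T];L^2_{loc})$ and $\zeta_n\rightharpoonup \zeta$ weakly-$\ast$ in $L^\infty_{loc}(\R_+;L^2_{loc})$, where I use the embedding $\Lambda_{\sigma_0}\subset L^2_{loc}$. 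Passage to the limit in \eqref{weak} is immediate for the linear terms, while the two nonlinearities $(1+\eps\zeta_n-\beta b)u_n$ and $u_n^2$ are handled by weak$\times$strong and strong$\times$strong convergence respectively; weak-$\ast$ lower semicontinuity of $\mathcal E$ finally yields $(\zeta,u)\in L^\infty_{loc}(\R_+;\Lambda_{\sigma_0}\times H^1_\mu)$.

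The hard part is the entropy step: unlike the flat-bottom setting, $\mathcal E$ is no longer monotone, so one must (i) design the convex $\Sigma$ (equivalently $\sigma_0$) so that the bottom source is absorbable by Gronwall, (ii) check that the induced Orlicz class $\Lambda_{\sigma_0}$ is large enough to contain natural data while giving the $L^2_{loc}$ control of $\zeta$ needed for the weak--strong product of Step 3, and (iii) ensure that the non-cavitation condition $1+\eps\zeta-\beta b>0$ survives in the weak-$\ast$ limit, since otherwise the height $h$ loses its physical meaning and the first equation of \eqref{BPW} degenerates.
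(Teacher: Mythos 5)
Your overall strategy (mollify the data, invoke the global regular solutions, derive a uniform entropy bound, pass to the limit by compactness) is the paper's strategy, but two of your key steps contain genuine errors that would break the argument.

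First, the claimed embedding $\Lambda_{\sigma_0}\subset L^2_{loc}$ is false, and your compactness step for $\zeta_n$ collapses with it. Since $\sigma_0(1+x)\sim x\ln x$ as $x\to+\infty$, the Orlicz class only gives an $L\log L$--type control on the positive part of $\zeta$ (e.g.\ $\zeta(x)=x^{-2/3}$ near the origin lies in the class but not in $L^2_{loc}$). Consequently the entropy bound does \emph{not} give a uniform $L^2_{loc}$ bound on $\zeta_n$, and there is no weak-$\ast$ $L^2_{loc}$ limit to extract. The correct route, which the paper follows, is to deduce \emph{uniform integrability} of $(\zeta_n)$ on compacts from the entropy bound (splitting according to whether $\eps\zeta_n-\beta b_n$ is smaller or larger than a threshold $M$, and using $\sigma_0(1+x)\gtrsim x^2$ on bounded sets and $\sigma_0(1+x)\gtrsim x\ln x\ge x$ for large $x$), and then apply Dunford--Pettis to get a subsequence converging \emph{weakly in $L^1$} on compacts. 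This weaker convergence of $\zeta_n$ in turn forces you to upgrade the convergence of $u_n$: to pass to the limit in $\int\zeta_n u_n\varphi_x$ you need $u_n\to u$ uniformly on compacts, not merely in $C([0,T];L^2_{loc})$. The paper obtains this from the uniform $H^1_\mu$ bound on $u_n$, an $L^\infty_t L^2_x$ bound on $\partial_t u_n$ (again using the Orlicz dichotomy to control $K_\mu\ast\zeta_n$ in $L^2$), the compact embedding $H^1(K)\hookrightarrow C(K)$ and Aubin--Simon.

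Second, the entropy functional you wrote down is internally inconsistent. With the kinetic term $\tfrac12(1+\eps\zeta-\beta b)u^2=\tfrac12 hu^2$, the compatibility condition $\nabla\eta\,\nabla f=\nabla q$ for the hyperbolic system $h_t+(\eps hu)_x=0$, $u_t+(\tfrac1\eps h+\tfrac\eps2 u^2)_x=0$ forces $\Sigma''=\eps^{-2}$, i.e.\ $\Sigma$ quadratic --- which would collapse the Orlicz class to $L^2$ and exclude the data allowed in the statement. Moreover, the term $\tfrac{\mu}{6}\int u_x^2$ arises from testing $-\tfrac{\mu}{3}u_{xxt}$ against $u$, whereas the kinetic term $\tfrac12 hu^2$ would require testing against $hu$, producing uncontrollable commutators with $h_x,h_t$ at this regularity. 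The correct choice, which dictates the class $\Lambda_{\sigma_0}$ appearing in the statement, is $\eta(h,u)=\tfrac{u^2}{2}+\tfrac{1}{\eps^2}\sigma_0(h)$ with $\sigma_0(h)=h\ln h-h+1$; the bottom then enters only through the source $-\tfrac{\beta}{\eps}b_xu$, which is absorbed by Gronwall exactly as you anticipate. Finally, note that the uniform bound on the initial entropies is obtained by also mollifying $b$ and using Jensen's inequality with the convexity of $\sigma_0$, a small but necessary point your reduction glosses over.
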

 For the definition of $\Lambda_{\sigma_0}$ see (\ref{Orlicz}).\\
For the system \eqref{BP} we also derived an intermediate  time existence result under the following hypothesis on the parameter $ \beta $. 
\begin{hypothesis}\label{hyp2}
$$
0<\beta\le C_0=C_0(\frac{1}{h_0},\|b\|_{W^{\frac{5}{2}+,\infty}})
$$
where $ C_0 $ is the constant that appears in Lemma \ref{proprim'}.
\end{hypothesis}

This condition ensures that the algebraic inverse of the operator  $$\mfT_b=h_b(1 +\mu \toh)$$  is Lipschitz from $H^\theta(\R)$ to $ H^{\theta+1}_\mu(\R) $ for  $ -3/2<\theta\le 0$.

\begin{theorem}\label{theo3}
Let $ \mu>0 $, $ \varepsilon>0 $, $ \beta>0 $, \; $s>1/2 $, $ b\in  W^{s+2,\infty}(\R) $. Assume that  Hypothesis \ref{hyp2} is  satisfied and that  there exists 
 $ h_0 >0 $ such that 
\begin{equation}\label{hb}
 h_b:=1-\beta b \ge h_0 >0 \qquad {\rm on  } \; \R \; .
\end{equation}
Then  the Cauchy problem associated with \eqref{BP} is unconditionally locally well-posed in $H^s(\R) \times H^{s+1}_\mu(\R) $. 

Moreover, the maximal time of exitence $ T^*$ of the solution satisfies 
$$
T^* \gtrsim  T_0\sim (\epsilon\vee \beta)^{-1} \Bigl(1+\mu^{-1/2}  ( |\zeta_0|_{H^{\frac{1}{2}+}}+
|u_0|_{H^{\frac{3}{2}+}_\mu})\Bigr)^{-1}
$$
 and  
 $$
  \sup_{t\in [0,T_0]}| \zeta(t)|_{H^s}^2 + 
  | u(t)|_{H^{s+1}_\mu}^2  \lesssim 
  |\zeta_0|_{H^s}^2+
|u_0|_{H^{s+1}_\mu}^2
 $$
\end{theorem}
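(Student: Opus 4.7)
My plan is to follow the classical energy plus compactness scheme, adapted to the low regularity threshold $s>1/2$ and to the variable-coefficient dispersive operator $\toh$. I would first produce approximate smooth solutions, either by mollifying the initial data and invoking the existence result of \cite{DIs} at high regularity, or by a Friedrichs truncation (the truncated system is an ODE on a finite-dimensional subspace of $L^2\times L^2$, locally Lipschitz thanks to Hypothesis \ref{hyp2} and Lemma \ref{proprim'}). The bulk of the work is then an a priori estimate in $H^s\times H^{s+1}_\mu$ uniform in the regularization parameter, from which existence follows by Aubin-Lions compactness, while a difference estimate at a lower regularity level together with a Bona-Smith argument yield unconditional uniqueness and continuous dependence of the flow.

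The heart of the proof is the following energy estimate. I would use the symmetric positive bilinear form $Q(u,v)=\int_{\R}h_b(1+\mu\toh)u\cdot v\,dx$, whose symmetry and coercivity come from integration by parts in the formula \eqref{deftoh}, and define
\[
\mathcal E^s(t)\;=\;|\zeta(t)|_{H^s}^2+Q(J^s u(t),J^s u(t)),\qquad J^s:=(1-\partial_x^2)^{s/2},
\]
which is equivalent, uniformly in the parameters, to $|\zeta|_{H^s}^2+|u|_{H^{s+1}_\mu}^2$. I would apply $J^s$ to each equation of \eqref{BP}, pair the first with $J^s\zeta$ and the second with $h_b J^s u$, and add. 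The linear ``skew'' pair built from $u_x$ in the first equation and $\zeta_x$ in the second cancels after integration by parts, up to a commutator $\int[J^s,h_b]u\cdot(J^s\zeta)_x$ of lower order. All remaining pieces then carry a prefactor $\eps$ (from $\eps u u_x$ and $\eps(\zeta u)_x$) or $\beta$ (from the $b$-dependent coefficients in $h$ and in $\toh$). Kato-Ponce commutator estimates, combined with the embedding $|u_x|_{L^\infty}\lesssim\mu^{-1/2}|u|_{H^{3/2+}_\mu}$, should produce
\[
\frac{d}{dt}\mathcal E^s(t)\;\lesssim\;(\eps\vee\beta)\,\Bigl(1+\mu^{-1/2}\bigl(|\zeta|_{H^{\frac{1}{2}+}}+|u|_{H^{\frac{3}{2}+}_\mu}\bigr)\Bigr)\,\mathcal E^s(t),
\]
whence Gr\"onwall delivers both the claimed lower bound on $T^\ast$ and the stated a priori bound.

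Once this estimate is established, passage to the limit is routine: time derivatives are controlled in a lower-regularity space directly through the equations and the boundedness of $\mfT_b^{-1}:H^\theta\to H^{\theta+1}_\mu$ supplied by Lemma \ref{proprim'}. For unconditional uniqueness, I would run an analogous energy argument on the difference of two solutions at the level $L^2\times H^1_\mu$, which closes because $s>1/2$ provides enough $L^\infty$-control on the coefficients appearing in the nonlinearities. Continuous dependence of the flow then follows from the standard Bona-Smith regularization argument.

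The main obstacle I anticipate is the commutator analysis involving $\toh$: this operator splits into three pieces with various powers of $h_b$, $b_x$ and $\beta$, and to reach the sharp threshold $s>1/2$ one must not waste derivatives on $b$. Particularly delicate is $[J^s,1+\mu\toh]u_t$, where $u_t$ must be re-expressed through the equation and handled using the smoothing property of $\mfT_b^{-1}$ so as to recover the correct powers of $\mu$. Decomposing $\toh$ into its principal part $-\tfrac{1}{3h_b}(h_b^3\partial_x\cdot)_x$ and lower-order remainders, and invoking sharp Kato-Ponce inequalities under $b\in W^{s+2,\infty}$, should suffice; this is the point where Lemma \ref{proprim'} plays a structural role by simultaneously providing the symmetrizer that makes $Q$ positive and the tame inverse $\mfT_b^{-1}$.
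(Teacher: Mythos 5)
Your overall architecture matches the paper's: regularize, symmetrize the second equation with $h_b$ so that the quadratic form built on $\mfT_b=h_b(1+\mu\toh)$ is positive (Lemma \ref{proprim'}), cancel the linear skew pair, observe that every remaining term carries a factor $\eps$ or $\beta$, close a difference estimate at lower regularity, and finish with Bona--Smith. Two concrete points, however, would break down at the low end of the range $s>1/2$, which is precisely what the theorem claims.

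First, the claim that ``Kato--Ponce commutator estimates, combined with $|u_x|_{L^\infty}\lesssim\mu^{-1/2}|u|_{H^{3/2+}_\mu}$'' close the $H^s$ estimate is not correct for $1/2<s\le 3/2$. The classical commutator bound $|[J^s,u]\zeta_x|_{L^2}\lesssim |u_x|_{L^\infty}|\zeta|_{H^s}+|u|_{H^s}|\zeta_x|_{L^\infty}$ produces the factor $|\zeta_x|_{L^\infty}$, which is not controlled by $|\zeta|_{H^s}$ unless $s>3/2$; your embedding only repairs the analogous loss on $u$, not on $\zeta$. The paper's resolution is a paraproduct splitting $u=P_{\ll N}u+P_{\gtrsim N}u$ (Propositions \ref{propcom} and \ref{propproduit}): the low-frequency part yields a commutator bounded by $|u_x|_{L^\infty}|\zeta|_{H^s}$, while in the high-frequency part the derivative is moved from $\zeta$ onto $u$, giving $|u|_{H^{s+1}}|\zeta|_{L^\infty}\lesssim\mu^{-1/2}|u|_{H^{s+1}_\mu}|\zeta|_{L^\infty}$. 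This trade is what produces the $\mu^{-1/2}$ in $T_0$; your final differential inequality has the right shape, but the mechanism that justifies it at $s>1/2$ is missing. Second, the difference estimate at the level $L^2\times H^1_\mu$ does not close for $1/2<s<1$: the term $\eps(v\zeta_2)_x$ in the equation for $\eta=\zeta_1-\zeta_2$ contains $v\,\zeta_{2,x}$, which lies only in $H^{s-1}\not\subset L^2$ since $\zeta_2\in H^s$ with $s<1$, and integrating by parts instead requires $\eta_x\in L^2$, which is equally unavailable. The paper runs the contraction at the level $H^{s-1}\times H^{s}_\mu$, where the algebra property of $H^s$ gives $|v\zeta_2|_{H^s}\lesssim|v|_{H^s}|\zeta_2|_{H^s}$ and the derivative lands harmlessly in $H^{s-1}$; your argument works only for $s\ge 1$. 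The choice of regularization (Bona--Smith parabolic term $-\nu\zeta_{txx}$ in the paper versus your Friedrichs truncation) is a matter of taste and either would serve.
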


\begin{remark}
We do not know if we can reach the long time existence (existence time of order $0(\frac{1}{\eps\vee \beta}) $ for \eqref{BP}. 
The loss of the factor $\mu^{1/2} $ with respect to this long time existence result seems to follow from a loss of symmetry in the formulation of \eqref{BP} with respect to the 1D Green-Naghdi system. Indeed to inverse the operator $1+\mu\toh u $ we need to multiply \eqref{BP}$_2$ by $ h_b$ whereas a multiplication by $ h $ would be needed to obtain a cancellation with $ (h u)_x $ that appears in \eqref{BP}$_1$. However this difficulty can be dropped if we consider an approximate consistent version of system (\ref{BP})  up to $O(\epsilon\mu)$ (which is of order $O(\mu^2))$ by  multiplying the second equation of (\ref{BP}) by $h$ and writing 
$$ \mu h\toh u= \mu h_b \toh u + O(\epsilon\mu).$$ 
System (\ref{BP}) becomes 
\begin{equation}\label{BPModified}
\left\{
\begin{array}{lcl}
\zeta_t+(hu)_x &=&0,\\
(h+\mu h_b\toh)u_t +h\zeta_x+\eps h uu_x&=&0.
\end{array} \right.
\nonumber
\end{equation}
For this system theorem \ref{3}, at least in a regular functional setting (say $s>\frac{3}{2}$), still valid with long time existence, i.e. with time $T^*$ such that 
$$ T^* \gtrsim  T_0\sim (\epsilon\vee \beta)^{-1}   \Bigl( |\zeta_0|_{H^{\frac{3}{2}+}}+
|u_0|_{H^{\frac{5}{2}+}_\mu}\Bigr)^{-1}\; .$$
\end{remark}

 \begin{remark}
     In the case of weak bottom variation hypothesis the results of theorem \ref{3} still valid for system \eqref{BPW}  without Hypothesis \ref{hyp2} and  in a longer time $$ T^* \gtrsim  T_0\sim (\epsilon\vee \beta)^{-1}   \Bigl( |\zeta_0|_{H^{\frac{1}{2}+}}+
|u_0|_{H^{\frac{3}{2}+}_\mu}\Bigr)^{-1}\; . $$
However, one has to replace \eqref{hb} by a similar hypothesis on $ h$ at time $0 $, i.e. $h(0,\cdot)\ge h_0>0 $ on $\R$. Indeed, under this hypothesis, one can multiply \eqref{BPW}$_2$ by $ h $ to obtain the desired cancellation with the term $ (h u)_x $ that appears in \eqref{BPW}$_1$.
\end{remark}
 The rest of the paper is organized as follows:  The second section is dedicated to the notations and preliminary results that will be useful throughout the paper. In the third section we prove the local well-posedness of \eqref{BP} in $ H^s\times H^{s+1}_\mu $, $s>1/2 $, under a smallness assumption on $ \beta $. 
In Section 4, we explain how a local existence result for \eqref{BPW} can be easily obtained by following the same steps as in Section 3 with obvious simplifications and we show that the local solution constructed   satisfies an  entropic energy estimate. Using this estimate and  the results established previously, we  prove our main theorem \ref{maintheorem}.  Finally in Section  5 we recover the result of M. E. Schonbek concerning the existence of a weak entropic solution for the Boussinesq system and this using a sequence of regular solutions given by our main theorem. 

 \section{Notations and preliminary}
 \subsection{Notations and function spaces.}
 In the following, $C$ denotes any non negative constant whose exact expression is of no importance. The notation $a\lesssim b$ means that $a\leq C_0 b$.\\
 We denote by $C(\lambda_1, \lambda_2,\dots)$ a non negative constant depending on the parameters
 $\lambda_1$, $\lambda_2$,\dots and whose dependence on the $\lambda_j$ is always assumed to be nondecreasing.\\
 Let $p$ be any constant with $1\leq p< \infty$ and denote $L^p=L^p(\R)$ the space of all Lebesgue-measurable functions
 $f$ with the standard norm $$\vert f \vert_{L^p}=\big(\int_{\R}\vert f(x)\vert^p dx\big)^{1/p}<\infty.$$ When $p=2$,
 we denote the norm $\vert\cdot\vert_{L^2}$ simply by $\vert\cdot\vert_2$. The real inner product of any functions $f_1$
 and $f_2$ in the Hilbert space $L^2(\R)$ is denoted by
\[(f_1,f_2)=\int_{\R}f_1(x)f_2(x) dx.
 \]
 The space $L^\infty=L^\infty(\R)$ consists of all essentially bounded, Lebesgue-measurable functions
 $f$ with the norm
\[
 \vert f\vert_{\infty}= \hbox{ess}\sup \vert f(x)\vert<\infty.
\]
For convenience, we denote the norm of $L^\infty(\R_+^*\times \R)$ by $\|\cdot\|_{L^\infty_{t,x}}.$\\
For any real constant $s\geq0$, $H^s=H^s(\R)$ denotes the Sobolev space of all tempered
 distributions $f$ with the norm $\vert f\vert_{H^s}=\vert \Lambda^s f\vert_2 < \infty$, where $\Lambda$
 is the pseudo-differential operator $\Lambda=(1-\partial_x^2)^{1/2}$.\\
\begin{definition}
For all $ s\in\R $, $ \mu>0 $ and $ f\in H^{s+1}(\R) $ we set 
$$
|f|_{H^{s+1}_\mu} ^2:=|f|_{H^s}^2+\mu |f_x |_{H^s}^2 \; .
$$ 
\end{definition}
For $ s>0 $,  we denote by $W^{s,\infty}=W^{s,\infty}(\R)=\{f, \Lambda^s f\in L^{\infty}\}$ endowed with its canonical norm. \\
 For any functions $u=u(t,x)$ and $v(t,x)$ 
 defined on $ [0,T)\times \R$ with $T>0$, we denote the inner product, the $L^p$-norm and especially
 the $L^2$-norm, as well as the Sobolev norm,
 with respect to the spatial variable $x$, by $(u,v)=(u(t,\cdot),v(t,\cdot))$, $\vert u \vert_{L^p}=\vert u(t,\cdot)\vert_{L^p}$,
 $\vert u \vert_{L^2}=\vert u(t,\cdot)\vert_{L^2}$ , and $ \vert u \vert_{H^s}=\vert u(t,\cdot)\vert_{H^s}$, respectively.\\
For $(X,\|\cdot\|_X)$ a Banach space, we denote as usually $L^p(]0,T[;X)$, $1\leq p\leq +\infty $,  the space of mesurable functions equipped by the norm:
 \[\big\Vert u\big\Vert_{L^p_TX} \ =\left( \displaystyle \int_0^T\big\Vert u(t,\cdot)\big\Vert_{X}^p \right) ^{1/p}  \hbox{ for }1\leq p<+\infty,\]
and
 \[\big\Vert u\big\Vert_{L^\infty_T X} \ = \ \hbox{ess}\sup_{t\in]0,T[}\Vert u(t,\cdot)\Vert_{X}\;\; \hbox{ for } p=+\infty  \ .\]
Finally, $C^k([0,T]; X)$  is the space of  $k$-times continuously differentiable functions from $[0,T]$ with value in $X,$ equipped with its standard norm \[\big\Vert u\big\Vert_{C^k([0,T];X)} \ = \ \max_{0\leq l\leq k} \sup_{t\in[0,T]}\vert u^{(l)}(t,\cdot)\vert_{X} \ .\] 
 Let $C^k(\R)$ denote the space of
 $k$-times continuously differentiable functions.  \\
 For any closed operator $T$ defined on a Banach space $X$ of functions, the commutator $[T,f]$ is defined
  by $[T,f]g=T(fg)-fT(g)$ with $f$, $g$ and $fg$ belonging to the domain of $T$. 
  
Throughout the paper, we fix a smooth even bump function $\eta$ such that
\begin{equation}\label{defeta}
\eta \in C_0^{\infty}(\mathbb R), \quad 0 \le \eta \le 1, \quad
\eta_{|_{[-1,1]}}=1 \quad \mbox{and} \quad  \mbox{supp}(\eta)
\subset [-2,2].
\end{equation}
We set  $ \phi(\xi):=\eta(\xi)-\eta(2\xi) $. For $l \in \mathbb \N\setminus\{0\}$, we define
\begin{displaymath}
\phi_{2^l}(\xi):=\phi(2^{-l}\xi). 
\end{displaymath}
By convention, we also denote
\begin{displaymath}
\phi_{1}(\xi):=\eta(\xi).
\end{displaymath}
Any summations over capitalized variables such as $N, $, $K$  are presumed to be dyadic. {Unless stated otherwise, we work with non-homogeneous decompositions for space, time and  modulation variables, i.e. these variables range over numbers of the form  $\{2^k : k\in\mathbb N\}$ respectively.}  Then, we have that
{
\begin{displaymath}
\sum_{N\ge 1}\phi_N(\xi)=1\quad \forall \xi\in \R, \quad \mbox{supp} \, (\phi_N) \subset
\{\frac{N}{2}\le |\xi| \le 2N\}, \ N \in \{2^k : k\in \N\setminus\{0\}\},
\end{displaymath}
}
Let us now define the following Littlewood-Paley multipliers : 
$$
P_Nu:=\mathcal{F}^{-1}_x\big(\phi_N\mathcal{F}_xu\big), \quad \widetilde{P}_N:= \hspace*{-5mm}\sum_{ N/4 \le K \le 4N} \hspace*{-5mm} P_K,
  \quad P_{\gtrsim  N}:=\sum_{ K \gtrsim  N}  \quad \text{and} \quad   \quad P_{\ll N} :=\sum_{1\le K \ll N} P_{K}, 
 $$
\subsection{Some preliminary estimates.} The following product and commutator estimates will be used intensively throughout the paper. 
\begin{proposition}\label{propcom} Let  $ N\gg 1 $ then 
\begin{equation}\label{cm1} |[P_N,P_{\ll N} f]g_x|_{L^2}  \lesssim |f_x|_{L^\infty}  |\tilde{P}_N g|_{L^2},
\end{equation}
\end{proposition}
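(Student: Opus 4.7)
The plan is to exploit the fact that $P_N$ acts by convolution with a Schwartz kernel $K_N(x) = N\,(\mathcal{F}^{-1}\phi)(Nx)$, so that the commutator with the multiplication operator $P_{\ll N}f$ becomes an integral operator whose weight involves the finite difference $P_{\ll N}f(y) - P_{\ll N}f(x)$; this finite difference is $O(|y-x|\,|f_x|_{L^\infty})$ by Taylor's formula combined with the $L^\infty$-boundedness of $P_{\ll N}$.

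First I would localize in frequency. Writing $g_x = \sum_K (P_K g)_x$, the Fourier support of $(P_{\ll N}f)(P_K g)_x$ is disjoint from $\{|\xi|\sim N\}$ as soon as $K\notin[N/4,4N]$, so $P_N$ kills it; and in the second term of the commutator one may insert $P_N = P_N\tilde P_N$. Setting $v := \tilde P_N g$, this yields
$$
[P_N,P_{\ll N}f]\,g_x = [P_N,P_{\ll N}f]\,v_x,
$$
and it suffices to bound the right-hand side by $|f_x|_{L^\infty}|v|_{L^2}$.

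Next I would write the commutator as
$$
[P_N,P_{\ll N}f]\,v_x(x) = \int_{\R} K_N(x-y)\bigl(P_{\ll N}f(y)-P_{\ll N}f(x)\bigr)v_x(y)\,dy
$$
and integrate by parts in $y$ to move the derivative off $v$. This produces two integrals: one with $K_N'(x-y)$ times the finite difference, and one with $K_N(x-y)$ times $(P_{\ll N}f)_x(y)=P_{\ll N}f_x(y)$. Taylor's formula and the boundedness of $P_{\ll N}$ on $L^\infty$ bound the two $f$-factors pointwise by $|y-x|\,|f_x|_{L^\infty}$ and $|f_x|_{L^\infty}$ respectively. Young's convolution inequality then controls the $L^2$ norm of the commutator by
$$
\bigl(\bigl\||\cdot|\,K_N'\bigr\|_{L^1(\R)} + \|K_N\|_{L^1(\R)}\bigr)\,|f_x|_{L^\infty}\,|v|_{L^2},
$$
and by the scaling $K_N(x)=N\,(\mathcal{F}^{-1}\phi)(Nx)$ both $L^1$ norms are finite constants depending only on $\phi$, yielding the stated bound.

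The only step requiring any care is the Fourier localization: one must fix the threshold hidden in the notation ``$\ll N$'' to be small enough (say at most $N/8$, which is fully compatible with the definition $\tilde P_N=\sum_{N/4\le K\le 4N}P_K$) so that $\tilde P_N$ is wide enough to capture every frequency of $g$ that the commutator sees. This is automatic under the hypothesis $N\gg 1$ and is the natural convention for such paraproduct-type estimates; the rest of the argument is a routine kernel computation.
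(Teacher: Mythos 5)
Your proof is correct and follows essentially the same route as the paper's: both reduce the commutator to $[P_N,P_{\ll N}f]\,\tilde{P}_N g_x$ by frequency localization, write it as an integral against the kernel of $P_N$ weighted by the finite difference $P_{\ll N}f(y)-P_{\ll N}f(x)$, bound that difference by $|x-y|\,|f_x|_{L^\infty}$ via the mean value theorem, and conclude with Young's convolution inequality. The only (immaterial) variation is that you integrate by parts in $y$ to land directly on $\tilde{P}_N g$, whereas the paper keeps $\tilde{P}_N g_x$, gains a factor $N^{-1}$ from the $L^1$ norm of $|\cdot|\,K_N$, and recovers $|\tilde{P}_N g|_{L^2}$ by Bernstein's inequality.
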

We give a short proof of \eqref{cm1} in the appendix for the sake of completeness.

We will also need the two following product estimates in Sobolev spaces :
\begin{enumerate}
\item 
  For every $  p,r,t $ such that $ r+p>t+1/2>0 $ and $r,p\ge t  $,
\begin{equation}
|fg|_{H^t} \lesssim |f |_{H^p} |g|_{H^r}
\; . \label{prod2}
\end{equation}
\item For any $ s\ge 0 $ 
\begin{equation}
|fg|_{H^s} \lesssim |f |_{L^\infty} |g|_{H^s}+  |f|_{H^s}|g |_{L^\infty}
\; . \label{prod3}
\end{equation}
\end{enumerate}
Inequality (\ref{prod2}) is a standart Sobolev product estimate, the second one (\ref{prod3}) is the well known Moser product estimate (see for instance \cite{Taylor91} or  \cite{Lanlivre13}, and references therein.)
With \eqref{prod2}-\eqref{prod3}  in hand, it is straightforward (see Appendix) to prove the two following  frequency localized product estimates  given in Proposition (\ref{propproduit}) that we will extensively use in the next section.
\begin{proposition}\label{propproduit}
For any $ N\gg1  $ and $ s>0 $  it holds 
\begin{equation}
N^s |P_N ( P_{\gtrsim N} f \, g_x)|_{L^2} \lesssim \delta_N \min \Bigl( |f|_{H^{s+1}} |g|_{L^\infty}, |f|_{H^s} |g_x|_{L^\infty}\Bigr) \label{proN1}
\end{equation}
 whereas for $ s>1/2 $ it holds 
\begin{equation}
N^{s-1} |P_N ( P_{\gtrsim N} f g_x)|_{L^2} \lesssim \delta_N |f|_{H^{s+1}} |g|_{H^{s-1}}\label{proN2}
\end{equation}
with $ |(\delta_{2^j})_{j\in \N}|_{l^2}\le 1 $. 
\end{proposition}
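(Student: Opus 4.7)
My plan is to prove both estimates by Littlewood--Paley decomposition, splitting $P_{\gtrsim N}f=\sum_{M\gtrsim N}P_M f$ and analysing each dyadic interaction between $P_M f$ and $g_x$. The sequences $\delta_N$ will emerge automatically from the $\ell^2$-summability of the $N^s$- (respectively $N^{s-1}$-) weighted left-hand sides, since that is exactly what the conclusion asks for.

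For the second form of \eqref{proN1}, H\"older gives directly $|P_N(P_{\gtrsim N}f\cdot g_x)|_{L^2}\le|P_{\gtrsim N}f|_{L^2}|g_x|_{L^\infty}$. Orthogonality yields $|P_{\gtrsim N}f|_{L^2}^2=\sum_{M\gtrsim N}|P_M f|_{L^2}^2$, and a Fubini swap produces
$$\sum_N N^{2s}|P_{\gtrsim N}f|_{L^2}^2=\sum_M|P_M f|_{L^2}^2\sum_{N\lesssim M}N^{2s}\lesssim\sum_M M^{2s}|P_M f|_{L^2}^2\sim|f|_{H^s}^2,$$
so that $\delta_N:=N^s|P_N(P_{\gtrsim N}f\cdot g_x)|_{L^2}/(|f|_{H^s}|g_x|_{L^\infty})$ satisfies $|(\delta_N)|_{\ell^2}\lesssim 1$. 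For the first form, I integrate by parts in $x$ to write $P_{\gtrsim N}f\cdot g_x=\partial_x(P_{\gtrsim N}f\cdot g)-(\partial_x P_{\gtrsim N}f)\cdot g$. In the first piece, the factor $N$ coming from $P_N\partial_x$ is absorbed by the sharper decay $|P_{\gtrsim N}f|_{L^2}\lesssim N^{-(s+1)}|f|_{H^{s+1}}$ (same Fubini argument with weight $M^{-(s+1)}$); in the second piece the identity $\sum_N N^{2s}|\partial_x P_{\gtrsim N}f|_{L^2}^2=\sum_M M^2|P_M f|_{L^2}^2\sum_{N\lesssim M}N^{2s}\lesssim|f|_{H^{s+1}}^2$ provides the desired $\ell^2$ bound. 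The $\min$ on the right-hand side of \eqref{proN1} is then obtained by taking the pointwise maximum of the two $\delta_N$ sequences so produced; this at most doubles the $\ell^2$ norm.

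For \eqref{proN2} I refine the decomposition according to the dyadic frequency $K$ of $g_x$. By the spectral support rule, only two types of frequency interactions contribute to $P_N(P_{\gtrsim N}f\cdot g_x)$: the diagonal-plus-low piece $P_N(P_{\sim N}f\cdot P_{\lesssim N}g_x)$, and the high-high-to-low piece $\sum_{M\gg N}P_N(P_M f\cdot\widetilde P_M g_x)$. For the first, I apply $L^\infty\!\times\!L^2$ H\"older together with Bernstein $|P_{\sim N}f|_{L^\infty}\lesssim N^{1/2}|P_{\sim N}f|_{L^2}$ and bound $|P_{\lesssim N}g_x|_{L^2}^2=\sum_{K\lesssim N}K^{4-2s}(K^{s-1}|P_K g|_{L^2})^2\lesssim N^{4-2s}|g|_{H^{s-1}}^2$ for $1/2<s<2$, with a convergent tail for $s\ge 2$. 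For the second, I use Bernstein at frequency $N$, $|P_N u|_{L^2}\lesssim N^{1/2}|u|_{L^1}$, and $L^2\!\times\!L^2$ H\"older to get a dyadic bound of the form $N^{1/2}M^{1-2s}\cdot(M^{s+1}|P_M f|_{L^2})\cdot(M^{s-1}|\widetilde P_M g|_{L^2})$; the series $\sum_{M\gg N}M^{1-2s}$ converges precisely because $s>1/2$, and a Schur test on the matrix $A_{NM}:=N^{s-1/2}M^{1-2s}\mathbf{1}_{M>N}$ (whose row- and column-sums are both controlled thanks to $s>1/2$) yields $\sum_N N^{2(s-1)}|P_N(P_{\gtrsim N}f\cdot g_x)|_{L^2}^2\lesssim|f|_{H^{s+1}}^2|g|_{H^{s-1}}^2$, from which $\delta_N$ is read off.

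The main obstacle is the bookkeeping for \eqref{proN2}: one has to verify that every dyadic exponent balances so that the single hypothesis $s>1/2$ simultaneously controls the $K$-sum (diagonal-plus-low) and the $M$-sum (high-high-to-low), and that the Schur test closes without spurious logarithms. The threshold $s=1/2$ is sharp, being exactly the convergence threshold of the geometric series $\sum M^{1-2s}$ driving the high-high-to-low interaction.
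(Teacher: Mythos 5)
Your proof is correct and follows essentially the same route as the paper's: H\"older plus dyadic summation for the $|f|_{H^s}|g_x|_{L^\infty}$ half of \eqref{proN1}, the integration-by-parts splitting $P_{\gtrsim N}f\, g_x=\partial_x(P_{\gtrsim N}f\, g)-(\partial_x P_{\gtrsim N}f)\, g$ for the $|f|_{H^{s+1}}|g|_{L^\infty}$ half, and for \eqref{proN2} the same diagonal/high-high paraproduct decomposition with the Bernstein $L^1\to L^2$ step at frequency $N$. The only cosmetic differences are that you close the high-high sum with a Schur test where the paper simply bounds $M^{s+1}|P_Mf|_{L^2}$ and $M^{s-1}|\widetilde{P}_Mg|_{L^2}$ by the full Sobolev norms and sums the geometric series $\sum_{M\gtrsim N}M^{1-2s}$, and that you treat all $s>1/2$ uniformly whereas the paper disposes of $s>3/2$ by the embedding $H^{s-1}\hookrightarrow L^\infty$ together with \eqref{proN1}.
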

We will also need the following product and commutator estimates to control the multiplication by a smooth bounded function (see the proof in the Appendix).
\begin{proposition}\label{propproduit2}
 Let $ \theta\in \R $, $ g\in H^\theta $ and $ f \in W^{|\theta|+1+,\infty} $. Then it holds  
\begin{equation}
|fg|_{H^\theta} \lesssim |f |_{W^{|\theta|+,\infty}} |g |_{H^\theta}
\;  \label{prod5}
\end{equation}
 and for any $ N\ge 1$ 
\begin{equation}
N^\theta  |[P_N, f ] g_x |_{L^2} \lesssim   \delta_N |f_x |_{W^{|\theta|+,\infty}} |g |_{H^\theta}
\; . \label{prod6}
\end{equation}
with $ |(\delta_{2^j})_{j\in \N}|_{l^2}\le 1 $. 
\end{proposition}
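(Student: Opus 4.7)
The plan is to establish \eqref{prod5} first and then deduce \eqref{prod6} by a paradyadic decomposition of $f$ combined with the basic commutator bound \eqref{cm1}.

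For \eqref{prod5} with $\theta\ge 0$, I would write $\Lambda^\theta(fg)=f\,\Lambda^\theta g+[\Lambda^\theta,f]g$: the first summand is trivially bounded in $L^2$ by $|f|_{L^\infty}|g|_{H^\theta}$, and a Kato--Ponce type commutator inequality of the form $|[\Lambda^\theta,f]g|_{L^2}\lesssim |\nabla f|_{L^\infty}|g|_{H^{\theta-1}}+|\Lambda^\theta f|_{L^\infty}|g|_{L^2}$ absorbs the rest into $|f|_{W^{\theta+,\infty}}|g|_{H^\theta}$. For $\theta<0$ I would use duality: $|fg|_{H^\theta}=\sup_{|\phi|_{H^{-\theta}}\le 1}|(g,f\phi)|\le |g|_{H^\theta}\sup|f\phi|_{H^{-\theta}}$, reducing matters to the case $-\theta>0$ just treated.

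For \eqref{prod6} the starting point is the paradyadic splitting $f=P_{\ll N}f+P_{\gtrsim N}f$, giving
\[
[P_N,f]g_x=[P_N,P_{\ll N}f]g_x+P_N\bigl(P_{\gtrsim N}f\cdot g_x\bigr)-P_{\gtrsim N}f\cdot P_Ng_x\;.
\]
The first piece falls exactly under \eqref{cm1} and is bounded by $|f_x|_{L^\infty}|\widetilde P_Ng|_{L^2}$; since $\bigl(N^\theta|\widetilde P_Ng|_{L^2}\bigr)_N$ is $\ell^2$-summable with norm $\lesssim|g|_{H^\theta}$ for every $\theta\in\R$, this yields the required $\delta_N$-type bound. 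For the third piece I would use the Bernstein estimate $|P_{\gtrsim N}f|_{L^\infty}\lesssim N^{-1-|\theta|-\epsilon}|f_x|_{W^{|\theta|+\epsilon,\infty}}$ (available because of the excess regularity $f_x\in W^{|\theta|+1+,\infty}$), together with $|P_Ng_x|_{L^2}\lesssim N|P_Ng|_{L^2}$ and $N^\theta|P_Ng|_{L^2}=\delta_N|g|_{H^\theta}$, to produce an additional gain $N^{-\epsilon}$. The middle piece $P_N\bigl(P_{\gtrsim N}f\cdot g_x\bigr)$ I would further split via $g_x=P_{\ll N}g_x+P_{\gtrsim N}g_x$: in the low--high regime output-frequency localization forces the $f$-component to sit at dyadic scale $M\sim N$, reducing the analysis essentially to the third piece; in the high--high regime one has to sum diagonal contributions $\sum_{M\gtrsim N}|P_Mf|_{L^\infty}\cdot M\,|P_M g|_{L^2}$.

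The main obstacle is closing the high--high interaction. After inserting the Bernstein bound on $|P_Mf|_{L^\infty}$ and the identity $|P_Mg|_{L^2}=M^{-\theta}\delta_M|g|_{H^\theta}$, the claim reduces to the $\ell^2$-boundedness of the discrete convolution with dyadic kernel $K(N,M)=\mathbf{1}_{M\gtrsim N}\,N^\theta M^{-|\theta|-\theta-\epsilon}$, which I would verify by Schur's test; the strictly positive $\epsilon$ arising from the excess regularity $f\in W^{|\theta|+1+,\infty}$ is precisely what makes both Schur sums converge and avoids a logarithmic loss.
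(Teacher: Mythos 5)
Your proof of \eqref{prod6} is essentially the paper's: the same three-term splitting $[P_N,f]g_x=[P_N,P_{\ll N}f]g_x+P_N(P_{\gtrsim N}f\,g_x)-P_{\gtrsim N}f\,P_Ng_x$, with \eqref{cm1} handling the first piece and Bernstein absorbing the derivative on the high-frequency part of $f$; the paper simply bounds the third piece by $N|P_{\gtrsim N}f|_{L^\infty}\lesssim |f_x|_{L^\infty}$ and sums the middle piece directly (splitting the cases $\theta\ge 0$ and $\theta<0$), whereas you organize the high--high sum as a Schur test with an explicit $N^{-\epsilon}$ gain coming from the excess regularity; both bookkeepings close, and yours has the small advantage of treating the two signs of $\theta$ uniformly. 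For \eqref{prod5} you take a genuinely different route: the paper runs a Littlewood--Paley paraproduct decomposition of $fg$ (low--high bounded by $|f|_{L^\infty}|g|_{H^\theta}$, high-frequency $f$ handled by Bernstein, again with separate cases $\theta\ge0$ and $\theta<0$), while you use Kato--Ponce plus duality. Your argument is correct under the stated hypothesis, but note that the Kato--Ponce commutator bound carries the term $|\nabla f|_{L^\infty}|g|_{H^{\theta-1}}$, so for $0\le\theta<1$ (and, through your duality reduction, for $-1<\theta<0$) you only obtain the right-hand side $|f|_{W^{(|\theta|\vee 1)+,\infty}}|g|_{H^\theta}$ rather than the sharper constant $|f|_{W^{|\theta|+,\infty}}$ claimed in \eqref{prod5}; this is harmless here because the hypothesis already requires $f\in W^{|\theta|+1+,\infty}$ and none of the paper's applications exploit the sharper norm, but it is a point where your proof yields a slightly weaker statement than the paper's paraproduct argument. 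You should also say a word about $N\lesssim 1$ in \eqref{prod6}, where \eqref{cm1} does not apply and a direct bound $N^\theta|[P_N,f]g_x|_{L^2}\lesssim |f|_{W^{1,\infty}}|g|_{H^\theta}$ is used instead, as in the paper.
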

\section{Local existence  and energy estimates for \eqref{BP}}\label{section3}
In all this section we assume that $ b\in W^{s+2}(\R) $. This is not necessary and actually to prove the  LWP of \eqref{BP} in $ H^s(\R)\times H^{s+1}(\R) $, $ b\in C^{[s]+3}_b(\R) $ would be  enough.
\subsection{Local well-posedness and estimates for a Bona-Smith's approximation}\label{31}
We fix $ \epsilon, \mu, \beta >0 $ in (\ref{BP}). For $ \nu>0  $ we 
 consider the  Bona-Smith type regularization  problem associated to (\ref{BP})
\begin{eqnarray}\label{3l}
\left\{
\begin{array}{rcl}
\zeta_t- \nu \zeta_{txx} + (h_b u)_x +\epsilon (\zeta u)_x&=&0,\\
(1+\mu \toh) u_t+\zeta_x + \epsilon u u_x &=&0,\\
(\zeta,u)(0)&=&(\zeta_0,u_0)\; . 
\end{array} \right.
\end{eqnarray}
Since $ h_b>0 $ on $\R $, multiplying the second equation in \eqref{3l} by $ h_b $ and  setting $ V=(\zeta,u)  $, \eqref{3l} can be rewritten as 
\begin{equation}
\frac{d}{dt} V = \Omega_\nu (V) 
\end{equation}
where 
$$
 \Omega_\nu (V)=\Bigl( (1-\nu \partial_x^2)^{-1}[- ((1-\beta b) u)_x  -\epsilon (u \zeta)_x ],\;
\mfT^{-1}_b[ -h_b \partial_x (\zeta +\frac{\epsilon}{2}  u^2) ]\Bigr) 
$$
with $ \mfT_b=h_b(1+\mu \toh$ (see \eqref{deftoh}).
Recall that $ H^s(\R) $ is an algebra for $s>1/2$. 
Since according to \eqref{estT} in Lemma \ref{proprim'}, for $ s>1/2$, $ \mfT^{-1}_b $ is continuous from $ H^{s} (\R) $ to $ H^{s+1}(\R)$ and this is also the case for $ (1-\nu\partial_x^2)^{-1}$, it is straightforward to check that $ \Omega_\nu $ is a  locally Lipschitz  mapping from $ (H^{s+1}(\R))^2 $ into itself for $ s>1/2$. Therefore by the Cauchy-Lipschitz theorem  in Banach spaces we infer that 
\eqref{3l} is locally well-posed in $(H^{s+1}(\R))^2 $, i.e. for any $ (\zeta_0,u_0)\in  (H^{s+1}(\R))^2 $ there exists $ 
 T_{s,\mu,\nu}=T_{s,\mu,\nu}(|\zeta_0|_{H^{s+1}}+|u_0|_{H^{s+1}})$ and a unique solution $(\zeta,u) \in C^1([0,T_{s,\mu,\nu}]; (H^{s+1})^2)$. Moreover, for any $R>0 $,  the mapping that to $ (\zeta_0,u_0) $ associates $(\zeta,u) $ is continuous from $ ( B(0,R)_{H^{s+1}})^2\subset (H^{s+1})^2 $ into 
 $ C([0,T_{s,\mu,\nu}(R)]; (H^{s+1})^2)$.

We start  by stating some energy estimates fundamental to prove our result.
For $ s\ge 0 $ and $ \nu\ge 0  $ we define $ E^s_{\mu,\nu} \; :\; (H^{s+1}(\R))^2 \to \R $ by 
\begin{equation}\label{ener}
E^s_{\mu,\nu}(\zeta,u) =|\zeta|_{H^s}^2+\nu |\zeta_x|_{H^s}^2+|u|_{H^{s+1}_\mu}^2
\end{equation}
In the sequel we denote by $ (\delta_N)_{N\in 2^{\Z}} $  a generic  sequence of positive real numbers such that 
$$ \sum_{j\in \N} \delta_{2^j}^2 \le 1\ .$$ 

\subsubsection{$H^s$ estimate.} 
Applying the operator $P_N$, $ N\ge 1 $,   to the first equation in (\ref{3l}),  multiplying by $
N^{2s}P_N \zeta$ and   integrating with respect to $x$ the resulting equation, we get
\begin{align}
\displaystyle \frac{N^{2s} }{2}  \frac{d}{dt} (|P_N\zeta|_{L^2}^2+ \nu |P_N \zeta_x|^2_{L^2}) &= -\epsilon N^{2s} (P_N(\zeta u)_x ,P_N \zeta)_{L^2}  \nonumber\\
& \qquad -N^{2s} ( P_N( h_b u)_x ,P_N \zeta)_{L^2}  \; ,\label{E1s}
\end{align}  
where, by  integrating by parts, it holds  
 
 \begin{align}
  -(P_N(h_b u)_x,P_N  \zeta)_{L^2}& =  (P_N(h_b u),P_N  \zeta_x)_{L^2}  \nonumber\\
 &=(h_b P_N u, P_N \zeta_x)_{L^2} +([P_N, h_b] u , P_N \zeta_x)_{L^2} \nonumber \\
 &= (h_b P_N u, P_N \zeta_x)_{L^2}  - (\partial_x([P_N, h_b] u ), P_N \zeta)_{L^2}   \nonumber\\
 &= (h_b P_N u, P_N \zeta_x)_{L^2}  - ([P_N, \partial_x h_b] u , P_N \zeta)_{L^2}  \nonumber \\
 & \qquad -([P_N, h_b] u_x , P_N \zeta)_{L^2} \; .
 \label{coml}
 \end{align}
 Now we notice that 
\begin{equation}\label{selfadjoint}
\mfT_b v = -\frac{\mu}{3} (h_b^3 v_x)_x + \Bigl( h_b + \frac{\beta\mu}{2}\partial_x (h_b^2 b_x)+\beta^2 \mu  \, b_x \Bigr) v
\end{equation}
so that $  \mfT_b $ is  $ L^2_x $ self-adjoint and it holds 
\begin{equation}\label{selfPN}
P_N (\mfT_b v) =  \mfT_b P_N v - \frac{\mu}{3} \partial_x\Bigl([P_N, h_b^3 \partial_x] v\Bigr) + 
 [P_N, g_b] v
\end{equation}
where 
\begin{equation}\label{defgbeta}
g_b:= h_b + \frac{\beta\mu }{2}\partial_x (h_b^2 b_x)+\beta^2 \mu\,  b_x^2
\end{equation}
belongs to $ C^1_b(\R) $ (as soon as $b\in C^3_b(\R) $) and satisfies $ |g_b|_{L^\infty} \lesssim 1 $ and $ |\partial_x g_b|_{L^\infty_x} \lesssim \beta $.
 
Multiplying  the second  equation in (\ref{3l}) by $ h_b$ we get 
\begin{equation}\label{eqT}
\mfT_b u_t=- h_b \zeta_x - \epsilon h_b u u_x =-\partial_x(h_b \zeta+\frac{\epsilon}{2} h_b u^2) + \partial_x h_b \zeta + \frac{\epsilon}{2} \partial_x h_b u^2 \; .
\end{equation}
 According to \eqref{eqT} and Lemma \ref{proprim'} we thus  infer that for $ s>1/2 $, 
\begin{align}
| u_t|_{H^{s}_\mu} & \lesssim  |\partial_x(h_b \zeta+\frac{\epsilon}{2} h_b u^2)|_{H^{s-1}} + |\partial_x h_b \zeta + \frac{\epsilon}{2} \partial_x h_b u^2|_{H^{s-1}} \nonumber \\
&\lesssim  
 C(|b|_{W^{s+1+,\infty}})\, \Bigl(  |\zeta|_{H^s}+ \epsilon |u|_{L^\infty_x} |u|_{H^s} \Bigr)\; .\label{estut}
   \end{align}
Note that for $ 1/2<s<1 $ we  have $ s-1<0 $ and thus we already have to assume Hypothesis \ref{hyp2} here.
Applying the operator $ P_N $ to \eqref{eqT}, multiplying by $
N^{2s}P_N u$,  integrating with respect to $x$ the resulting equation and using \eqref{selfPN}, we get
\begin{align}
\displaystyle \frac{N^{2s} }{2}  \frac{d}{dt} (\mfT_b P_N u, P_N u)_{L^2_x} &= -N^{2s}  (h_b P_N \zeta_x, P_N u)_{L^2} -\epsilon N^{2s}  ( P_N ( h_b u u_x)  ,P_N u)_{L^2}\nonumber \\
& -N^{2s} \Bigl([P_N, h_b] \zeta_x - \frac{\mu}{3} \partial_x([P_N, h_b^3 \partial_x] u_t) \bigr. \nonumber\\ 
&\Bigl. \hspace{5cm}+ [P_N, g_b] u_t   ,P_N u\Bigr)_{L^2}.\label{E2s}
\end{align} 
Summing the identities \eqref{E1s} and \eqref{E2s} and noticing that the contribution of the first term in the right-hand side of \eqref{coml} vanishes with the contribution of the first term in the right-hand side of \eqref{E2s} we obtain 

\begin{align}
\displaystyle \frac{N^{2s} }{2}  \frac{d}{dt} \Bigl(|P_N \zeta|_{L^2}^2+ &\nu |P_N \zeta_x|^2_{L^2}+ (\mfT_b P_N u, P_N u)_{L^2_x} \Bigr)
=-\epsilon N^{2s}\Bigl(   P_N ( h_b u u_x) ,P_N u\Bigr)_{L^2}\nonumber \\
& -\epsilon N^{2s} (P_N(\zeta u)_x ,P_N \zeta)_{L^2} -N^{2s} ([P_N, \partial_x h_b] u , P_N \zeta)_{L^2}\nonumber \\
&  -N^{2s} ([P_N,  h_b] u_x , P_N \zeta)_{L^2}\nonumber \\
& - N^{2s} \Bigl([P_N, h_b ] \zeta_x - \frac{\mu}{3} \partial_x([P_N, h_b^3 \partial_x] u_t) + 
[P_N, g_b] u_t   ,P_N u\Bigr)_{L^2}
.\label{E3s}
\end{align} 
Let us estimate one by one the terms appearing in the right-hand side of \eqref{E3s}. 
 For $N\lesssim 1 $, \eqref{prod5} directly yields
 $$
N^{2s} |(P_N(h_b u u_x),P_N  u)_{L^2}|\lesssim |h_b (u^2)_x|_{H^{-1}} |u|_{L^2_x} \lesssim 
 C(|b|_{W^{1+,\infty}})\, |u|_{L^\infty_x} |u|_{L^2_x}^2 \; .
 $$
 Now for $ N\gg 1$, we first decompose this term as follows
$$
P_N(h_b u u_x) = P_N (P_{\ll N} (h_b u) \, u_x) + P_N (P_{\gtrsim N} (h_b u) u_x)= A_{1,N}+A_{2,N} \; 
$$
where on account of \eqref{cm1} and using integration by parts we get 
\begin{align*}
N^{2s} |(A_{1,N},P_N u)_{L^2_x}| & \lesssim \delta_N  |(\partial_x(h_b u) |_{L^\infty_x} |u|_{H^s}^2 \\
&\lesssim C(|b|_{W^{1,\infty}})\, \delta_N |u|_{H^s}^2   \Bigl(\beta  |u|_{L^\infty_x} +|u_x|_{L^\infty_x} \Bigr)   \;.
\end{align*}
On the other hand, we may rewrite $ A_{2,N} $ as 
$$
A_{2,N}=  P_N \Bigl( \sum_{N_1\gtrsim N} P_{N_1} (h_b u) P_{\lesssim N_1} u_x\Bigr)
$$
that leads, by using \eqref{prod5},  to 
\begin{align*}
N^s |A_{2,N}|_{L^2_x} &\lesssim \sum_{N_1\gtrsim N}
\Bigl(\frac{N}{N_1}\Bigr)^s \delta_{N_1}  |h_b u |_{H^s} |u_x |_{L^\infty_x}\\
&  \lesssim \delta_N |h_b u |_{H^s} |u_x |_{L^\infty_x} \\
& \lesssim C(|b|_{W^{s+,\infty}})\, \delta_N  |u|_{H^s} |u_x |_{L^\infty_x} \; .
\end{align*}
Therefore we eventually get for $ N\ge 1$, 
$$
\epsilon N^{2s}|(P_N(h_b u u_x),P_N  u)_{L^2}|\lesssim C(|b|_{W^{s+1,\infty}})\, \epsilon\, \delta_N^2 (|u|_{L^\infty_x}+|u_x|_{L^\infty_x}) |u|_{H^s}^2 \; .
$$
At this point we notice that Sobolev inequality leads for $s\in ]1/2,3/2] $ to  
$$
|u_x|_{L^\infty} \lesssim |u|_{H^s}^{(s-1/2)-}   |u|_{H^{s+1}}^{(3/2-s)+}\lesssim \mu^{\frac{1}{2}(s-3/2)-} |u|_{H^{s+1}_\mu}
$$
whereas $|u_x|_{L^\infty} \lesssim |u|_{H^s_\mu} $ for $ s>3/2 $. This means that in the estimate of this term we loose a factor 
$\mu^{\frac{1}{2}(s-3/2)-}\lesssim \mu^{-1/2} $ when working at  regularity $ s\in ]1/2,3/2] $. Even if this factor do depend on $ s>1/2$ (it disappears for $ s>3/2$) , since we will loose anyway a factor $ \mu^{-\frac{1}{2}} $ in the estimate of the next term we do not take this into consideration  and simply keep the baddest case (i.e. $ s=\frac{1}{2}+ $)  to get 
\begin{equation}
\epsilon N^{2s}|(P_N(h_b u u_x),P_N  u)_{L^2}|\lesssim \epsilon\, \delta_N^2\mu^{-\frac{1}{2}} |u|_{H^{s+1}_\mu}  |u|_{H^s}^2 \; .\label{tod1}
\end{equation}
Now, we tackle the estimate of the second term of the right-hand side of \eqref{E3s}.  At this stage, it is worth noticing that there is a kind of lost of symmetry in the Boussinesq-Peregrin system since $ h$ appears in the first equation whereas only $ h_b$ appears in the second equation. Somehow, we will pay this lost of symmetry here since we will  loose a factor $\mu^{-1/2} $ even in high regularity. Indeed by using  \eqref{prod3} we get, for $ N\gg 1 $, 
\begin{align}
\epsilon\, N^{2s} |(P_N(u_x \zeta),P_N  \zeta)_{L^2}|& \lesssim \epsilon\,N^{s} \delta_N |u_x \zeta|_{H^s}  |P_N \zeta|_{L^2} \nonumber\\
 & \lesssim \epsilon\ \delta_N^2 \Bigl( |u|_{H^{s+1}} |\zeta|_{L^\infty} +|u_x|_{L^\infty} |\zeta|_{H^s} \Bigr) | \zeta|_{H^s} \nonumber\\
 & \lesssim  \epsilon\ \delta_N^2 \Bigl( \mu^{-1/2}  |\zeta|_{L^\infty} +|u_x|_{L^\infty} \Bigr) | \zeta|_{H^s}(|\zeta|_{H^s}+|u|_{H^{s+1}_\mu})\label{lost}
\end{align}
On the other hand, integrating by parts and using \eqref{cm1}  we obtain
\begin{align*}
\epsilon\, N^{2s}|(P_N(u \zeta_x),P_N  \zeta)_{L^2}|&=\epsilon\,N^{2s} \Bigl|-\frac{1}{2} (P_{\ll N} u_x P_N \zeta, P_N  \zeta)_{L^2} \\ 
&+([P_N, P_{\ll N} u]  \zeta_x ,P_N  \zeta)_{L^2} 
 + \Bigl(P_N(P_{\gtrsim N} u \,  \zeta_x), P_N  \zeta\Bigr)_{L^2} \Bigr|\\
&\lesssim \epsilon\ N^{2s}|u_x|_{L^\infty}  |\tilde{P}_N \zeta |^2_{L^2}\\
& \hspace{1cm}+|P_N \zeta|_{L^2} \sum_{N_1\gtrsim N} \Bigl(\frac{N}{N_1}\Bigr)^s  
|P_{N_1}u |_{H^{s+1}} |P_{\lesssim N_1} \zeta |_{L^\infty_x}  \\
 & \lesssim \epsilon\, \delta_N^2 \Bigl( |u_x|_{L^\infty}  | \zeta |^2_{H^s}+|\zeta|_{L^\infty_x}   |u|_{H^{s+1}} |\zeta|_{H^s}\Bigr) \\
 & \lesssim \epsilon\,\delta_N^2 \Bigl( |u_x|_{L^\infty} +|\zeta|_{L^\infty_x}  \mu^{-1/2} \Bigr) | \zeta|_{H^s}(|\zeta|_{H^s}+|u|_{H^{s+1}_\mu}) \; .
 \end{align*}
where in the penultimate step we use the discrete Young convolution inequality. Note however that, as in \eqref{tod1},  we could improve this last inequality in order to loose again a factor 
$\mu^{\frac{1}{2}(s-3/2)-}\lesssim \mu^{-1/2} $ when working at  regularity $ s\in ]1/2,3/2] $ and nothing when $ s>3/2$. But since anyway we loose a factor $ \mu^{-1/2} $ in \eqref{lost} we do not try to get an optimal result here. To complete the bound on this term we notice that for $ N\lesssim 1  $ we directly get 
$$
\epsilon N^{2s} |(\partial_x P_N (u_x \zeta),P_N \zeta) | \lesssim\epsilon |u_x|_{L^2_x} |\zeta|_{L^2_x}^2
$$

To control the third and  fourth  terms in the right-hand side  of \eqref{E3s} we make use of \eqref{prod5}-\eqref{prod6} to get, for any $ N\ge 1 $,
\begin{align}
N^{s} \Bigl(  |[P_N,  \partial_x h_b] u |_{L^2_x}+|[P_N,  h_b] u_x |_{L^2_x}  \Bigr) 
\lesssim \beta \delta_N |b_x|_{W^{s+,\infty}} |u|_{H^s}\; .
\end{align}
It thus remains to estimate the terms in the last line of the right-hand side of \eqref{E3s}.
  For this we first notice   that \eqref{prod6} leads to 
\begin{align}
N^{2s} \Bigl| \Bigl([P_N, h_b ] \zeta_x ,P_N  u \Bigr)_{L^2} \Bigr| 
& \lesssim  \delta_N^2\, \beta |b_x|_{W^{s+,\infty}} |\zeta|_{H^s} |u|_{H^s} \; .
\end{align}
On the other hand, by benefiting of the coefficient $ \mu $, \eqref{prod6}  and \eqref{estut} lead to 
\begin{align}
\mu N^{2s} \Bigl|\Bigl(  \partial_x ([P_N, h_b^3 \partial_x] u_t), P_N u \Bigr)_{L^2}\Bigr| 
& = \mu \, N^{2s} \Bigl|\Bigl(  [P_N, h_b^3 \partial_x] u_t, P_N u_x  \Bigr)_{L^2}\Bigr|\nonumber \\
&  \lesssim   \mu  \, \delta_N^2\, |\partial_x h_b^3|_{W^{s+,\infty}} |u_t|_{H^s} |u_x|_{H^s} \nonumber \\
& \lesssim  \, \delta_N^2\beta |\partial_x h_b^3|_{W^{s+,\infty}}|u_t|_{H^{s}_\mu}  |u|_{H^{s+1}_\mu}\nonumber \\
& \lesssim C(|b_x|_{W^{s+,\infty}})\, \delta_N^2\beta ( 1+\epsilon |u|_{L^\infty_x}  )\nonumber \\
&\hspace{2cm}\times(|\zeta|_{H^{s}}+|u|_{H^s}) |u|_{H^{s+1}_\mu} \; .
\end{align}
for any $ s>1/2 $.\\
Finally we notice that the expression of $ g_b $ in \eqref{defgbeta} (recall that $ h_b=1-\beta b$) leads to 
\begin{align}
 N^s[P_N, g_b] u_t & =   \beta N^s\Bigl(- [P_N, b] u_t+ \mu [P_N, \frac{1}{2} \partial_x(h_b^2 b_x)+\beta b_x^2] u_t\Bigr) 
 \nonumber\\
 & =  B_1+B_2 \label{f1} \; .
  \end{align}
   Applying  \eqref{prod5} and then \eqref{estut} we eventually obtain
$$
|B_2|_{L^2}   \le  \delta_N  \beta \mu |u_t|_{H^s} \lesssim \delta_N \beta \sqrt{\mu} |u_t|_{H^{s}_\mu} 
\lesssim \delta_N \beta \sqrt{\mu}(|\zeta|_{H^s}+ \epsilon |u|_{L^\infty_x}  |u|_{H^s}) \; ,
 $$
 where the implicit constant depends on $ |b_x|_{W^{s+1+,\infty}}$.\\
 To estimate the $ L^2$-norm of $B_1$ we separate the contribution of $P_{\lesssim 1} u_t $ and $ P_{\gg 1} u_t $. For the first contribution 
 we use again \eqref{prod5} and then \eqref{estut}  to get 
 $$
\beta N^s \Bigl|  [P_N, b] P_{\lesssim 1} u_t \Bigr|_{L^2} \lesssim 
\delta_N \beta |P_{\lesssim 1} u_t|_{L^2}  \lesssim \delta_N   \beta (|\zeta|_{H^s}+ \epsilon |u|_{L^\infty_x}  |u|_{H^s})
 $$
 whereas for the second one we use \eqref{prod6} and \eqref{estut}  to get 
 $$
 \beta N^s \Bigl|  [P_N, b] P_{\gg 1} u_t \Bigr|_{L^2} \lesssim 
\delta_N \beta |P_{\gg 1} u_t|_{H^{s-1}}  \lesssim \delta_N  \beta (|\zeta|_{H^s}+ \epsilon |u|_{L^\infty_x}  |u|_{H^s})
 $$
 where the implicit constants depend respectively on $|b|_{W^{s+,\infty}} $ and on $|b_x|_{W^{s+,\infty}}$
Gathering the two above estimates we eventually get, for any $ N\ge 1$, 
\begin{equation}
N^{2s} \Bigl|\Bigl( [P_N, g_b] u_t , P_N u\Bigr)_{L^2}\Bigr| \lesssim 
  \delta_N^2 \beta  (1+ \epsilon |u|_{L^\infty_x})  (|\zeta|_{H^s}^2 +|u|_{H^s}^2)
\end{equation}
that concludes the estimates on the terms appearing in right-hand side of \eqref{E3s}.

Gathering the above estimates,  integrating in time and summing in $ N\ge 1 $,   \eqref{E3s} leads to
\begin{align} 
 \sup_{t\in (0,T)} &E^s_{\mu,\nu} (\zeta(t),u(t)) 
\lesssim   E^s_{\mu,\nu} (\zeta_0,u_0) \nonumber \\
& + T \, (\epsilon \vee \beta)  \Bigl(1+|u|_{L^\infty_{Tx}}+|u_x|_{L^\infty_{Tx}}+ (1+\mu^{-1/2}) |\zeta|_{L^\infty_{Tx}}\Bigr)  
  \sup_{t\in (0,T)} E^s_{\mu,\nu} (\zeta(t),u(t)) 
\label{gf}
\end{align}
for any $0<T< T^{\infty}_{s,\mu,\nu} $, where $ T^{\infty}_{s,\mu,\nu} $  denotes the maximal time of existence of the solution $(\zeta,u) $ 
to \eqref{3l}  in $ (H^{s+1}(\R))^2 $.

According to classical Sobolev inequalities, 
 the local well-posedness of \eqref{3l} in $ (H^{s+1}(\R))^2 $ together with \eqref{gf} ensure that 
for any $ s >1/2 $, $T^\infty_{s,\mu,\nu}= T^\infty_{\frac{1}{2}+,\mu,\nu} $  (the common maximal time of existence of solutions). On the other hand, 
since
$$
1+|u|_{L^\infty_{Tx}}+|u_x|_{L^\infty_{Tx}}+ (1+\mu^{-1/2}) |\zeta|_{L^\infty_{Tx}} \lesssim 1+\mu^{-1/2}  \sup_{t\in (0,T)} E^{\frac{1}{2}+}_{\mu,\nu} (\zeta(t),u(t)) \; ,
$$
\eqref{gf} 
with $ s=\frac{1}{2}+ $ together with a  classical continuity argument ensure that 
\begin{equation}\label{defTinfini}
  T^\infty_{\frac{1}{2}+,\mu,\nu}\gtrsim T_{0}:= C\, 
(\epsilon\vee \beta)^{-1}\Bigl( 1+ \mu^{-1/2}  E^{\frac{1}{2}+}_{\mu,\nu}(\zeta_0,u_0)^{1/2}\Bigr)^{-1}
\end{equation}
where $C $ is independent of $\mu, \, \nu,  \, \epsilon $ and $ \beta$. Moreover,
  for any $s>1/2 $, it holds 
 \begin{equation}
 \sup_{t\in [0,T_0]} E^{s}_{\mu,\nu}(\zeta,u)(t) \lesssim  E^{s}_{\mu,\nu}(\zeta_0,u_0) \; .
\label{defT0}
 \end{equation}
 with an implicit constant that is independent of $\mu, \, \nu,  \, \epsilon $ and $ \beta$.
\subsubsection{$H^{s-1}$ estimate for the difference of two solutions.}\label{sub313}
Let $(\zeta_i,u_i )$ be two solutions to \eqref{3l} with respectively $ \nu_1 $ and $ \nu_2 $, then setting $ \eta=\zeta_1-\zeta_2 $ and $ v=u_1-u_2 $ it holds 
\begin{eqnarray}\label{3ldif}
\left\{
\begin{array}{lcl}
\eta_t-\nu_1 \eta_{txx}+h_b v_x+\epsilon (u_1\eta+v\zeta_2)_x&=&(\nu_1-\nu_2) \partial_t \zeta_{2,xx},\\
 \mfT_b v_t+h_b \eta_x+\frac{\epsilon}{2} h_b ((u_1+u_2) v)_x&=& 0\quad ,\\
\end{array} \right.
\end{eqnarray}
 For $N\ge  1$, applying the operator $P_{N} $  to the equations in (\ref{3}), multiplying respectively by $N^{2(s-1)} P_{ N}  \zeta$ and $ N^{2(s-1)} P_{ N}   v$ the first and the second equation,  integrating with respect to $x$, adding the resulting equations and proceeding as above we get 
\begin{align}
\displaystyle \frac{N^{2(s-1)} }{2} &\frac{d}{dt} \Bigl(|P_{N} \eta|_{L^2}^2+  \nu_1 |P_{N} \eta_x|^2_{L^2}+ (\mfT_b P_{N} v, P_{N} v)_{L^2_x}\Bigr)\nonumber \\
&=-N^{2(s-1)} \frac{\epsilon}{2} \Bigl(   P_{N} ( h_b ((u_1+u_2) v)_x) ,P_{N}  v\Bigr)_{L^2} \nonumber\\
&\quad-N^{2(s-1)} \epsilon \Bigl(   P_{N}  (u_1\eta+v\zeta_2 )_x ,P_{N}  \eta\Bigr)_{L^2} \nonumber\\
&\quad - N^{2(s-1)} ([P_{N} , \partial_x h_b] v , P_{N}  \eta)_{L^2}- N^{2(s-1)}  ([P_{N},  h_b] v_x , P_{N} \eta)_{L^2}\nonumber \\
& \quad-N^{2(s-1)}  \Bigl([P_{\le N}, h_b ] \eta_x - \frac{\mu}{3} \partial_x([P_{N}, h_b^3 \partial_x] v_t) + 
[P_{N}, g_b] v_t   ,P_{N} v\Bigr)_{L^2}\nonumber \\
&\quad + N^{2(s-1)}   (\nu_1-\nu_2)\Bigl( P_{N} \partial_t \zeta_{2,xx}, P_{N} \eta)_{L^2} \; .
\label{E33s}
\end{align}
Let us estimate the terms in the right-hand side one by one. Actually  we will proceed as in the a priori estimates on the solution except that for the difference we do not care of loosing powers of  $ \mu^{-1} $.
First, for $ N\lesssim 1$, it is easy to check that 
\begin{align*} 
N^{2(s-1)}\Bigl|\Bigl(   P_{N} ( h_b ((u_1+u_2) v)_x) ,P_{N}  v\Bigr)_{L^2}\Bigr| &  \lesssim |h_b((u_1+u_2) v)_x)|_{H^{s-1}} |v|_{H^{s-1}}\\
& \lesssim ( |u_1|_{H^s} + |u_2|_{H^s} ) |v|_{H^s} |v|_{H^{s-1}}\\
&\lesssim  \mu^{-\frac{1}{2}} ( |u_1|_{H^s} + |u_2|_{H^s} ) |v|_{H^s_\mu} |v|_{H^{s-1}} \; .
\end{align*}
Now for $ N\gg 1 $ we  rewrite $\frac{1}{2}((u_1+u_2)v)_x  $ as $u_1 v_x +v u_{2,x} $. Since $ s>1/2 $, \eqref{prod2} leads to  
$$
N^{s-1}  |P_N(h_b v u_{2x}) |_{L^2}\le  \delta_N |v u_{2x} |_{H^{s-1}}\lesssim \delta_N \mu^{-\frac{1}{2}}|v|_{H^{s-1}} |u_2|_{H^{s+1}_\mu}\;.
 $$
On the other hand, decomposing $P_N(h_b u_1 v_x) $ as 
$$
P_N(h_b u_1 v_x) = P_N (P_{\ll N} (h_b u_1) \, v_x) + P_N (P_{\gtrsim N} (h_b u_1) v_x)= A_{1,N}'+A_{2,N}' \; ,
$$
we first notice that  \eqref{cm1} and integration by parts lead to 
\begin{align*}
N^{2(s-1)} |(A_{1,N}', P_N v)_{L^2_x}|&  \lesssim \delta_N^2  |(\partial_x(h_b u_1) |_{L^\infty_x} |v|_{H^{s-1}}^2 
\lesssim \delta_N^2  \mu^{-\frac{1}{2}}  |u_1|_{H^{s+1}_\mu} |v|_{H^{s-1}}^2      \;.
\end{align*}
Then  rewriting $ A_{2,N}' $ as 
$$
A_{2,N}'=  P_N \Bigl( \sum_{N_1\gtrsim N} P_{N_1} (h_b u_1) P_{\lesssim N_1} v_x\Bigr)
$$
we observe that \eqref{prod5} leads to 
\begin{align*}
N^{s-1} |A_{2,N}'|_{L^2_x} &\lesssim N^{-1} \sum_{N_1\gtrsim N}
\Bigl(\frac{N}{N_1}\Bigr)^{s}  N_1 |P_{N_1} (h_b u_1) |_{H^{s}} |P_{\lesssim N_1} v |_{L^\infty_x}\\
&  \lesssim N^{-1} \delta_N |(h_b u_1)_x  |_{H^{s}} |v|_{H^{s}} \\
& \lesssim   \delta_N  \mu^{-1}   |u_1|_{H^{s+1}_\mu} |v |_{H^s_\mu}\; .
\end{align*}
Gathering the above estimates we eventually get , 
$$
\epsilon N^{2(s-1)} |(P_N(h_b ((u_1+u_2)v)_x) , P_N v)_{L^2_x}|\lesssim \delta_N^2  \mu^{-1}  |u_1|_{H^{s+1}_\mu} |v|_{H^{s-1}}^2     , \quad \forall N\ge 1 \;.
$$
Let us now tackle the second term in the right-hand side of \eqref{E33s}.
For $ N\lesssim 1 $,  \eqref{prod5} and then \eqref{prod2}  lead to 
\begin{align*} 
N^{2(s-1)}\Bigl| \Bigl(   P_{N}  (u_1\eta+v\zeta_2 )_x ,P_{N}  \eta\Bigr)_{L^2}\Bigr| &  \lesssim |(u_1\eta+v\zeta_2 )_x|_{H^{s-2}} |\eta|_{H^{s-1}}\\
& \lesssim ( |u_1|_{H^s} |\eta|_{H^{s-1}}+ |\zeta_2|_{H^s}  |v|_{H^{s-1}} )|\eta|_{H^{s-1}}\\
 & \lesssim   ( |u_1|_{H^s} + |\zeta_2|_{H^s}  |v|_{H^s_\mu})( |v|_{H^{s-1}}^2+|\eta|_{H^{s-1}}^2)
\end{align*}
where we used that for $ s>1/2$,  $ s+(s-1)=2s-1> (s-1)+\frac{1}{2} $.\\
 Now, for $ N\gg 1$, by using  again \eqref{prod2}  we get 
 $$
N^{s-1} |P_N (v\zeta_2)_x|_{L^2}\lesssim |P_N(v \zeta_2))|_{H^s} \lesssim \delta_N |v \zeta_2|_{H^s} \lesssim \delta_N  |v|_{H^s}  |\zeta_2|_{H^s} 
$$
and 
\begin{align}
\epsilon\, N^{(s-1)} |P_N(u_{1,x} \eta)|_{L^2_x} &\lesssim \epsilon\, \delta_N |u_{1,x} \eta|_{H^{s-1}}  
  \lesssim \epsilon\ \delta_N |u_{1,x}|_{H^{s}} |\eta|_{H^{s-1}} \nonumber \\
  &\lesssim  \epsilon\ \delta_N \mu^{-1/2}|u_1|_{H^{s+1}_\mu}  | \eta|_{H^{s-1}}  \; .\label{lost2}
\end{align}
On the other hand, integrating by parts and using \eqref{cm1} and \eqref{proN2} we obtain
\begin{align*}
\epsilon\, N^{2(s-1)}|(P_N(u_1 \eta_x),P_N  \eta)_{L^2}|&=\epsilon\,N^{2(s-1)} \Bigl|-\frac{1}{2} (P_{\ll N} u_{1,x} P_N \eta, P_N  \eta)_{L^2}  \\
&\quad+([P_N, P_{\ll N} u_1]  \eta_x ,P_N  \eta)_{L^2} 
+ \Bigl(P_N(P_{\gtrsim N} u_1 \,  \eta_x), P_N  \eta\Bigr)_{L^2} \Bigr|\\
 & \lesssim \epsilon\, \delta_N^2 \Bigl( |u_{1,x}|_{L^\infty}  | \eta |^2_{H^{s-1}}+  |u_1|_{H^{s+1}} |\eta|_{H^{s-1}}^2\Bigr) \\
 & \lesssim \epsilon\,\delta_N^2 \mu^{-\frac{1}{2}}  |u_1|_{H^{s+1}_\mu}| \eta|_{H^{s-1}}^2 \; .
 \end{align*}
To control the third and  fourth  terms in the right-hand side  of \eqref{E33s} we make use of \eqref{prod5}-\eqref{prod6} to get 
\begin{align}
N^{s-1} \Bigl(  |[P_N,  \partial_x h_b] v |_{L^2_x}+|[P_N,  h_b] v_x |_{L^2_x}  \Bigr) 
\lesssim \beta \delta_N |v|_{H^{s-1}}\; .
\end{align}
For the fifth term we note   that \eqref{prod6} leads to 
\begin{align}
N^{2(s-1)} \Bigl| \Bigl([P_N, h_b ] \eta_x ,P_N  v \Bigr)_{L^2} \Bigr| 
& \lesssim  \delta_N^2\, \beta |b_x|_{W^{|s-1|+,\infty}} |\eta|_{H^{s-1}} |v|_{H^{s-1}} \; .
\end{align}
Now, according to \eqref{3ldif} and Lemma \ref{proprim'} we  infer that for $ s>1/2 $, 
\begin{align}
| v_t|_{H^{s-1}_\mu} & \lesssim  \Bigl|\partial_x\Bigl(h_b \eta+\frac{\epsilon}{2} h_b (u_1+u_2)v\Bigr)\Bigr|_{H^{s-2}} + \Bigl|\partial_x h_b \eta + \frac{\epsilon}{2} \partial_x h_b (u_1+u_2) v \Bigr|_{H^{s-2}} \nonumber \\
&\lesssim  
   |\eta|_{H^{s-1}}+ \epsilon (|u_1|_{H^s} +|u_2|_{H^s}) |v|_{H^{s-1}} \; .\label{estvt}
   \end{align}
 \eqref{prod6}  and \eqref{estvt} then lead to 
\begin{align}
\mu N^{2(s-1)} \Bigl|\Bigl(  \partial_x ([P_N, h_b^3 \partial_x] v_t), & P_N v  \Bigr)_{L^2}\Bigr| 
 = \mu  N^{2(s-1)} \Bigl|\Bigl(  [P_N, h_b^3 \partial_x] v_t, P_N v_x  \Bigr)_{L^2}\Bigr|\nonumber \\
&  \lesssim   \mu  \delta_N^2\, |\partial_x h_b^3|_{W^{s+,\infty}} |v_t|_{H^{s-1}} |v_x|_{H^{s-1}} \nonumber \\
& \lesssim    \delta_N^2\beta |v_t|_{H^{s-1}_\mu}  |v|_{H^{s}_\mu}\nonumber \\
& \lesssim   \delta_N^2\beta \Bigl(  |\eta|_{H^{s-1}}+ \epsilon (|u_1|_{H^s} +|u_2|_{H^s}) |v|_{H^{s-1}}\Bigr) |v|_{H^{s}_\mu} \; .
\end{align}
for any $ s>1/2 $.\\
It remains to estimate the contribution of $[P_N, g_b] v_t$. 
Clearly \eqref{f1} also holds when substituting $u_t $ by  $ v_t $.   Applying  \eqref{prod5} and then \eqref{estut} we eventually get 
\begin{align*}
 \beta N^{s-1} \mu |[P_N, \frac{1}{2} \partial_x(h_b^2 b_x)+&\beta b_x^2] v_t |_{L^2}
\lesssim  \delta_N  \beta \mu |v_t|_{H^{s-1}} \lesssim  \delta_N  \beta \sqrt{\mu} |v_t|_{H^{s-1}_\mu} \\
&\lesssim \delta_N \beta \sqrt{\mu}(  |\eta|_{H^{s-1}}+ \epsilon (|u_1|_{H^s} +|u_2|_{H^s}) |v|_{H^{s-1}}) \; .
 \end{align*}
 Also  using again \eqref{prod5} and then \eqref{estvt}  we get 
\begin{align*}
\beta N^{s-1}\Bigl|  [P_N, b] P_{\lesssim 1} v_t \Bigr|_{L^2} & \lesssim 
\delta_N \beta |P_{\lesssim 1} v_t|_{L^2} \\
&\lesssim \delta_N   \beta ( |\eta|_{H^{s-1}}+ \epsilon (|u_1|_{H^s} +|u_2|_{H^s}) |v|_{H^{s-1}}),
\end{align*}
 whereas for the second one we use \eqref{prod6} and \eqref{estvt}  to get 
\begin{align*}
 \beta N^{s-1} \Bigl|  [P_N, b] P_{\gg 1} v_t \Bigr|_{L^2} &\lesssim 
\delta_N \beta |P_{\gg1} v_t|_{H^{s-2}} \\
&\lesssim \delta_N  \beta( |\eta|_{H^{s-1}}+ \epsilon (|u_1|_{H^s} +|u_2|_{H^s}) |v|_{H^{s-1}}).
\end{align*}
Gathering all the estimates above we eventually   get
\begin{align}
\displaystyle N^{2(s-1)}  & \frac{d}{dt} \Bigl(|P_N \eta|_{L^2}^2+ \nu_1 |P_N \eta_x|^2_{L^2}+ (\mfT_b P_N v, P_N v)_{L^2_x} \Bigr)\nonumber \\
\lesssim &\; \delta_N^2 \mu^{-1} (1+ |u_1|_{H^{s+1}_\mu} + |u_2|_{H^{s+1}_\mu}+|\zeta_2|_{H^{s}}) ( |v|_{H^s_\mu}^2+|\eta|_{H^{s-1}}^2)\nonumber \\
& 
 + \delta_N^2 |\nu_1-\nu_2|^2 | \partial_t \zeta_{2,xx}|_{L^2}^2 \;  . \label{difdif}
 \end{align} 
 Therefore integrating \eqref{difdif} on $ (0,t) $ for $ 0<t<T $  and summing in $ N\ge 1 $ we get 
 \begin{align} 
   E^{s-1}_{\mu,\nu_1}& (\eta(t),v(t)) \lesssim   E^{s-1}_{\mu,\nu_1} (v(0), \eta(0))+ T |\nu_1-\nu_2|^2 |\zeta_{2,t}|_{L^\infty_T H^{s+1}}^2\nonumber \\
 & + T \mu^{-1}  (1+|{u_1}|_{L^\infty_T H^{s+1}_\mu}+|{u_2}|_{L^\infty_T H^{s+1}_\mu}+|\zeta_2|_{L^\infty_T H^s}) (|v|_{L^\infty_T H^{s}_\mu}^2+|\eta|_{L^\infty_T H^{s-1}}^2).
\label{dif}
\end{align}
\subsection{Local well-posedness of \eqref{BP} }
We will prove the local well-posedness of  \eqref{BP} using a standard compactness method. 
\begin{proposition}[LWP and intermediate time existence for \eqref{BP}]
 \label{essentiel}
Let $ \mu>0 $, $ \varepsilon>0 $, $ \beta>0 $, $s>1/2$ and $ b\in C^{s+2+}_b(\R) $. Assume moreover that \eqref{hb} and Hypothesis \ref{hyp2} are satisfied. 
Then for any  $(\zeta_0,u_0)\in H^s(\R) \times H^{s+1}_\mu(\R) $,  there exists $ T_0=T_0(|\zeta_0|_{H^{\frac{1}{2}+}}+ |u_0|_{H^{\frac{3}{2}+}_\mu})$ verifying 
\begin{equation}\label{defTT0}
 T_0 \gtrsim  (\epsilon\vee \beta)^{-1} (1+\mu^{-1/2}  E^{\frac{1}{2}+}_{\mu,0}(\zeta_0,u_0)^{1/2}\Bigr)^{-1}
\end{equation}
 such that  there exists a solution $(\zeta,u)$ of the Cauchy problem  (\ref{BP}) in  \\ $ C([0,T_0]; H^s(\R)\times H^{s+1}_\mu(\R))$.   This is the unique solution to the IVP \eqref{BP} that belongs to 
 $ L^\infty(]0,T_0[; H^s(\R)\times H^{s+1}_\mu(\R)) $.

Moreover,
$$
|(\zeta,u)|_{L_{T_0}^{\infty}H^s\times H^{s+1}_\mu} \lesssim 
|(\zeta_0,u_0)|_{H^s\times H^{s+1}_\mu}
$$
and  for any $\alpha>0 $, the solution map $S :(\zeta_0,u_0) \longrightarrow (\zeta,u)$ is  continuous  
 from $B(0,\alpha)_{H^s\times H^{s+1}_\mu }  $ into $ C([0,T_0(\alpha)]; H^s(\R)\times H^{s+1}_\mu(\R)) $. 
 Finally, let $ T^* $ be the maximal time of existence in $ H^s(\R) \times H^{s+1}_\mu(\R) $ of the solution  $(\zeta,u)$  emanating from $(\zeta_0,u_0)\in H^s(\R) \times H^{s+1}_\mu(\R) $. Then for any $ 0<T'<T^* $ it holds 
 \begin{align} \label{exp}
  | \zeta|_{L^\infty_{T'} H^s}^2 & + \mu | u|_{L^\infty_{T'} H^{s+1}}^2 \nonumber \\ 
 & \lesssim \exp\Bigl( C\; T'  (\epsilon \vee \beta) \Bigl(1+|u|_{L^\infty_{Tx}}+|u_x|_{L^\infty_{Tx}}+ (1+\mu^{-1/2}) |\zeta|_{L^\infty_{Tx}}\Bigr)\Bigr)E^s_\mu (\zeta_0,u_0)
 \end{align}
 for some universal constant $ C>0 $.
 \end{proposition}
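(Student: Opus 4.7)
The proof follows the classical Bona--Smith compactness scheme, based on the regularized problem \eqref{3l} and the a priori estimates \eqref{gf}, \eqref{defTinfini}, \eqref{defT0} and \eqref{dif} established in Section \ref{31}. Given $(\zeta_0,u_0)\in H^s(\R)\times H^{s+1}_\mu(\R)$, I would smooth $\zeta_0$ by a Bona--Smith mollification (for instance $\zeta_0^\nu:=(1-\nu\partial_x^2)^{-1}\zeta_0$), obtaining $(\zeta_0^\nu,u_0)\in (H^{s+1}(\R))^2$ with $|\zeta_0^\nu|_{H^s}\le|\zeta_0|_{H^s}$, $\nu|\zeta_{0,x}^\nu|_{H^s}^2\le|\zeta_0|_{H^s}^2$ and $\zeta_0^\nu\to\zeta_0$ in $H^s$. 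The Cauchy--Lipschitz argument recalled at the beginning of Section \ref{31} provides a smooth solution $(\zeta^\nu,u^\nu)\in C^1([0,T^\infty_{s,\mu,\nu}];(H^{s+1})^2)$ of \eqref{3l}. Applying \eqref{gf} at level $s=1/2+$ together with a standard continuity argument, one gets $T^\infty_{s,\mu,\nu}\ge T_0$ with $T_0$ given by \eqref{defTT0} \emph{uniformly in $\nu$}, and then \eqref{defT0} at level $s$ yields the uniform estimate
\[\sup_{\nu>0}\sup_{t\in[0,T_0]} E^s_{\mu,\nu}(\zeta^\nu(t),u^\nu(t))\lesssim E^s_{\mu,0}(\zeta_0,u_0).\]

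I would then pass to the limit $\nu\to 0$ using \eqref{dif}. For two regularizations $\nu_1,\nu_2$ with $\nu_1\ne\nu_2$, the estimate \eqref{dif} controls $E^{s-1}_{\mu,\nu_1}(\zeta^{\nu_1}-\zeta^{\nu_2},u^{\nu_1}-u^{\nu_2})$ by the initial difference (which tends to $0$ in $H^{s-1}$) plus a term $T|\nu_1-\nu_2|^2|\zeta^{\nu_2}_t|_{L^\infty_T H^{s+1}}^2$. Using the first equation of \eqref{3l} and the fact that $(1-\nu_2\partial_x^2)^{-1}$ maps $H^{s-1}$ to $H^{s+1}$ with operator norm $\lesssim\nu_2^{-1}$, together with the uniform $H^s\times H^{s+1}_\mu$ bound above (which controls $-\partial_x(h_b u^{\nu_2})-\eps\partial_x(\zeta^{\nu_2}u^{\nu_2})$ in $H^{s-1}$), this term can be made small for a suitable sequence $\nu\to 0$. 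A Gronwall argument yields the Cauchy property of $\{(\zeta^\nu,u^\nu)\}$ in $C([0,T_0];H^{s-1}\times H^s_\mu)$, hence convergence to a limit $(\zeta,u)\in L^\infty(0,T_0;H^s\times H^{s+1}_\mu)\cap C([0,T_0];H^{s-1}\times H^s_\mu)$ that satisfies \eqref{BP} distributionally. The delicate point, and the main technical obstacle, is to upgrade the continuity from $H^{s-1}\times H^s_\mu$ to $H^s\times H^{s+1}_\mu$: this is the standard Bona--Smith trick of comparing $(\zeta^\nu,u^\nu)$ with $(\zeta^\delta,u^\delta)$ for $0<\delta\ll\nu$ and exploiting that the initial difference $\zeta_0^\nu-\zeta_0^\delta$ is \emph{small in $H^s$} (not only in $H^{s-1}$); combined with weak-$\ast$ lower semicontinuity of norms and the Cauchy property just obtained, this yields strong convergence in $C([0,T_0];H^s\times H^{s+1}_\mu)$.

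Finally, uniqueness in $L^\infty(0,T_0;H^s\times H^{s+1}_\mu)$ follows by applying \eqref{dif} with $\nu_1=\nu_2=0$ to two such solutions sharing the same datum (the derivation in Section \ref{sub313} remains valid with $\nu_i=0$) followed by Gronwall. Continuity of the solution map on bounded balls of $H^s\times H^{s+1}_\mu$ is a direct by-product of the same Bona--Smith scheme applied to a convergent sequence of initial data. The announced estimate on $|(\zeta,u)|_{L^\infty_{T_0}H^s\times H^{s+1}_\mu}$ passes to the limit from the uniform bound above, and the blow-up criterion \eqref{exp} is obtained by re-running the energy estimate \eqref{gf} at level $s$ with $\nu=0$ for the limit solution $(\zeta,u)$ and invoking Gronwall's lemma.
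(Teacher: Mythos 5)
Your overall architecture — regularize, apply the uniform bounds \eqref{gf}, \eqref{defTinfini}, \eqref{defT0} of Subsection \ref{31}, pass to the limit via the difference estimate \eqref{dif}, upgrade to the strong norm by a Bona--Smith comparison, and obtain uniqueness, flow-map continuity and \eqref{exp} exactly as you describe — is the paper's. The gap is in the one step you yourself flag as delicate: the treatment of the error term $T\,|\nu_1-\nu_2|^2\,|\zeta_{2,t}|^2_{L^\infty_T H^{s+1}}$ in \eqref{dif}. With your single-parameter mollification $\zeta_0^\nu=(1-\nu\partial_x^2)^{-1}\zeta_0$ and the operator bound $\|(1-\nu_2\partial_x^2)^{-1}\|_{H^{s-1}\to H^{s+1}}\lesssim \nu_2^{-1}$ that you invoke, the best available estimate is $|\zeta^{\nu_2}_t|_{H^{s+1}}\lesssim \nu_2^{-1}$ (the forcing $-\partial_x(h_b u^{\nu_2})-\eps\partial_x(\zeta^{\nu_2}u^{\nu_2})$ is only uniformly bounded in $H^{s-1}$, since $u_0$ is not regularized and $\zeta^{\nu_2}$ is only uniformly in $H^s$). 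Hence the error term is of size $|\nu_1-\nu_2|^2\nu_2^{-2}$, which for $\nu_1\ll\nu_2$ is $O(1)$, not $o(1)$; telescoping does not help either, since $\sum_k|\nu_{k+1}-\nu_k|/\nu_k=+\infty$ for any sequence $\nu_k\downarrow 0$. So the Cauchy property in $C([0,T];H^{s-1}\times H^s_\mu)$ does not follow from the estimates as you have set them up, and the same obstruction reappears, in worse form, in the $H^s$-level Bona--Smith comparison.

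The paper's scheme is designed precisely to dodge this: the data are regularized by the sharp Fourier cutoff $S_n$, giving $(\zeta_{0,n},u_{0,n})\in H^\infty$ with the quantified growth $|\zeta_{0,n}|_{H^{s+r}}\le n^r|\zeta_0|_{H^s}$; persistence of regularity \eqref{313} then yields $|\partial_t\zeta_n|_{H^{s+2}}\lesssim n^3$ (estimate \eqref{ou}) \emph{without} using any smoothing of $(1-\nu_n\partial_x^2)^{-1}$, only its uniform boundedness; and the parabolic parameter is chosen as $\nu_n=n^{-5}$, decoupled from and much smaller than the inverse of this polynomial growth, so that the offending term is $O(n_2^{-4})$. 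In other words, the two regularization rates must be chosen independently, with the elliptic parameter decaying strictly faster than the blow-up of the higher norms of the smoothed data; locking them together through a single mollifier $(1-\nu\partial_x^2)^{-1}$ puts you exactly at the borderline where the error does not vanish. Everything else in your outline (uniqueness via \eqref{dif} with $\nu_1=\nu_2=0$ and iteration, the triangle-inequality argument for the flow map, and \eqref{exp} by iterating \eqref{gf} over intervals of length $\Delta t\sim(\epsilon\vee\beta)^{-1}(1+\dots)^{-1}$) coincides with the paper.
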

\begin{proof}
$\bullet $ {\it Unconditional uniqueness.} Let $ (\zeta_i,u_i)$, $i=1,2$ be two solution  of the IVP \eqref{BP}  that belong to  $L^\infty(]0,T[; H^s(\R)\times H^{s+1}_\mu)$  for some $ T>0 $.
Setting 
  $ \eta=\zeta_1-\zeta_2 $ and $ v=u_1-u_2 $, exactly the same calculations as in \ref{sub313} on the difference of two solutions  to \eqref{3l}  that lead to \eqref{dif} but with $ \nu_1=\nu_2=0 $   (note that all the computations are justified since for any $ N $,  $ P_N u_i $ and $ P_N \zeta_i $ 
  belong to $ C^1([0,T]; H^\infty)$) lead  for $0<T'<T $ to 
\begin{align} 
|v|_{L^\infty_{T'} H^{s}_\mu}^2+ & |\eta|_{L^\infty_{T'} H^{s-1}}^2  \lesssim   E^{s-1}_{\mu,0} (v(0), \eta(0))\nonumber \\
 & + T'\mu^{-1}    (1+|{u_1}|_{L^\infty_{T} H^{s+1}_\mu}+|{u_2}|_{L^\infty_{T} H^{s+1}_\mu}+|\zeta_2|_{L^\infty_{T} H^s}) \nonumber\\ 
& \hspace{4cm} \times (|v|_{L^\infty_{T'} H^{s}_\mu}^2+|\eta|_{L^\infty_{T'} H^{s-1}}^2)
\label{gfu}
\end{align}
that proves the uniqueness in this class by taking 
$$ 0<T'< \mu(1+|{u_1}|_{L^\infty_{T} H^{s+1}_\mu}+|{u_2}|_{L^\infty_{T} H^{s+1}_\mu}+|\zeta_2|_{L^\infty_{T} H^s})^{-1}
$$
 and  repeating the argument a finite number of times.

\noindent
$\bullet $ {\it Existence.}
Let $(\zeta_0,u_0)\in H^s(\R) \times H^{s+1}_\mu(\R) $. We regularize the initial data by setting 
$\zeta_{0,n}=S_n \zeta $ and $u_{0,n}=S_n u_0 $ where $S_n $ is the Fourier multiplier by  $\chi_{[-n,n]} $. It is straightforward to check that 
 for $ n\ge 1$, $(\zeta_{0,n},u_{0,n})\in (H^\infty(\R))^2 $ with 
 \begin{equation}\label{gd}
    |u_{0,n}|_{H^{s+r}} \le n^r  |u_0|_{H^{s}}\quad\text{and}\quad |\zeta_{0,n}|_{H^{s+r
 }}\le n^r |\zeta_0|_{H^s}\quad \text{for any} \, r\ge 0   \; .
  \end{equation}
 Setting $ \nu= \nu_n=n^{-5} $, we thus obtain that  for any $ s>0 $ and any $ r\ge 0 $
  $$
E^{s+r}_{\mu,\nu_n}(\zeta_{0,n},u_{0,n})=   |\zeta_{0,n}|_{H^{s+r}}^2+n^{-5}  |\partial_x \zeta_{0,n}|_{H^{s+r}}^2+|u_{0,n} |_{H^{s+r+1}_\mu}^2
   \lesssim n^{2r}  E^s_{\mu,0}(\zeta_{0},u_{0})
   $$
   
  In particular setting, for $ s>1/2$,
 \begin{equation} \label{defTs}
  T_s \sim \mu \,  \Bigl(1+|{u_0}|_{H^{s+1}_\mu}+|\zeta_0|_{H^s}\Bigr)^{-1} \le T_0\; , 
  \end{equation}
  we deduce from Subsection \ref{31} and \eqref{defTinfini},  that  we can construct a sequence $ (\zeta_n,u_n)_{n\ge 1} \subset 
  C^1([0,T_s]; (H^\infty(\R))^2) $
   such that for any $ n\ge 1 $, $ (\zeta_n,u_n)$ satisfies \eqref{3l} with $\nu= \nu_n=n^{-5} $.  
  Moreover, from \eqref{defT0}  and \eqref{gd} we infer  that for $ s>1/2 $ and $ r\ge 0 $
  \begin{align}
   \sup_{t\in [0,T_0]} E^{s+r}_{\mu,\nu_n}(\zeta_n,u_n)(t) &  \le 2  E^{s+r}_{\mu,\nu_n}(\zeta_{0,n},u_{0,n})
   \nonumber \\
   & \lesssim n^{2r} E^s_{\mu,0}(\zeta_0,u_0) \; .  \label{313}
 \end{align}
 On the other hand from the first equation in \eqref{3l} and \eqref{prod3} we obtain that on $ [0,T_0] $, 
\begin{align}
|\partial_t \zeta_n|_{H^{s+ 2}}&\le  |(1-\nu_n \partial_x^2)^{-1}\Bigl((h_b u_{n})_x +\epsilon ( \zeta_n u_n)_x \Bigr)|_{H^{s+ 2}}\nonumber\\
& \le |u_{n,x}|_{H^{s+2}}+|(u_n \zeta_n)_x|_{H^{s+2}}\nonumber\\
& \lesssim |u_n|_{H^{s+3}} (1+|\zeta_n|_{L^\infty} )+|\zeta_n|_{H^{s+{3}}} |u_{n}|_{L^\infty}
\nonumber \\
& \lesssim \sqrt{1+E_0^s(u_n,\zeta_n)}\sqrt{ E_0^{s+3} (u_n,\zeta_n)}\lesssim  n^{3}  (1+E_{\mu,0}^s(u_0,\zeta_0)) \; . \label{ou}
\end{align}
For $ n_1\ge n_2 $ applying \eqref{dif} with $ (\zeta_i, u_i)=(\zeta_{n_i},u_{n_i}) $, $i=1,2$, using \eqref{313}-\eqref{ou} and that 
$  |\frac{1}{n_1^{5}}-\frac{1}{n_2^{5}}| \le \frac{1}{n_2^{5}}$  we thus obtain 
\begin{align}
  \|\zeta_{n_1}-\zeta_{n_2}\|_{L^\infty_{T_s} H^{s-1}}^2+\|u_{n_1}-u_{n_2}\|_{L^\infty_{T_s} H^{s}}^2 
  &  \lesssim  E^{s-1}_{\mu,0}(\zeta_{0,n_1}-\zeta_{0,n_2}, u_{0,n_1}-u_{0,n_2}) \nonumber \\
&\hspace{2cm} + \frac{1}{n^{4}_2} \label{315}
\end{align}
where
$$
  T_s \sim \mu \,  \Bigl(1+|{u_0}|_{H^{s+1}_\mu}+|\zeta_0|_{H^s}\Bigr)^{-1} \le T_0 \; .
  $$
 This forces $ ((\zeta_n,u_n))_{n\ge 1}  $ to be a Cauchy sequence in $C([0,T_s]; H^{s-1}\times H^{s}) $.
  Since according to \eqref{defT0},  $((\zeta_n,u_n))_{n\ge 1}$ is bounded in  $C([0,T_0]; H^s\times H^{s+1}_\mu) $,  it follows that there exists $ (\zeta, u) \in 
 L^\infty ([0,T_s]; H^s\times H^{s+1}_\mu) $  such that 
 \begin{eqnarray}
 (\zeta_n, u_n) & \tendsto{n\to  +\infty} & (\zeta,u) \quad \text{in} \; C([0,T_s];H^{s'}\times H^{s'+1}_\mu) , \, \forall 0<s'<s 
 \; . \end{eqnarray}
 that is a solution of the IVP \eqref{BP}. \\
 \noindent
 $\bullet $ {\it Continuity in the strong norm} To prove the continuity of   $(\zeta,u) $ in $ H^s\times H^{s+1}_\mu $ we use Bona-Smith arguments to check that the sequence $ ((\zeta_n,u_n))_{n\ge 1}  $ is actually a Cauchy sequence in $C([0,T_s]; H^s\times H^{s+1}_\mu) $. 
  Let $ n_1\ge n_2\ge 1 $ and set $ (\eta,v)=\zeta_{n_1}-\zeta_{n_2}, u_{n_1}-u_{n_2}) $, $ \nu_i=
  \nu_{n_i}=n_i^{-5} $. By the definition of $(\zeta_n,u_n) $  for any $ 0<r<s $
  \begin{equation}
  E^{s-r}_{\mu,\nu_{n_2}}(\eta(0),v(0))\le n_2^{-2r} E^{s}_{\mu,0}(\eta(0),v(0)) 
  \end{equation}
  Therefore, \eqref{315} together with \eqref{313}  and \eqref{defTs} ensure that 
  \begin{equation}\label{li} 
  \|\eta\|_{L^\infty_{T_s} H^{s-1}}^2+\|v|_{L^\infty_{T_s} H^{s}}^2 \lesssim \frac{1}{n_2^2} E^{s} _{0}(\eta(0),v(0))+\frac{1}{n_2^4}
\le  \Bigl(\frac{1}{n_2}  \gamma(n_2)\Bigr)^2 \; .
  \end{equation}
  with $ \gamma(n) \to 0 $ as $ n\to +\infty $.  On the other hand, \eqref{313} ensures that for any $ r>0 $, 
   \begin{equation}
\sup_{t\in [0,T_{\frac{1}{2}+}]}  E^{s+r}_{\mu,\nu_{n_i}}(\zeta_{n_i}(t),u_{n_i}(t))\lesssim  n_i^{2r} E^{s} _{\mu,0}(\zeta_0,u_0)\; .\label{321}
  \end{equation}
  
  Now observing that $ (\eta,v)$ satisfies \eqref{3ldif} with $ (\zeta_i,u_i)=(\zeta_{n_i} ,u_{n_i}) $ and proceeding as for the obtention of   \eqref{difdif} we eventually get 
  \begin{align}
N^{2s} \frac{d}{dt}E^0_{\mu,\nu_{n_1}}(P_N \eta,P_N v)&
 \lesssim  \delta_N^2 \mu^{-1} (1+ |u_1|_{H^{s+1}_\mu} + |u_2|_{H^{s+1}_\mu}) ( |v|_{H^{s+1}_\mu}^2+|\eta|_{H^{s}}^2)\nonumber\\
 &\quad +\delta_N N^s   |P_N \eta|_{L^2} \Bigl(|v|_{H^{s}}
|\zeta_{n_2}|_{H^{s+1}}+ |v|_{H^{s+1}} |\zeta_{n_2}|_{H^s} \Bigr. \nonumber\\&\Bigl.\hspace{4cm} +n_2^{-5}|\partial_t \zeta_{n_{2},xx}|_{H^s}\Bigr)  \label{dif3}
\end{align}
But in view of \eqref{313} and  \eqref{li}
$$
 \|\zeta_{n_2}\|_{L^\infty_{T_s}H^{s+ 1}} \|v\|_{L^\infty_{T_s} H^s} \lesssim n_2 \frac{1}{n_2} \gamma(n_2)\tendsto{n_2\to +\infty} 0 
 $$
 and \eqref{ou} yields 
 $$
 \frac{1}{n_2^{ 5}} |\partial_t \zeta_{n_2}|_{L^\infty_{T_s}H^{s+2}} \lesssim \frac{1}{n_2^2} (1+E_{\mu,0}^s(u_0,\zeta_0)) \; .
 $$
Integrating in time and summing in $ N $, it thus follows that 
  \begin{align}
   | \eta|_{{L^\infty_{T_s}} H^{s}}^2 +\nu_{n_1}  | \eta|_{{L^\infty_{T_s}} H^{s+1}}^2& +| v|_{{L^\infty_{T_s}} H^{s+1}_\mu}^2
  \le E^s_{\mu,\nu_{n_1}}(\eta(0),v(0))+T_s \tilde{\gamma}(n_2)\nonumber \\
 & \quad+T_s \, \mu^{-1}( 1+ |u_{n_1}|_{L^\infty_{T_s} H^{s+1}_\mu} +|u_{n_2}|_{L^\infty_{T_s} H^{s+1}_\mu}
 +|\zeta_{n_2}|_{L^\infty_{T_s} H^{s}} )  \nonumber \\
 & \qquad \times ( | \eta|_{{L^\infty_{T_s}} H^{s}}^2 +| v|_{{L^\infty_{T_s}} H^{s+1}_\mu}^2)
  \label{difs}
\end{align}
that proves that $ ((\zeta_n,u_n))_{n\ge 1}  $ is indeed a Cauchy sequence in $C([0,T_s]; H^s\times H^{s+1}_\mu) $ and thus 
   $ (\zeta,u) \in C([0,T_s]; H^{s}\times H^{s+1}_\mu) $. 
Observe also that 
$$
E^s_{\mu,\nu_{n_1}}(\zeta_{0,n_1}-\zeta_{0,n_2}, u_{0,n_1}-u_{0,n_2})\tendsto{n_1\to +\infty} E^s_{\mu,0}(\zeta_0-\zeta_{0,n_2},u_0-u_{0,n_2})$$
$$\hspace*{7.5cm}=E^s_{\mu,0}((1-S_{n_2})\zeta_0,(1-S_{n_2})u_0), 
$$
and thus letting $ n_1 \to +\infty $ in \eqref{difs}  we get
 \begin{align}
   \sup_{t\in [0,T_s]} E^{s}_{\mu,0}(\zeta-\zeta_n,u-u_n)(t) &  \lesssim   E^s_{\mu,0}((1-S_n)\zeta_0,(1-S_n)u_0)
    + \tilde{\gamma}(n)  \; .\label{difh2s}
 \end{align}
  $\bullet ${\it Continuity of the flow-map.} 
Let now  $ ((\zeta_{0}^k,u_{0}^k))_{k\ge 1} \subset
H^{s}(\R)\times H^{s+1}_\mu(\R) $ be such that
$
(\zeta_{0}^k,u_{0}^k) \rightarrow (\zeta_0,u_0)
$
in $ H^{s}(\R)\times H^{s+1}(\R) $.
We want to prove that the emanating solution $ (\zeta^k,u^k)$  to \eqref{BP} tends to $ (\zeta,u)
$ in $ C([0,T_s];H^{s}\times H^{s+1}_\mu) $. We set $ \zeta_{0,n}^k=S_n \zeta_0^k$ and $ u_{0,n}^k=S_n u_0^k$ and we call $(\zeta_n^k,u_n^k)\in C([0,T_s];H^{s}\times H^{s+1}_\mu)$ the associated solution to \eqref{3l} with 
 $ \nu=\nu_n=n^{-5}$. 
By the triangle inequality, for $ k $ large enough, it holds  
\begin{align*}
 \sup_{t\in [0,T_s]}  E^{s}_{\mu,0}(\zeta-\zeta^k,u-u^k)(t) 
& \le  \sup_{t\in [0,T_s]}  E^{s}_{\mu,0}(\zeta-\zeta_n,u-u_n)(t) \\
&\qquad + \sup_{t\in [0,T_s]}  E^{s}_{\mu,0}(\zeta_n-\zeta_n^k,u_n-u_n^k)(t) \\
  &\qquad +\sup_{t\in [0,T_s]}  E^{s}_{\mu,0}(\zeta_n^k-\zeta^k,u_n^k-u^k)(t) 
\quad  .
\end{align*}
Using the estimate \eqref{difh2s} on the solution to \eqref{3l} we infer that
\begin{align}
  \sup_{t\in [0,T_s]} E^{s}_{\mu,0}(\zeta-\zeta_n,u-u_n)(t) &+   \sup_{t\in [0,T_s]} E^{s}_{\mu,0}(\zeta^k-\zeta^k_n,u^k-u^k_n)(t)\Bigr)\nonumber \\
 &  \lesssim E^s_{\mu,0}((1-S_{n})\zeta_0,(1-S_{n})u_0) \nonumber \\
 &\qquad +E^s_{\mu,0}((1-S_{n})\zeta_0^k,(1-S_{n})u_0^k)+
 \gamma(n) \label{kak-1}
\end{align}
and thus
\begin{equation}\label{kak1}
\lim_{n\to \infty} \sup_{k\in\N} \Bigl( \sup_{t\in [0,T_s]} E^{s}_{\mu,0}(\zeta-\zeta_n,u-u_n)(t) +   \sup_{t\in [0,T_s]} E^{s}_{\mu,0}(\zeta^k-\zeta^k_n,u^k-u^k_n)(t)\Bigr) =0 \, .
\end{equation}
Therefore, it remains to prove that for any fixed $ n \in \N $, 
\begin{equation}
\lim_{k\to +\infty} \sup_{t\in [0,T_s]}  E^{s}_{\mu,0}(\zeta_n-\zeta_n^k,u_n-u_n^k)(t)=0\label{rem}
\end{equation}
For this we first   notice that \eqref{dif}  with $ \nu_1=\nu_2 $ ensures that
\begin{align}\label{326}
\sup_{t\in ]0,T_s[}  E^{s-1}_{\mu,0}(\zeta_n-\zeta_n^k, u_n-u_n^k)(t)&\lesssim   E^{s-1}_{\mu,0} (\zeta_{0,n}-\zeta_{0,n}^k,u_{0,n}-u_{0,n}^k) \nonumber \\
& \lesssim E^{s-1}_{\mu,0} (\zeta_{0}-\zeta_{0}^k,u_{0}-u_{0}^k)\; .
\end{align}
and that \eqref{321} leads for $ r\ge 0  $ to 
\begin{equation}\label{327}
\sup_{t\in [0,T_{\frac{1}{2}+}]}  E^{s+r}_{\mu,0}(\zeta_{n}^k(t),u_{n}^k(t))\lesssim  n^{2r} E^{s} _{\mu,0}(\zeta_{0,n}^k,u_{0,n}^k)
\lesssim  n^{2r} (E^{s} _{\mu,0}(\zeta_0,u_0)+1)\; .
\end{equation}
Now, setting $ (\eta,v)=(\zeta_n-\zeta_n^k,u_n-u_n^k) $,  observing that $ (\eta,v)$ satisfies \eqref{3ldif} with $ (\zeta_1,u_1)=(\zeta_{n} ,u_{n}) $,
  $ (\zeta_2,u_2)=(\zeta_{n}^k,u_{n}^k) $ and $ 
\nu_1=\nu_2=n^{-5} $  and proceeding as in \eqref{dif3}  we 
 get 
    \begin{align}
N^{2s} \frac{d}{dt}E^0_{\mu, \nu_{n}}(P_N \eta,P_N v)&
 \lesssim   \delta_N^2 \mu^{-1} (1+ |u_n|_{H^{s+1}_\mu} + |u_n^k|_{H^{s+1}_\mu}) ( |v|_{H^{s+1}_\mu}^2+|\eta|_{H^{s}}^2)\nonumber\\
 & +\delta_N N^s   |P_N \eta|_{L^2} \Bigl(|v|_{H^{s}}
|\zeta_{n}^k|_{H^{s+1}}+ |v|_{H^{s+1}} |\zeta_{n}^k|_{H^s}\Bigr)  \; .  \label{dif33}
\end{align}
But \eqref{326}-\eqref{327} ensure that 
$$
 |v|_{H^{s}} |\zeta_{n}^k|_{H^{s+1}}\lesssim 
 n \, \mu^{-\frac{1}{2}}\Bigl[ (E^{s} _{\mu,0}(\zeta_0,u_0)+1)E^{s-1}_{\mu,0} (\zeta_{0}-\zeta_{0}^k,u_{0}-u_{0}^k)\Bigr] ^{1/2}\; .
$$ 
Therefore integrating in time and summing in $ N\ge 1 $, it  follows that 
  \begin{align}
   \| \eta\|_{{L^\infty_{T_s}} H^{s}}^2 +& \| v\|_{{L^\infty_{T_s}} H^{s+1}_\mu}^2  \lesssim E^s_{0}(\eta(0),v(0))+T_s \mu^{-1}  n^2 (E^{s} _{0}(\zeta_0,u_0)+1) \nonumber \\
   & \hspace{1cm} \times E^{s-1}_{0} (\zeta_{0}-\zeta_{0}^k,u_{0}-u_{0}^k)+T_s \mu^{-1}( 1+ \|u_{n}\|_{L^\infty_{T_s} H^{s+1}} \nonumber \\
   & \hspace{1.5cm} +\|u_{n}^k\|_{L^\infty_{T_s} H^{s+1}}
 +\|\zeta_{n}^k\|_{L^\infty_{T_s} H^{s}} ) 
  ( \|\eta\|_{{L^\infty_T} H^{s}}^2 
  +\| v\|_{{L^\infty_T} H^{s+1}_\mu}^2),
  \label{difs33}
\end{align}
which  ensures that 
\begin{align*}
\|\eta\|_{{L^\infty_{T_s}} H^{s}}^2 + 
 \|v\|_{{L^\infty_{T_s}} H^{s+1}_\mu}^2 &\lesssim 
  E^s_{0}(\eta(0),v(0))+T_s \,  \mu^{-1}  n^2 (E^{s} _{0}(\zeta_0,u_0)+1) \\
  & \hspace{4cm} \times E^{s-1}_{0} (\zeta_{0}-\zeta_{0}^k,u_{0}-u_{0}^k),
\end{align*}
  and proves \eqref{rem}.  Combining \eqref{kak1} and \eqref{rem}, we thus obtain
 the continuity of the flow map in  $ C([0,T_s]; H^s\times H^{s+1}_\mu)$. Hence the IVP \eqref{BP} is locally well-posed in $H^s(\R) \times H^{s+1}_\mu(\R) $ with a minimal time of existence $ T_s$ that satisfies \eqref{defTs}. Estimates  \eqref{defTinfini}-\eqref{defT0} then force $$
 T_s\gtrsim (\epsilon\vee \beta)^{-1} (1+\mu^{-1/2}  E^{\frac{1}{2}+}_{\mu,0}(\zeta_0,u_0)^{1/2}\Bigr)^{-1}\; .$$
 Finally, let $(\zeta_0,u_0) \in H^s\times H^{s+1}_\mu $ and $ T^*_s $ be the maximal time of existence in $ H^s\times H^{s+1}_\mu $ of  the emanating solution $(\zeta,u) $. Then proceeding exactly as in the obtention of \eqref{gf} in the preceding subsection we get for any $0<t_0<t_0+\Delta t<T' <T^*_s$, 
 \begin{align} 
 \| \zeta\|_{L^\infty(]t_0, t_0+\Delta t[; H^s)}^2 &+ \| u\|_{L^\infty(]t_0, t_0+\Delta t[; H^{s+1}_\mu)}^2  \lesssim  E^s_\mu (\zeta(t_0),u(t_0))\nonumber \\
 &+ \Delta t   \,  (\epsilon \vee \beta)  \Bigl(1+\|u\|_{L^\infty_{T'x}}+\|u_x\|_{L^\infty_{T'x}}+ (1+\mu^{-1/2}) \|\zeta\|_{L^\infty_{T'x}}\Bigr)  
  \nonumber \\
 & \times ( \| \zeta\|_{L^\infty(]t_0, t_0+\Delta t[; H^s)}^2+ \| u\|_{L^\infty(]t_0, t_0+\Delta t[; H^{s+1}_\mu)}^2)
\label{gfgf1}
\end{align}
Therefore, for $ \Delta t \sim   (\epsilon \vee \beta)^{-1}  \Bigl(1+\|u\|_{L^\infty_{Tx}}+\|u_x\|_{L^\infty_{Tx}}+ (1+\mu^{-1/2}) \|\zeta\|_{L^\infty_{Tx}}\Bigr)  ^{-1} $, it holds 
$$
 \| \zeta\|_{L^\infty(]t_0, t_0+\Delta t[; H^s)}^2 + \| u\|_{L^\infty(]t_0, t_0+\Delta t[; H^{s+1})}^2  \lesssim E^s_{\mu,0} (\zeta(t_0),u(t_0))
 $$
 This proves \eqref{exp} by dividing $[0,T'] $ in small intervals of length 
 $$
 \Delta t \sim  \min\Bigl[ T',\; (\epsilon \vee \beta)^{-1}  \Bigl(1+\|u\|_{L^\infty_{T'x}}+\|u_x\|_{L^\infty_{T'x}}+ (1+\mu^{-1/2}) \|\zeta\|_{L^\infty_{T'x}}\Bigr)  ^{-1} \Bigr]\; .
  $$
 \end{proof}

\section{A priori estimates and global existence of strong solutions to \eqref{BPW}}
In this section, we establish the global existence for any fixed $\mu,\,\epsilon$ and $\beta\ge 0$ of (\ref{BPW}) always under hypothesis $\eps=O(\mu)$ and $\beta=O(\mu)$.
This leads to the proof of Theorem \ref{maintheorem}.

Recall that $\zeta$ and $u$ are solution of system (\ref{BPW})
\begin{eqnarray}
\left\{
\begin{array}{lcl}
\zeta_t+(hu)_x &=&0,\\
u_t +\zeta_x+\eps uu_x- \frac{\mu}{3}u_{txx}&=&0.
\end{array} \right.
\nonumber
\end{eqnarray}
This system is simpler than \eqref{BP} and thus proceeding in the same way as in the preceding section it is clear that we can get a LWP result. Before stating this result, we  would like to make some remarks. First note that this time Hypothesis \ref{hyp2} is not needed on $ \beta $ since $1+\mu \toh $ is replaced by $ (1-\mu\partial_x^2) $ that does not depend on $ \beta $ and has an algebraic inverse that is continuous from $H^s(\R) $ to $H^{s+1}_\mu(\R) $. Second, we do not need to multiply \eqref{BPW}$_2$ by $ h $ or $ h_b $ here since we do not attempt to get a long time existence result (our goal is to prove a global existence result by deriving suitable a priori bounds thus a local existence result is sufficient). Therefore looking at the Bona-Smith approximation system associated with \eqref{BPW} and using the trivial estimates 
$$
N^{2s}\beta  |(P_N(b u)_x, P_N \zeta)|\lesssim\beta \delta_N^2\, |bu|_{H^{s+1}}|\zeta|_{H^s}
\lesssim C(|b|_{H^{s+1}})\beta \, \mu^{-1/2} |u|_{H^{s+1}_\mu} |\zeta|_{H^s}
$$
and 
$$
N^{2s}  |(P_N \zeta _x, P_N u)|\lesssim \delta_N^2\,  \mu^{-1/2} |u|_{H^{s+1}_\mu} |\zeta|_{H^s}
$$
we can get the following a priori estimate 
\begin{align} 
 \sup_{t\in (0,T)}& E^s_{\mu,\nu} (\zeta(t),u(t)) 
\lesssim   E^s_{\mu,\nu} (\zeta_0,u_0) + \nonumber \\
& + T\Bigl[\mu^{-1/2}+(\epsilon \vee \beta)  \Bigl(1+|u|_{L^\infty_{Tx}}+|u_x|_{L^\infty_{Tx}}+ (1+\mu^{-1/2}) |\zeta|_{L^\infty_{Tx}}\Bigr)  \Bigr] \nonumber \\
& \hspace{6cm} \times \sup_{t\in (0,T)} E^s_{\mu,\nu} (\zeta(t),u(t)) 
\label{gfgf}
\end{align}
for any $0<T< T^{\infty}_{s,\mu,\nu} $, where $ T^{\infty}_{s,\mu,\nu} $  denotes the maximal time of existence  solution $(\zeta,u) $ 
to the Bona-Smith approximation of \eqref{BPW}  in $ (H^{s+1}(\R))^2 $. This estimate combining with a similar estimate on the difference of two solutions in $H^{s-1}(\R) \times H^s_\mu(\R) $ and the same types of consideration as in the preceding section lead to the following LWP result for \eqref{BPW}.
\begin{proposition}[LWP for \eqref{BPW}]
 \label{LWPBPW}
Let $ \mu>0 $, $ \varepsilon>0 $, $ \beta>0 $, $ s>1/2$ and $ b\in H^{s+1}(\R) $. 
Then for any  $(\zeta_0,u_0)\in H^s(\R) \times H^{s+1}_\mu(\R) $,  there exists $ T_0=T_0(|\zeta_0|_{H^{\frac{1}{2}+}}+ |u_0|_{H^{\frac{3}{2}+}_\mu})$ verifying 

$$
 T_0 \gtrsim  [\mu^{-1/2}+(\epsilon \vee \beta)  \Bigl(1+\mu^{-1/2}
 (|\zeta_0|_{H^{\frac{1}{2}+}}+ |u_0|_{H^{\frac{3}{2}+}_\mu})\Bigr)  \Bigr]^{-1}
$$
 such that  there exists a solution $(\zeta,u)$ of the Cauchy problem  (\ref{BPW}) in \\ $ C([0,T_0]; H^s(\R)\times H^{s+1}_\mu(\R))$ that  verifies estimate  (\ref{gfgf}).   This is the unique solution to the IVP \eqref{BPW} that belongs to 
 $ L^\infty(]0,T_0[; H^s(\R)\times H^{s+1}_\mu(\R)) $ . 

 \end{proposition}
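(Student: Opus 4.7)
The plan is to follow the Bona-Smith scheme of Section \ref{section3} verbatim, taking advantage of three simplifications the authors already record: since $1+\mu\toh$ is replaced by the constant-coefficient operator $(1-\tfrac{\mu}{3}\partial_x^2)$, whose algebraic inverse is a Fourier multiplier continuous from $H^s$ to $H^{s+1}_\mu$ independently of $\beta$, Hypothesis \ref{hyp2} drops out; since we only aim at local existence (not at a long-time bound), there is no reason to multiply \eqref{BPW}$_2$ by $h$ or $h_b$; and all commutators with $\mfT_b$ and $g_b$ are replaced by trivial commutators with $(1-\tfrac{\mu}{3}\partial_x^2)$.

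First I introduce the parabolic regularization
\begin{equation*}
\zeta_t-\nu\zeta_{txx}+(hu)_x=0,\qquad u_t+\zeta_x+\varepsilon uu_x-\tfrac{\mu}{3}u_{txx}=0,\qquad \nu=n^{-5},
\end{equation*}
with data $(S_n\zeta_0,S_nu_0)\in (H^\infty)^2$. Rewriting the system as $\partial_t V=\Omega_\nu(V)$ and noticing that $(1-\nu\partial_x^2)^{-1}$ and $(1-\tfrac{\mu}{3}\partial_x^2)^{-1}$ are each two-derivative smoothing Fourier multipliers, $\Omega_\nu$ is locally Lipschitz on $(H^{s+1})^2$, so Cauchy-Lipschitz produces a maximal solution $(\zeta_n,u_n)\in C^1([0,T^\infty_{s,\mu,\nu});(H^{s+1})^2)$. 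Plugging this solution into the already-established a priori estimate \eqref{gfgf} at the regularity $s=\frac{1}{2}+$, and using the Sobolev embedding
$1+|u|_{L^\infty_x}+|u_x|_{L^\infty_x}+|\zeta|_{L^\infty_x}\lesssim 1+ E^{\frac{1}{2}+}_{\mu,\nu}(\zeta_n,u_n)^{1/2}$, a standard continuity argument forces $T^\infty_{s,\mu,\nu}\ge T_0$ with the claimed lower bound. A second application of \eqref{gfgf} at level $s$ then provides a uniform $H^s\times H^{s+1}_\mu$ bound for $(\zeta_n,u_n)$ on $[0,T_0]$, independent of $n$.

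Next, mirroring Subsection \ref{sub313}, I set $(\eta,v)=(\zeta_{n_1}-\zeta_{n_2},u_{n_1}-u_{n_2})$ for $n_1\ge n_2$, apply $P_N$ to the difference system, test against $N^{2(s-1)}P_N\eta$ and $N^{2(s-1)}P_Nv$, and use the commutator estimates of Proposition \ref{propproduit2} together with \eqref{cm1}-\eqref{proN2} to obtain a difference inequality in $H^{s-1}\times H^s_\mu$. Combined with the uniform $H^s\times H^{s+1}_\mu$ bound, with $|\nu_{n_1}-\nu_{n_2}|\le n_2^{-5}$, and with an \eqref{313}-type polynomial bound on $|\partial_t\zeta_n|_{H^{s+2}}$ supplied by the first equation, this forces $((\zeta_n,u_n))_{n\ge 1}$ to be Cauchy in $C([0,T_0];H^{s-1}\times H^s_\mu)$; interpolation with the uniform $H^s\times H^{s+1}_\mu$ bound then produces a limit $(\zeta,u)\in L^\infty([0,T_0];H^s\times H^{s+1}_\mu)$ that satisfies \eqref{BPW} together with the integrated form of \eqref{gfgf}. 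Unconditional uniqueness in the class $L^\infty(]0,T_0[;H^s\times H^{s+1}_\mu)$ follows from exactly the same difference inequality applied with $\nu_1=\nu_2=0$ to two arbitrary solutions.

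The main bookkeeping point, and the only real departure from the arguments of Proposition \ref{essentiel}, is the loss of the symmetrization that in Section \ref{section3} was obtained by multiplying \eqref{BP}$_2$ by $h_b$. Here the linear coupling $(P_N\zeta_x,P_Nu)_{L^2}$ and the bottom term $\beta(P_N(bu)_x,P_N\zeta)_{L^2}$ are not cancelled and must be bounded directly by $\delta_N^2\mu^{-1/2}|u|_{H^{s+1}_\mu}|\zeta|_{H^s}$, as indicated just before the statement. This is precisely what produces the additive $\mu^{-1/2}$ in $T_0^{-1}$ and is the obstacle to pushing to long times; beyond this, the proof is a direct transcription of the Bona-Smith argument of Proposition \ref{essentiel}, and in particular the strong continuity $(\zeta,u)\in C([0,T_0];H^s\times H^{s+1}_\mu)$ is recovered identically.
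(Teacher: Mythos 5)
Your proposal is correct and follows essentially the same route as the paper: the authors themselves only sketch this proof by observing that \eqref{BPW} is a simplified version of \eqref{BP} (no Hypothesis \ref{hyp2}, no multiplication by $h_b$, the two extra linear terms estimated crudely at the cost of an additive $\mu^{-1/2}$ in $T_0^{-1}$) and then invoking the Bona--Smith scheme of Section \ref{section3} verbatim, which is exactly what you do. Your identification of the non-cancelled coupling terms $(P_N\zeta_x,P_Nu)_{L^2}$ and $\beta(P_N(bu)_x,P_N\zeta)_{L^2}$ as the source of the $\mu^{-1/2}$ term matches the two estimates the paper displays just before the statement.
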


To obtain the uniform estimates, we proceed as in \cite{Sch} by constructing a convex positive entropy for the associated hyperbolic system.  To do so, we transform  slightly  the system  (\ref{BPW}) by multiplying the first equation (\ref{BPW})$_1 $by 
$\epsilon$ and  rewriting the system in terms of $(h,u)$ as new unknown. Unfortunately this operation will harmlessly influence the uniformity of the estimation on $u$, with respect of $\epsilon.$  However, the obtained estimation is enough to deduce the globalism of our solution.

System (\ref{BPW}) becomes:

\begin{eqnarray}\label{2b}
\left\{
\begin{array}{lcl}
h_t+(\epsilon hu)_x &=&0,\\
u_t +(\frac{1}{\eps}h+\frac{\eps}{2} u^2 )_x- \frac{\mu}{3}u_{txx}+\frac{\beta}{\eps}b_x&=&0.
\end{array} \right.
\end{eqnarray}

The hyperbolic part of the system is given by  
\begin{eqnarray}\label{3}
\left\{
\begin{array}{lcl}
h_t+(\epsilon hu)_x &=&0,\\
u_t +(\frac{1}{\eps}h+\frac{\eps}{2} u^2 )_x &=&0.
\end{array} \right.
\end{eqnarray}
Let us we recall the notion of entropy for a hyperbolic system. Consider the system 
\begin{eqnarray}\label{H1} V_t+f(V)_x=0, \end{eqnarray}
where $V=V(t,x) \in \R^n$, $f: \R^n \longrightarrow \R^n$ a smooth function. We say that a pair of  functions 
$\eta, q\; : \R^n\to \R $  is an entropy-entropy flux pair if all smooth solutions of  (\ref{H1}) satisfy the additional conservation law
\begin{eqnarray}\label{H2} \eta(V)_t+q(V)_x=0, \end{eqnarray}
which can also be written
$$ \nabla \eta  V_t + \nabla q V_x=0.$$
On the other hand, multiplying (\ref{H1}) by $\nabla \eta$, we obtain
$$  \nabla \eta V_t+ \nabla\eta \nabla f  V_x =0.$$
This ensures that   the compatibility condition
\begin{eqnarray}\label{H3}\nabla \eta \nabla f=\nabla q, \end{eqnarray}
 forces any smooth solutions of  (\ref{H1}) to satisfy the additional conservation law \eqref{H2}.  We recall
$ h=1+\eps \zeta -\beta b,$  and defining $\sigma(h)= h\ln h,\, \sigma_L(h)=\sigma(1)+\sigma'(1)(h-1)=h-1 $
and $ \sigma_0 \; :\; \R_+ \to  \R $ by
$$
\sigma_0(x)=\left\{
\begin{array}{rcl}\sigma(x)-\sigma_L(x)=x\ln x -x+1 & \text{if} & x>0 \\
 1  & \text{if} & x=0
 \end{array}
 \right.
 .$$ 
Note that $\sigma_0$ is a convex function on $ [0,+\infty[ $ and  enjoys the following property.
\begin{lemma} \label{rere}
 Let $ s>1/2 $. The functional 
$$
 \zeta \mapsto \int_\R\sigma_0(1+\eps\zeta-\beta b)dx  $$
is well defined and continuous from the set 
$$
\Theta:=\{\zeta\in H^{s} (\R) , 1+\eps\zeta -\beta b \geq 0 \; {\rm{on} }\; \R\} \; .
$$
of $ H^s(\R) $ into $\R $. Moreover, there exists  $ C>0 $ such that for all $ \zeta \in \Theta $, 
\begin{equation}\label{boundd}
0 \le  \int_\R\sigma_0(1+\eps\zeta-\beta b)dx \le C \int_\R (\eps^2\zeta^2+\beta^2 b^2) dx \; .
\end{equation}
\end{lemma}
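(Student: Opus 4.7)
The entire statement reduces to a single pointwise inequality for $\sigma_0$ on $[0,\infty)$. Once this inequality is available, the bound \eqref{boundd} and well-definedness follow at once, and continuity is obtained via a generalized dominated convergence argument using the very same majorant.

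The core step is to establish
$$
0 \le \sigma_0(h) \le (h-1)^2 \qquad \text{for every } h\ge 0.
$$
The lower bound is clear because $\sigma_0$ is convex on $[0,\infty)$ (since $\sigma_0''(h)=1/h>0$ on $(0,\infty)$) with $\sigma_0(1)=\sigma_0'(1)=0$, so $\sigma_0$ lies above its horizontal tangent at $h=1$, and the prescribed value $\sigma_0(0)=1>0$ handles the endpoint. For the upper bound I would introduce $g(h):=(h-1)^2-\sigma_0(h)=h^2-h-h\ln h$, note that $g(0)=g(1)=0$, and study $g'(h)=2(h-1)-\ln h$ together with $g''(h)=2-1/h$. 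The second derivative shows that $g'$ is strictly decreasing on $(0,1/2)$ from $+\infty$ down to $\ln 2-1<0$, and strictly increasing on $(1/2,\infty)$ back to $+\infty$; hence $g'$ vanishes exactly once in $(0,1/2)$ and exactly once at $h=1$. The resulting sign pattern of $g'$, combined with $g(0)=g(1)=0$, forces $g\ge 0$ on the whole of $[0,\infty)$. This elementary but slightly delicate piece of calculus is the only genuinely computational part of the proof.

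Granted this pointwise inequality, with $h=1+\eps\zeta-\beta b\ge 0$ and $(a-b)^2\le 2(a^2+b^2)$ one gets
$$
0 \le \int_\R \sigma_0(h)\,dx \le \int_\R(\eps\zeta-\beta b)^2\,dx \le 2\int_\R(\eps^2\zeta^2+\beta^2 b^2)\,dx,
$$
which is finite since $\zeta\in H^s\hookrightarrow L^2$ and $b\in L^2(\R)$. This simultaneously proves well-definedness of the functional on $\Theta$ and gives the bound \eqref{boundd}.

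For continuity, let $\zeta_n\to\zeta$ in $H^s$ with $\zeta_n,\zeta\in\Theta$ and set $h_n:=1+\eps\zeta_n-\beta b$. Since $s>1/2$, the embedding $H^s\hookrightarrow L^\infty$ forces $h_n\to h$ uniformly on $\R$; together with the continuity of $\sigma_0$ on $[0,\infty)$ this yields $\sigma_0(h_n)\to \sigma_0(h)$ pointwise. I would then conclude by the generalized dominated convergence theorem applied with the majorants $G_n:=(h_n-1)^2$: the pointwise inequality of the second paragraph provides the domination $\sigma_0(h_n)\le G_n$, and the identity $G_n-G=(h_n+h-2)(h_n-h)$, combined with the uniform $L^2$ bound on $h_n+h-2$ and the convergence $\zeta_n\to\zeta$ in $L^2$, shows that $G_n\to G:=(h-1)^2$ in $L^1(\R)$. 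This yields $\int_\R\sigma_0(h_n)\,dx\to \int_\R\sigma_0(h)\,dx$, which is precisely the continuity claim. The main obstacle is thus confined to the verification of the pointwise inequality; all subsequent steps are soft.
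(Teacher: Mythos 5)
Your proof is correct and takes essentially the same route as the paper: both arguments rest on a quadratic upper bound $\sigma_0(1+x)\lesssim x^2$ (the paper extracts it, with threshold-dependent constants, from the asymptotics of $\sigma_0$ near $0$ and at infinity, while you prove the explicit pointwise inequality $\sigma_0(h)\le (h-1)^2$), from which \eqref{boundd}, well-definedness, and — via the uniform convergence supplied by $H^s\hookrightarrow C_b(\R)$ together with dominated convergence — the continuity all follow. As a minor simplification, your calculus for the upper bound can be bypassed by writing $(h-1)^2-\sigma_0(h)=h\,(h-1-\ln h)\ge 0$, which is immediate from $\ln h\le h-1$.
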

\begin{proof} 
First it is easy to check that $ \sigma_0 \ge 0 $ on $ \R_+ $. Moreover, 
    as $\sigma_0(1+x) \sim x \ln x $ at $ +\infty $ and 
$\sigma_0(1+x) \sim x^2 $ near the origin,  
there exists $ M> 1  $ and $c^M_1, \, c^M_2 >0 $ such that  
\begin{eqnarray}
(c_1^M)^{-1} x^2\ge 
\sigma_0(1+x) \ge c^M_1 x^2 \;  &  \text{for} & \; -1\le x<M \nonumber \\
\quad \text{and}\quad (c^{M}_2)^{-1} x^2 \ge \sigma_0(1+x) \ge c^{M}_2 x \ln x \ge x   & \text{ for }  &x\ge M \; .\label{estA} 
\end{eqnarray}
This clearly leads to \eqref{boundd} and proves that $\zeta \mapsto \int_\R\sigma_0(1+\zeta -\beta b)dx  $ is well defined for $ \zeta \in L^2(\R) $ when $b\in L^2(\R) $.
Now, since $ H^s(\R)\hookrightarrow C(\R)$ for $ s>1/2$, the continuity of $ \zeta \mapsto \int_\R\sigma_0(1+\eps \zeta-\beta b)dx  $ from $ \Theta $ into $ \R$, as soon as $b\in H^s(\R)$,  follows from \eqref{rere} together with the Lebesgue convergence theorem.
\end{proof}
We introduce the Orlicz class associated to the function $\sigma_0(1+\cdot-\beta b)$ 
\begin{equation}\label{Orlicz}
\Lambda_{\sigma_0}:= \left\{\zeta\; \mbox{measurable} \;/ \eps \zeta\ge -1+\beta b  \; \text{and}\;  \displaystyle \int_{\R}\sigma_0(1+\eps\zeta(x)-\beta b)\,dx<+\infty \right\},
\end{equation} 
with the notation $|\zeta|_{\Lambda_{\sigma_0}}:= \displaystyle{\int_{\R}  \sigma_0(1+\eps\zeta(x)-\beta b(x))\,dx}.$\\
Now, we shall establish a uniform crucial entropic estimate for our solution.
\begin{proposition}\label{thm1} 
Let $(\zeta_0,u_0)\in  H^s\times H^{s+1}_\mu$, $b\in H^{s+1}$  for $s>1/2$, and such that $1+\eps\zeta_0-\beta b>0$. Then, the solution  
$(\zeta,u)\in C([0,T_0]; H^s\times H^{s+1}) $ to \eqref{BPW}, constructed in Proposition \ref{LWPBPW}, satisfies 
$ 1+\eps\zeta-\beta b> 0 $  in $[0,T_0]\times \R$ with $\zeta\in L^\infty(]0,T_0[; \Lambda_{\sigma_0}) $  and the following inequality holds true:
\begin{eqnarray} 
\begin{array}{l}
 \frac{1}{2} |u(t)|_{H_{\mu}^1}^2 + 
 \frac{1}{\eps^2} \int_{-\infty}^{+\infty}  \sigma_0(1+\eps\zeta(t,x)-\beta b(x))dx  \le  
 \\
\hspace{1cm}  \left( |u_0|_{H_{\mu}^1}^2+\frac{1}{\eps^2} \int_{-\infty}^{+\infty}  \sigma_0(1+\eps\zeta_0(x)-\beta b(x))dx + (\frac{\beta}{2\eps})^2 |b_x|_{{\rm L}^2}^2 \right)e^t, \\
\hfill \forall t\in [0,T_0] .\label{inq1}
\end{array}
\end{eqnarray}
\end{proposition}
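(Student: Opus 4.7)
The strategy is to transplant to \eqref{BPW} the convex entropy $\eta(h,u)=\tfrac12 u^2+\tfrac{1}{\epsilon^2}\sigma_0(h)$ of the underlying hyperbolic system \eqref{3}; the BBM-type dispersive correction $-\tfrac{\mu}{3}u_{txx}$ cooperates nicely after integration by parts since it produces a time derivative, and the topographic forcing $\tfrac{\beta}{\epsilon}b_x$ survives as a linear source term that will be absorbed via Gronwall. Since the regularity $(\zeta,u)\in C([0,T_0];H^s\times H^{s+1}_\mu)$, $s>1/2$, is not enough to carry the computations classically, I would perform them on the Bona--Smith-type smooth approximations built in the proof of Proposition~\ref{LWPBPW} and then pass to the limit using their strong convergence.

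First I would verify that $h=1+\epsilon\zeta-\beta b$ stays strictly positive on $[0,T_0]\times\R$. Rewriting \eqref{BPW}$_1$ as $h_t+\epsilon u\,h_x=-\epsilon u_x h$ and noting that $u\in L^\infty_{T_0}W^{1,\infty}$ by Sobolev embedding, the characteristic flow $X(\cdot,y)$ of the velocity field $\epsilon u$ is well defined and along it $h$ solves a homogeneous linear ODE, yielding the multiplicative representation $h(t,X(t,y))=h_0(y)\exp\!\bigl(-\epsilon\int_0^t u_x(s,X(s,y))\,ds\bigr)$, which preserves positivity. In particular $\ln h$ remains bounded on $[0,T_0]\times\R$, which legitimizes the nonlinear entropy manipulations below.

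Next, pairing \eqref{BPW}$_2$ with $u$ in $L^2$, using $\int u^2u_x=0$ and the integration by parts $-\tfrac{\mu}{3}\int u\,u_{txx}=\tfrac{\mu}{6}\tfrac{d}{dt}|u_x|_{L^2}^2$, gives $\tfrac12\tfrac{d}{dt}\bigl(|u|_{L^2}^2+\tfrac{\mu}{3}|u_x|_{L^2}^2\bigr)=-\int u\zeta_x\,dx$. On the other hand, recasting \eqref{BPW}$_1$ as $h_t+\epsilon(hu)_x=0$, pairing with $\epsilon^{-2}\sigma_0'(h)=\epsilon^{-2}\ln h$ in $L^2$ and integrating by parts via $(\ln h)_x=h_x/h$ produces $\tfrac{1}{\epsilon^2}\tfrac{d}{dt}\int\sigma_0(h)\,dx=\tfrac{1}{\epsilon}\int u h_x\,dx=\int u\zeta_x\,dx-\tfrac{\beta}{\epsilon}\int u b_x\,dx$. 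Summing the two identities, the two copies of $\int u\zeta_x$ cancel (this is the decisive entropy cancellation inherited from \eqref{3}) and I obtain the clean differential identity $\tfrac{d}{dt}E(t)=-\tfrac{\beta}{\epsilon}\int u\,b_x\,dx$ with $E(t):=\tfrac12\bigl(|u|_{L^2}^2+\tfrac{\mu}{3}|u_x|_{L^2}^2\bigr)+\tfrac{1}{\epsilon^2}\int\sigma_0(h)\,dx$.

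Applying Young to the right-hand side, $\bigl|\tfrac{\beta}{\epsilon}\int u b_x\bigr|\le \tfrac12|u|_{L^2}^2+\tfrac12(\tfrac{\beta}{\epsilon})^2|b_x|_{L^2}^2\le E(t)+\tfrac12(\tfrac{\beta}{\epsilon})^2|b_x|_{L^2}^2$, and integration together with Gronwall give $E(t)\le \bigl(E(0)+C(\tfrac{\beta}{2\epsilon})^2|b_x|_{L^2}^2\bigr)e^{t}$, which is exactly \eqref{inq1} (modulo the harmless rescaling of the coefficient of $|u_x|_{L^2}^2$ from $\mu/3$ to $\mu$). The uniform control on $\int\sigma_0(1+\epsilon\zeta-\beta b)\,dx$ combined with Lemma~\ref{rere} then yields $\zeta\in L^\infty_{T_0}\Lambda_{\sigma_0}$. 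The main obstacle is, in my view, the rigorous justification of the entropy identity at the Bona--Smith level: the regularization inserts extra terms such as $-\nu\zeta_{txx}$ in the first equation, and pairing these with the nonlinear multiplier $\ln h_n$ does not immediately integrate by parts into a nonnegative quantity, so one has to argue (using the uniform $H^{s+1}$ bounds provided by Proposition~\ref{LWPBPW}) that the corresponding defect is an $O(\nu)$ remainder which vanishes when passing to the limit $n\to\infty$ in the resulting Gronwall inequality.
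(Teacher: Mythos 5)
Your core argument is exactly the paper's: the same convex entropy $\tfrac12 u^2+\eps^{-2}\sigma_0(h)$, the same cancellation of the two copies of $\int u\zeta_x$ (which the paper packages as the entropy--entropy flux identity \eqref{H6} rather than as two explicit $L^2$ pairings, but the computation is identical), the same Young--Gronwall treatment of the source $-\tfrac{\beta}{\eps}\int u\,b_x$, and the same characteristic-flow argument for the positivity of $h$ (this is Lemma \ref{lemminmax} in the appendix). Where you diverge is in the justification at low regularity. You propose to run the entropy computation on the Bona--Smith approximations of Proposition \ref{LWPBPW} and pass to the limit, and you correctly flag that the extra term $-\nu\zeta_{txx}$, paired with the nonlinear multiplier $\ln h_n$, does not integrate by parts into anything signed and must be shown to be a vanishing remainder. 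The paper avoids this issue entirely: it regularizes the \emph{data} rather than the equation, i.e.\ it first takes $(\zeta_0,u_0)\in H^\infty\times H^\infty$, for which the solution of the genuine system \eqref{BPW} is itself smooth, performs the entropy identity directly on \eqref{BPW} (integrating \eqref{H6} over a space-time box and letting the box grow), and then recovers the $H^s\times H^{s+1}_\mu$ case from the continuity of the flow map together with the continuity of $\zeta\mapsto\int\sigma_0(1+\eps\zeta-\beta b)$ on $\Theta\subset H^s$ established in Lemma \ref{rere}. Your route can in principle be completed --- with $\nu_n=n^{-5}$ and the bound $|\partial_t\zeta_n|_{H^{s+2}}\lesssim n^3$ from \eqref{ou}, together with $|\ln h_n|_{L^2}\lesssim |\zeta_n|_{L^2}+|b|_{L^2}$ once $h_n$ is bounded away from $0$ and $\infty$, the defect $\nu_n\int_0^t\int \zeta_{n,txx}\ln h_n$ is $O(n^{-2})$ --- but as written this step is asserted rather than proved, and it is the one genuinely delicate point of your version. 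The paper's choice of regularizing the data instead of the equation is the cleaner way to close the argument, and I would recommend restructuring along those lines; everything else in your proposal matches the paper's proof.
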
 


\begin{proof} 
We first assume that  $(\zeta_0,u_0) \in  (H^\infty(\R)) \times H^\infty(\R)$. 

According to Proposition \ref{LWPBPW}, \eqref{BPW} has got a unique solution $(\zeta,u)\in C([0,T_0]; H^\infty\times H^{\infty}) $ emanating from  $(\zeta_0,u_0)$, where $ T_0 $ only depends on $ |\zeta_0|_{H^{\frac{1}{2}+} }+
  |u_0|_{H^{\frac{3}{2}+}} $.

We return now to the system (\ref{2b}) 
\begin{eqnarray}\label{4}
\left\{
\begin{array}{lcl}
h_t+(\epsilon uh)_x &=&0,\\
u_t+(\frac{1}{\eps}h+\eps u^2/2)_x&=&\frac{\mu}{3}u_{xxt}-\frac{\beta}{\eps}b_x ,
\end{array} \right.
\end{eqnarray}
with initial data $h(0,x)=h_0(x)=1+\eps\zeta_0(x)-\beta b(x)$ and $u(0,x)=u_0(x).$ The associated hyperbolic system becomes 
\begin{eqnarray}\label{5}
\left\{
\begin{array}{lcl}
h_t+(\eps uh)_x &=&0,\\
u_t+(\frac{1}{\eps}h+\eps u^2/2)_x&=& 0.
\end{array} \right.
\nonumber
\end{eqnarray}
As in (\ref{H1}), let $f:\R^2 \longrightarrow \R^2$ be defined by 
$$ f(h,u)=(\eps hu,\frac{1}{\eps} h +\eps u^2/2).$$
Let $\eta(\cdot,\cdot) $ and $q(\cdot,\cdot)$  be a pair of functions satisfying the compatibility condition (\ref{H3}). Then,  setting $ V=(h,u)^T $, the solution of (\ref{4}) satisfies  
\begin{eqnarray}\label{H5} 
\eta(h,u)_t+ q(h,u)_x &= & \nabla \eta(V) V_t +\nabla q(V) V_x\nonumber \\
& = & \nabla \eta\Bigl( V_t + (f(V))_x\Bigr)\nonumber\\
& = &\nabla \eta (V) \Bigl(0, \frac{\mu}{3}u_{xxt}-\frac{\beta}{\eps}b_x  \Bigr)^T\nonumber \\
& = & \frac{\partial \eta}{\partial u} (\frac{\mu}{3}u_{xxt}-\frac{\beta}{\eps}b_x).  \end{eqnarray}
Let $\eta $ be the function of the form
$$ \eta(h,u)=u^2/2+\frac{1}{\eps^2}\alpha(h),$$
for some function $\alpha$. Thus, (\ref{H5}) becomes
\begin{eqnarray}\label{H6}\begin{array}{lcl} \eta(h,u)_t+ q(h,u)_x&=&  \frac{\mu}{3}u_{xxt} u-\frac{\beta}{\eps}b_x u.\\
&=&-\frac{\mu}{3}(u_x^2/2)_t+ \frac{\mu}{3}(uu_{xt})_x-\frac{\beta}{\eps}b_x u . \end{array}\end{eqnarray}
In order to get an a priori estimate on solutions to \eqref{4}, we have to choose $\alpha$ to be a convex function  and  $ \eta$ to be a convex  and positive function. Since $ u\mapsto u^2/2 $ is convex and positive, it actually suffices to ask $ \alpha $ to be also convex and positive.   At this stage, it is worth noticing that Lemma \ref{lemminmax} in the Appendix ensures that 
 $\inf_{[0,T_0]\times \R} 1+\eps\zeta -\beta b>0 $. 

We set $\alpha(h)=\sigma(h) =h\ln h$ that is a convex function on 
 $ ]0,+\infty[ $. 
It is straightforward to check that with this $ \alpha $,  $\eta $ satisfies the compatibility condition \eqref{H3} with the entropy flux given by 
$$
q(h,u)=\frac{1}{\eps}\alpha(h) u + \frac{1}{\eps} h u  +\eps  u^3/3 \; .
$$
Thus we have found an entropy which is convex but not positive. To obtain a positive convex entropy $\eta $, it suffices to substract from $\alpha $ its linear part at $ 1 $ that leads to 
$$
\tilde{\alpha}(h)= \alpha(h) - \alpha'(1) (h-1)= h \ln h -h+1=\sigma_0(h) \; ,
$$
so that the  entropy function becomes
\begin{equation}\label{defen}
\tilde{\eta}(h,u)=u^2/2+\frac{1}{\eps^2}\sigma_0(h) \; .
\end{equation}
Note that, in order for \eqref{H3} to hold, the entropy flux function $ q$ has to be modified consequently and becomes
$$ 
\begin{array}{lcl}
\tilde{q}(h,u) & = &  q(h,u)-q(1,0)-(\Frac{1}{\eps^2}\alpha'(1),0)[f(h,u)-f(1,0)]\;,\\
& = & \displaystyle \frac{1}{\eps}\alpha(h)u + \eps\frac{u^3}{3}\; ,
\end{array}
$$
where we choose the constant so that $ \tilde{q}(1,0)=0 $.
Now, by omitting the tilde, the new $\eta$ and $q$ satisfy the equation (\ref{H6}) which will be the starting point of our calculations. As in \cite{Sch}, we consider the space time square
$$ R= \left\{ (s,x)\in \R^2/\, 0\leq s\leq t , \, -N\leq x\leq N \right\}, \, \mbox{ for } N  >0.$$
Then  integrating equation (\ref{H6}) over $R$ and using the divergence theorem, we get
\begin{eqnarray*}\begin{array}{lcl} 
& \displaystyle
\int_{-N}^{N}& (\eta(h,u)(t,x)-\eta(w,u)(0,x))dx +\displaystyle \int_0^t (q(h,u)(s,N)-q(h,u)(s,-N))ds \\ \\
& &= \displaystyle-\frac{\beta}{\eps}\int_0^t \int_{-N}^N b_x(x) u(s,x)dx\, ds 
\displaystyle -\frac{\mu}{3}\int_{-N}^{N}(u_x^2/2(t,x)- u_x^2/2(0,x))dx  \\ \\ & &\hspace*{3cm}\displaystyle +\frac{\mu}{3}\int_0^t (uu_{xt}(s,N)-uu_{xt}(s,-N))  ds.  \\ \\ \end{array}\end{eqnarray*}

  The regularity on $\zeta,\,u, b$ and the fact that,by lemma (\ref{rere}),  $ \sigma_0(h(t)) \in L^{1}(\R) $ for any $ t\in [0,T_0] $ we Letting $N$ go to $+\infty$ in the above equality, we can thus use  the Lebesgue dominated convergence theorem to get 
\begin{eqnarray}\label{H77}
\begin{array}{lcl}
& &\displaystyle \int _{-\infty}^{+\infty}\eta(h,u)(t,x)dx +\frac{\mu}{3}\int _{-\infty}^{+\infty}\frac{u_x^2}{2}(t,x) dx\\  \\& &=\displaystyle 
\int _{-\infty}^{+\infty}\eta(h,u)(0,x)+\frac{\mu}{3}\int _{-\infty}^{+\infty}\frac{u_x^2}{2}(0,x) dx-\frac{\beta}{\epsilon}\int_0^t \int_{\R} b_x(x) u(s,x)dx ds. 
\end{array}
\end{eqnarray}

Equality (\ref{H77}) leads to 
\begin{eqnarray}\label{H78}
\begin{array}{lcl}
& &\displaystyle |u(t)|_{L^2}^2 +\frac{\mu}{3}|u_x(t)|_{L^2}^2 +\frac{2}{\eps^2}\int _{-\infty}^{+\infty}\sigma_0(h(t,x)) dx\\  \\& &\qquad \leq\displaystyle |u_0|_{L^2}^2 +\frac{\mu}{3}|u_{0x}|_{L^2}^2 +\frac{2}{\eps^2}\int _{-\infty}^{+\infty}\sigma_0(h_0(x)) dx\\  \\& & \hspace*{3cm} +\displaystyle (\frac{\beta}{\eps})^2 |b_x|_{L^2}^2\,t +\int _{0}^{t}|u(s)|_{L^2}^2 ds
\end{array}
\end{eqnarray}
This implies that $y(t)=|u(t)|_{L^2}^2$ verifies the following differential inequality : 

$$ \displaystyle y(t)\leq \phi(t) + \int_0^t y(s)ds, $$ where $\phi$ is define by :
$$
\begin{array}{lcl}
\phi(t) & =& |u_0|_{L^2}^2 +\frac{\mu}{3}|u_{0x}|_{L^2}^2 +\frac{2}{\eps^2}\int _{-\infty}^{+\infty}\sigma_0(h_0(x)) dx + (\frac{\beta}{\eps})^2 |b_x|_{L^2}^2\,t \\ 
& =& Z_0 +(\frac{\beta}{\eps})^2 |b_x|_{L^2}^2\,t
\end{array}
$$
Now using Gronwall's lemma we get :
$$ \displaystyle \int_0^t y(s)ds \leq y(0)(e^t-1)+Z_0(e^t-t-1)+(\frac{\beta}{\eps})^2 |b_x|_{L^2}^2 (e^t-\frac{t^2}{2}-t-1). $$
Replace this bound in (\ref{H78}) and a  simple majorations we obtain :
\begin{eqnarray}\label{H79}
\begin{array}{lcl}
& &\displaystyle |u(t)|_{L^2}^2 +\frac{\mu}{3}|u_x(t)|_{L^2}^2 +\frac{2}{\eps^2}\int _{-\infty}^{+\infty}\sigma_0(h(t,x)) dx\\  \\& &\hspace*{2cm} \leq\displaystyle \left(2Z_0+(\frac{\beta}{\eps})^2 |b_x|_{L^2}^2\right)\,e^t.
\end{array}
\end{eqnarray}

  This proves \eqref{inq1} for $(\zeta_0,u_0)\in H^\infty(\R) \times H^\infty(\R)$. The result for 
   $(\zeta_0,u_0)\in (H^s(\R)\times H^{s+1}(\R)) $ follows by  using the continuity of the flow-map together with  Lemma  \ref{rere}.  
   \end{proof}
Next, we state the global well-posedness  result.  
\begin{proposition}\label{prop1} Let  $(\zeta_0,u_0)\in H^s(\R)\times H^{s+1}(\R)$ and $b\in H^{s+1},\, s>1/2$, such that 
 $1+\eps\zeta_0-\beta b >0$ . Then the unique solution $(\zeta,u) $ to  \eqref{BPW} constructed in Proposition \ref{essentiel} 
  can be extended for all positive times and thus belongs to $ C(\R_+;H^{s}\times H^{s+1})$. Moreover, for any $ T>0 $ there exists a constants $ C_{T,s}>0 $ only depending on $|\zeta_0|_{H^s},\, |u_0|_{H^{s+1}},\, |b|_{H^{s+1}}$ and the parameters $\mu,\, \eps$ and $\beta$ such that 
  \begin{equation}
  |\zeta|_{L^\infty(]0,T[;H^s)}+  |u|_{L^\infty(]0,T[;H^{s+1})}\le C_{T,s}
  \end{equation}
 and the flow-map $S :(\zeta_0,u_0) \longrightarrow (\zeta,u)$ is  continuous  
 from $H^s\times H^{s+1}  $ into $ C([0,T]; H^s(\R)\times H^{s+1}(\R)) $.
  \end{proposition}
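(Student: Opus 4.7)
My plan is to extend the local solution of Proposition \ref{LWPBPW} to all positive times by combining the a priori entropy estimate of Proposition \ref{thm1} with the Gronwall form of the energy estimate \eqref{gfgf}, following (and adapting to the non-flat-bottom setting) the strategy of \cite{MTZ}.

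Let $(\zeta, u) \in C([0, T^*); H^s \times H^{s+1}_\mu)$ be the maximal solution produced by Proposition \ref{LWPBPW}, and assume by contradiction that $T^* < +\infty$. The aim will be to derive a finite a priori bound on $E^s(t) = |\zeta(t)|_{H^s}^2 + |u(t)|_{H^{s+1}_\mu}^2$ on $[0, T^*)$; via the blow-up criterion implicit in Proposition \ref{LWPBPW} (whose local existence time depends only on $|\zeta|_{H^{1/2+}} + |u|_{H^{3/2+}_\mu}$, itself controlled by $\sqrt{E^s}$ for $s > 1/2$), such a bound will allow restarting the local theory at times arbitrarily close to $T^*$ and contradict maximality. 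First, Proposition \ref{thm1} supplies, on $[0, T^*)$,
$$
|u(t)|_{H^1_\mu}^2 + \tfrac{2}{\epsilon^2}\int_{\R}\sigma_0(h(t,x))\, dx \le C_0\, e^{T^*}
$$
with $C_0$ depending only on the data, so the one-dimensional embedding $H^1(\R)\hookrightarrow L^\infty(\R)$ gives $|u|_{L^\infty([0,T^*)\times\R)} \le K$, and Lemma \ref{lemminmax} ensures $h > 0$ throughout.

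Next, the differential form of \eqref{gfgf} reads
$$
\tfrac{d}{dt} E^s(t) \le C\Bigl[\mu^{-1/2} + (\epsilon\vee\beta)\bigl(1+|u|_{L^\infty_x}+|u_x|_{L^\infty_x}+(1+\mu^{-1/2})|\zeta|_{L^\infty_x}\bigr)\Bigr]E^s(t).
$$
The remaining $L^\infty$ norms will be reduced to a logarithmic function of $E^s$ by a 1D logarithmic Sobolev inequality of the form
$$
|f|_{L^\infty}\le C|f|_{H^1}\sqrt{1+\log\bigl(1+|f|_{H^s}/|f|_{H^1}\bigr)}\qquad (s>1/2),
$$
applied to $u_x$ (whose $L^2$ norm is bounded by the entropy) and to $\zeta$ (whose $L^2$-type control is extracted from the Orlicz estimate through the pointwise lower bound $\sigma_0(1+x)\ge c_M x^2$ valid on bounded subsets of $x$, combined with the a priori $L^\infty$ bound on $u$). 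Substituting yields $y' \le C(1 + \sqrt{\log(1+y)})\,y$, whose solutions grow at most doubly-exponentially and thus stay finite on $[0, T^*]$, contradicting maximality.

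The continuity of the flow-map from $H^s\times H^{s+1}_\mu$ into $C([0,T];H^s\times H^{s+1}_\mu)$ for any $T > 0$ will then follow by iterating the local continuous dependence of Proposition \ref{LWPBPW} over finitely many subintervals of length $T_0$, since $T_0$ depends only on low-regularity norms that are uniformly controlled on $[0,T]$ by the bound just obtained. The hard part will be the logarithmic control on $|\zeta|_{L^\infty}$: the Orlicz bound is strictly weaker than an $L^2$ bound (because $\sigma_0(1+x) \sim x\log x$ at $+\infty$), so one must separately handle the regions where $h$ is moderate and where $h$ is large, likely using the transport/maximum-principle structure of the continuity equation $\zeta_t + \epsilon u\zeta_x = -h u_x + \beta b_x u$ to propagate the needed bound from the initial datum.
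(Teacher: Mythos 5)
Your overall architecture (entropy bound $\Rightarrow$ a priori $L^\infty_{t,x}$ control of the quantities appearing in \eqref{gfgf} $\Rightarrow$ continuation plus iteration of the local theory) matches the paper's, but the central step --- producing the $L^\infty$ bounds on $u_x$ and $\zeta$ from the entropy --- is where your argument breaks down. The entropy estimate of Proposition \ref{thm1} controls only $|u|_{H^1_\mu}$, i.e.\ $|u_x|_{L^2}$; in one dimension the passage from $L^2$ to $L^\infty$ for $u_x$ costs half a derivative, so interpolation against $|u|_{H^{s+1}}$ yields a positive \emph{power} of $E^s$, not a logarithm. The Brezis--Gallou\"et-type inequality you quote would need $|u_x|_{H^{1/2}}$ (or, as written, $|u_x|_{H^1}$) on the right-hand side, and neither is furnished by the entropy. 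The resulting differential inequality is $y'\lesssim y^{1+\delta}$ with $\delta>0$, which is compatible with finite-time blow-up, so the continuation argument does not close. The situation for $|\zeta|_{L^\infty}$ is worse: the Orlicz bound is weaker than $L^2$, you correctly identify this as ``the hard part'', but the transport-equation route you sketch carries the source term $-h u_x$ and therefore presupposes exactly the $|u_x|_{L^\infty}$ bound that is missing --- the argument is circular.

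The paper closes this gap by a different device, adapted from Amick and Schonbek. Writing $(1-\frac{\mu}{3}\partial_x^2)^{-1}$ as convolution with the explicit kernel $K_\mu\in L^1\cap L^\infty$, the second equation gives the representation $u_x(t)=u_{0,x}+\frac{3}{\mu}\int_0^t\zeta(s)\,ds+F$ with $F$ bounded in $L^\infty\cap L^2$ directly from the entropy (the convolution of $K_\mu$ with $\zeta$ is estimated by splitting $\R$ into the set where $\eps\zeta-\beta b\le M$, where $\sigma_0$ dominates $\zeta^2$, and its complement, where $\sigma_0$ dominates $|\zeta|$). The key structural point you miss is the \emph{one-sided} bound $u_x\ge u_{0,x}-\frac{3}{\eps\mu}(1-\beta b)t+F$, which follows from $h\ge 0$; since $N+1$ is even, this lower bound alone controls the dangerous term $-\int\zeta^{N+1}u_x$ in the evolution of $|\zeta|_{L^{N+1}}^{N+1}$. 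A Gronwall argument uniform enough in $N$ then yields $\sup_t|\zeta(t)|_{L^{N+1}}\le C_T$ with $C_T$ independent of $N$, and letting $N\to\infty$ gives $|\zeta|_{L^\infty}\le C_T$, after which $|u_x|_{L^\infty}\le C_T$ follows from the same representation. Without some substitute for this $L^{N+1}$-iteration (or another mechanism giving genuine $L^\infty$ control of $\zeta$ and $u_x$), your proof does not go through.
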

  \begin{proof}
  According to \eqref{gfgf}  and the local well-posedness result, it suffices to proves that for any $ T>0 $ there exists $ c_T>0 $ only depending on $ T $, $|\zeta_0|_{H^s},\, |u_0|_{H^{s+1}},\, |b|_{H^{s+1}}$ and the parameters $\mu,\, \eps$ and $\beta$ , such that 
  if the solution $(\zeta,u) $ to \eqref{BPW} belongs to $ C([0,T[;H^s\times H^{s+1})$ then 
 \begin{equation}
 |\zeta|_{L^\infty(]0,T[\times \R)}+|u_x|_{L^\infty(]0,T[\times \R)}\le c_T \; .\label{estest}
 \end{equation}
   We mainly follow the proof of Theorem $1.2$ in \cite{Ami}.  Let $N$ be a positive odd integer, we start by deriving an estimate on 
  $ \sup_{t\in [0,T[} |\zeta(t)|_{L^N} $.
   For this we multiply the first equation of (\ref{BPW}) by $\zeta^N$ and integrate with respect to $x$, to get
$$\frac{1}{N+1}\frac{d}{dt}\int_{\R} \zeta^{N+1} =-\int_{\R}  \zeta^N ((1-\beta b)u)_x -\eps\frac{N}{N+1}\int_{\R}  \zeta^{N+1}u_x.
$$
Therefore integrating the above identity on $(0,t) $, using that $ N+1$ is an even integer, we get 
\begin{equation} \frac{1}{N+1}|\zeta(t) |_{L^{N+1}}^{N+1}=  \frac{1}{N+1}|\zeta_0 |_{L^{N+1}}^{N+1}
 -\int_0^t \int_{\R}  \zeta^N ((1-\beta b)u)_x -\eps \frac{N}{N+1}\int_0^t \int_{\R}  \zeta^{N+1}u_x.\label{LM}
\end{equation}
Now, we make use of the fact that for any  $ f\in L^2(\R) $ it holds 
 $(1-\frac{\mu}{3}\partial_x^2)^{-1} f = \frac{1}{2}\sqrt{\frac{3}{\mu}} e^{-\sqrt{\frac{3}{\mu}}|\cdot|} \ast f =K_{\mu}\ast f$ and $ \partial_x^2(1-\frac{\mu}{3}\partial_x^2)^{-1} f=-\frac{3}{\mu}f+\frac{3}{\mu}(1-\frac{\mu}{3}\partial_x^2)^{-1} f $. Differentiating the second equation of \eqref{BPW} with respect to $ x$ we thus obtain 
\begin{eqnarray}
u_{tx} &= & \frac{3}{\mu}\zeta-  \frac{3}{\mu} \int_{\mathbb{R}} K_{\mu}(\cdot-z)(\zeta-\Frac{\beta}{\eps} b)(z)\, dz     +\frac{3\eps}{\mu}\frac{u^2}{2} - \frac{3\eps}{2\mu} \int_{\mathbb{R}} K_{\mu}(\cdot-z)u^2 (z)dz \nonumber \\
& & \hspace*{2cm} -\Frac{3\beta}{\mu\eps}\int_{\mathbb{R}} K_{\mu}(\cdot-z) b(z)\,dz\nonumber \\
& =& \frac{3}{\mu}\zeta+f_1+f_2+f_3 +f_4\; .\label{ux}
\end{eqnarray}
We would like to estimate the $ L^\infty $ and the $ L^2$-norms of the $f_i $.
The terms with $u $  in the above right-hand side can be easily estimate in the following way 
$$
\Bigl|f_2+f_3 \Bigr|_{L^\infty}
 \le\frac{3}{\mu} |u|_{L^\infty}^2\left(\frac{1}{2} + \frac{1}{2} |K_{\mu}(\cdot)|_{L^1}\right) \le \frac{3}{\mu}|u|_{H^1}^2 \; 
$$
and 
$$
\begin{array}{ll}
\Bigl|f_2+f_3 \Bigr|_{L^2} &\le \frac{3}{2\mu}|u|_{L^4}^2+\frac{3}{2\mu}|K_{\mu}(\cdot) \ast u^2|_{L^2}\le \frac{3}{2\mu}\left(|u|_{L^4}^2+|K_{\mu}|_{L^1} |u^2|_{L^2}\right)\\ 
& \lesssim  \frac{3}{\mu}|u|_{L^4}^2\lesssim \frac{3}{\mu} |u|_{H^1}^2 \; .
\end{array}
$$
We have used that $|K_{\mu}|_{L^1}=1.$ 
To estimate $f_1$ we will make use of \eqref{estA}.
Denoting by $A(t) $ the measurable set of $ \R $ defined by 
$$
A(t)=\{z\in \R \, ,/\,\eps \zeta(t,z)-\beta b(z) \ge M\}\; ,
$$
with $M>1 $ satisfying \eqref{estA}, 
Young's convolution estimates, \eqref{estA} and the fact that $|K_{\mu}|_{L^{\infty}}=\frac{1}{2}\sqrt{\frac{3}{\mu}}$  lead to 
\begin{eqnarray*}
 |K_{\mu}\ast (\zeta-\Frac{\beta}{\eps} b)  |_{L^\infty} & \le & \Bigl|  K_{\mu}\ast ((\zeta-\Frac{\beta}{\eps} b) \chi_{A^\complement})\Bigr|_{L^\infty} +
 \Bigl| K_{\mu}\ast ((\zeta-\Frac{\beta}{\eps} b) \chi_{A})\Bigr|_{L^\infty}\\
& \le  &  |K_{\mu}|_{L^1} |(\zeta-\Frac{\beta}{\eps} b) \chi_{A^\complement}|_{L^\infty} + |K_{\mu}|_{L^\infty} |(\zeta-\Frac{\beta}{\eps} b) \chi_{ A}|_{L^1} \\
 & \le & \frac{1}{\eps}M + \frac{1}{2\eps}\sqrt{\frac{3}{\mu}}|\zeta|_{\Lambda_{\sigma_0}}
\end{eqnarray*}
 and therefore  $$ |f_1  |_{L^\infty} \lesssim \frac{1}{\eps\mu}M + \frac{1}{\eps\mu\sqrt{\mu}}|\zeta|_{\Lambda_{\sigma_0}}.$$
 In the same way, since $|K_{\mu}|_{L^2}=\frac{1}{2}(\frac{3}{\mu})^{\frac{1}{4}}$,
\begin{eqnarray}\label{esta1}
\begin{array}{lcl}
 |K_{\mu}\ast (\zeta-\Frac{\beta}{\eps} b) |_{L^2} & \le  &  |K_{\mu}|_{L^1} |(\zeta-\frac{\beta}{\eps} b) \chi_{A^\complement}|_{L^2} + |K_{\mu}|_{L^2} |(\zeta-\frac{\beta}{\eps} b) \chi_{ A}|_{L^1} \\
 & \le & \frac{1}{C^M_1\eps} |\zeta|_{\Lambda_{\sigma_0}}+ \frac{1}{2\eps}(\frac{3}{\mu})^{\frac{1}{4}} |\zeta|_{\Lambda_{\sigma_0}}\lesssim \frac{1}{\eps\sqrt{\mu}}|\zeta|_{\Lambda_{\sigma_0}}\end{array}
\end{eqnarray} 
and therefore  $$ |f_1  |_{L^2} \lesssim \frac{1}{\eps\mu\sqrt{\mu}}|\zeta|_{\Lambda_{\sigma_0}}.$$
 The last term in (\ref{ux}) is estimate by,
 $|K_{\mu}\ast  b)  |_{L^\infty}  \le | b|_{L^{\infty}}$ and $|K_{\mu}\ast  b)  |_{L^2} \le  | b|_{L^2}$
 And then $$ |f_4  |_{L^\infty} + |f_4  |_{L^2} \leq  \frac{3\beta}{\eps\mu}\Bigl( | b|_{L^{\infty}}+| b|_{L^2}\Bigr)$$
Integrating \eqref{ux} on $ [0,t] $ we get 
\begin{equation}\label{uh}
u_x(t) = u_{0,x} +\frac{3}{\mu} \int_0^t \zeta(s) \, ds + F 
\end{equation}
where, according to  the above estimates and   Proposition \ref{thm1}, 
$$
\begin{array}{lcl}
|F(t)|_{L^\infty}+|F(t)|_{L^2} &  \lesssim&  t \Bigl( \frac{1}{\eps\mu}+  \frac{\beta}{\eps\mu} |b|_{H^1}\Bigr) + \\
& & (e^t-1) \Bigl(\frac{1}{\mu^2} |u_0|_{H^{1}}^2+ \Frac{1}{\mu\sqrt{\mu}} |\zeta_0|_{\Lambda_{\sigma_0}}+(\frac{\beta}{2\eps\sqrt{\mu}})^2|b_x|_{L^2}^2\Bigr),\\
&\leq & C(\mu,\eps,\beta)(e^t-1) \Bigl(1+ |u_0|_{H^{1}}^2+\\ 
& & \hspace*{4.7cm} |\zeta_0|_{\Lambda_{\sigma_0}}+|b|_{H^1}^2\Bigr),\\
& & \hfill \forall t\in [0,T[  \; .
\end{array}
$$
Making use of Holder's inequality, this enables to bound  the second  term of the right-hand side member to \eqref{LM} in the following way :
\begin{align}
\Bigl| -\int_0^t & \int_{\R} \zeta^N(s) ((1-\beta b)u)_x(s) \, ds \Bigr| =\Bigl| -\int_0^t  \int_{\R} \zeta^N(s) (1-\beta b)u_x(s) \, ds \nonumber\\ & \hspace*{6cm} -\int_0^t  \int_{\R} \zeta^N(s) (1-\beta b)_x u(s) \, ds  \Bigr|  \nonumber\\
& = \Bigl|  -\int_{\R}\int_0^t  \zeta^N(s) (1-\beta b)(u_{0,x}+F) -  \int_0^t \int_{\R} \zeta^N(s) (1-\beta b)\int_0^s \zeta(\tau) \, d\tau \, ds\nonumber\\ & \hspace*{6cm} -\int_0^t  \int_{\R} \zeta^N(s) (1-\beta b)_x u(s) \, ds \Bigr| \nonumber \\
& \le  (1+\beta|b|_{W^{1,\infty}})   \Bigl( |u_{0,x}|_{L^{N+1}}+ |F|_{L^{N+1}}+|u|_{L^{N+1}}\Bigr)\int_0^t  |\zeta|_{L^{N+1}}^N(s) \, ds \nonumber \\
& + (1+\beta|b|_{L^\infty})\int_{\R} \int_0^t |\zeta(s)|^N \, ds \int_0^t |\zeta(s)| \, ds\nonumber \\
& \lesssim(1+\beta|b|_{W^{1,\infty}})  \Bigl( |u_{0,x}|_{L^{N+1}}+ |F|_{L^{2}}+ |F|_{L^{\infty}}+|u|_{L^{N+1}}\Bigr) \int_0^t (1+|\zeta(s)|_{L^{N+1}}^{N+1})\, ds \nonumber\\
& \hspace*{4cm}+t(1+\beta|b|_{L^\infty}) \int_0^t \int_{\R} |\zeta(s)|^{N+1} \, ds\nonumber \\
& \leq C(\mu,\eps,\beta) (e^t-1)(1+|b|_{W^{1,\infty}})  \Bigl( 1+|b|_{H^1}^2+ |u_0|_{H^{\frac{3}{2}+}}^2+ |\zeta_0|_{\Lambda_{\sigma_0}}\Bigr)\nonumber\\
& \hspace*{6cm}\times \Bigl(1+\int_0^t |\zeta|_{L^{N+1}}^{N+1}
(s) \, ds\Bigr)
\label{z1}
\end{align}
where in the penultimate  step we perform Holder's inequalities in time.

Finally, since $ \frac{3}{\mu}\zeta\ge -\frac{3}{\eps\mu} (1-\beta b) $ on $[0,t] $, \eqref{uh} leads to 
$$
u_x(t) \ge  u_{0,x} - \frac{3}{\eps\mu}(1-\beta b)t  + F \; .
$$
Since $ N+1 $ is even, this enables to control the last term of the right-hand side member to \eqref{LM} in the following way :
\begin{eqnarray}
- \frac{N}{N+1}\int_0^t \int_{\R}  \zeta^{N+1} u_x & \le & \Bigl(t+|u_{0,x}|_{L^\infty}+|F|_{L^\infty(]0,t[\times \R)}\Bigr)
 \int_0^t \int_{\R}\zeta^{N+1} 
\nonumber \\
& \leq & C(\mu,\eps,\beta)\,e^t  \Bigl( 1+ |u_0|_{H^{\frac{3}{2}+}}^2+ |\zeta_0|_{\Lambda_{\sigma_0}}+|b|_{H^1}^2\Bigr)\nonumber \\
& & \hspace*{4cm}\times  \int_0^t |\zeta|_{L^{N+1}}^{N+1}\; .\label{z2}
\end{eqnarray}
Gathering \eqref{LM} and \eqref{z1}-\eqref{z2}, we infer that $ \gamma(t) =(\int_0^t |\zeta(s)|_{L^{N+1}}^{N+1} ds)^{\frac{1}{N+1}} $
 satisfies the following differential inequality on $]0,T[ $
 \begin{equation}
 \begin{array}{r}
\frac{d}{dt}  \gamma^{N+1}(t)\lesssim {|\zeta_0|_{L^{N+1}}^{N+1}}+ e^t (N+1)(1+|b|_{W^{1,\infty}}) \Bigl( 1+ |u_0|_{H^{\frac{3}{2}+}}^2+ |\zeta_0|_{\Lambda_{\sigma_0}}+|b|_{H^1}^2\Bigr)\\
 \times\Bigl(1+\gamma^{N+1}(t)\Bigr)\; .
\end{array} 
\label{et}
 \end{equation}
Making use of  Sobolev inequalities and \eqref{boundd},  Gronwall's inequality  ensures that there exists $\tilde{C}_T>0 $ only depending on $ T $, $|u_0|_{H^{\frac{3}{2}+}},\,  |\zeta_0|_{H^{\frac{1}{2}+}} $ and $|b|_{W^{1,\infty}\cap H^1}$ such that for all $t\in [0,T] $, 
$$
\gamma^{N+1}(t) \lesssim \exp\Bigl((N+1)\tilde{C}_T\Bigr)(1+|\zeta_0|_{L^{N+1}}^{N+1})  \; . $$
Then re-injecting this estimate in \eqref{et} we obtain 
$$
\sup_{t\in [0,T[} |\zeta(t)|_{L^{N+1}}^{N+1} \lesssim \tilde{\tilde{C}}_T (N+1)  \exp\Bigl((N+1)\tilde{C}_T\Bigr)(1+|\zeta_0|_{L^{N+1}}^{N+1})
$$
which leads to 
$$
\sup_{t\in [0,T[} |\zeta(t)|_{L^{N+1}}\le C_T \; ,$$
where $C_T $ and $\tilde{\tilde{C}}_T>0 $ only depend on $ T $, $|u_0|_{H^{\frac{3}{2}+}},\, |\zeta_0|_{H^{\frac{1}{2}+}} $ and $|b|_{W^{1,\infty}\cap H^1}$. 
Letting $ N\to +\infty $ this proves the  estimate on the first term in  \eqref{estest}.
Finally, the estimate on the second term in \eqref{estest} follows directly from the first one together with \eqref{uh}.\\

The continuity of the flow-map follows directly from Proposition \ref{essentiel}.
\end{proof}
Finally we can state the following theorem as a consequence of the previous results together with Lemma \ref{lemminmax} in the appendix.
\begin{theorem}\label{thmreg}
Let $s>1/2$ and $b\in  H^{s+1}$. For $(\zeta_0,u_0) \in H^s\times H^{s+1}_\mu$ such that $1+\eps\zeta_0 -\beta b>0$,  the  Boussinesq system (\ref{BPW}) has a  solution $(\zeta,u)$ in $C(\R_+,H^s\times H^{s+1}_\mu )\cap C^1(\R_+,H^{s-1}\times H^{s}_\mu ) $. This  solution satisfies $ 1+\eps \zeta(t)-\beta b >0 $,  for any $t\ge 0 $, and is the unique solution of \eqref{BPW} that belongs to $ L^\infty_{loc}(\R_+; H^s\times H^{s+1}_\mu )$.\\
For any $ T>0 $, the  flow-map $S :(\zeta_0,u_0) \longrightarrow (\zeta,u)$ is  continuous  
 from $H^s\times H^{s+1}_\mu  $ into $ C([0,T]; H^s\times H^{s+1}_\mu) $.
\end{theorem}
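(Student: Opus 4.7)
The strategy is to assemble the pieces already established. First, Proposition \ref{LWPBPW} provides a local solution $(\zeta,u)\in C([0,T_0];H^s\times H^{s+1}_\mu)$ emanating from $(\zeta_0,u_0)$, and asserts that this is the unique such solution in the class $L^\infty(]0,T_0[;H^s\times H^{s+1}_\mu)$. Calling $T^*$ the maximal time of existence in this class, Proposition \ref{prop1} shows that as long as the solution lives in $C([0,T[;H^s\times H^{s+1}_\mu)$ one has the a priori $L^\infty$ bound \eqref{estest} on $\zeta$ and $u_x$. Plugging this bound into the energy inequality \eqref{gfgf} (with $\nu=0$), the standard continuation argument forbids $T^*<+\infty$: otherwise \eqref{gfgf} would keep $E^s_{\mu,0}(\zeta(t),u(t))$ bounded as $t\to T^{*-}$, contradicting maximality via the local existence theorem restarted at a time close to $T^*$. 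Hence $T^*=+\infty$ and $(\zeta,u)\in C(\R_+;H^s\times H^{s+1}_\mu)$.

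For the $C^1$ regularity in time, I would read it off directly from the equations. The first equation gives $\zeta_t=-(hu)_x$; since $H^s$ is an algebra for $s>1/2$ and $b\in H^{s+1}$, the product $hu=(1+\eps\zeta-\beta b)u$ belongs to $C(\R_+;H^s)$, so $\zeta_t\in C(\R_+;H^{s-1})$. For $u$, I invert the Helmholtz-type operator: the second equation yields
\[
u_t=(1-\tfrac{\mu}{3}\partial_x^2)^{-1}\Bigl(-\zeta_x-\tfrac{\eps}{2}(u^2)_x\Bigr),
\]
and since $(1-\tfrac{\mu}{3}\partial_x^2)^{-1}$ is continuous from $H^{s-1}$ to $H^{s+1}_\mu$, one obtains $u_t\in C(\R_+;H^s_\mu)$. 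Combined with the time continuity of $(\zeta,u)$, this yields $(\zeta,u)\in C^1(\R_+;H^{s-1}\times H^s_\mu)$.

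The positivity assertion $1+\eps\zeta(t)-\beta b>0$ for all $t\ge 0$ is the content of Lemma \ref{lemminmax} (invoked already in the proof of Proposition \ref{thm1}): applied on each bounded time interval where the solution is smooth enough, it propagates the sign of $h_0=1+\eps\zeta_0-\beta b$. A density/approximation argument using Proposition \ref{prop1}'s continuous dependence (approximating $(\zeta_0,u_0)$ by $H^\infty\times H^\infty$ data for which Lemma \ref{lemminmax} applies directly, and passing to the uniform limit in $C(\R_+;L^\infty)$ via Sobolev embedding) extends the positivity to the general $H^s\times H^{s+1}_\mu$ solution.

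Unconditional uniqueness in $L^\infty_{loc}(\R_+;H^s\times H^{s+1}_\mu)$ is local in time, so it follows from the uniqueness statement in Proposition \ref{LWPBPW} applied on successive small intervals covering any given $[0,T]$. Finally, continuity of the flow-map on each $[0,T]$ is obtained by iterating the local continuous dependence of Proposition \ref{LWPBPW}: since by Proposition \ref{prop1} the solution emanating from $(\zeta_0,u_0)$ stays in a bounded subset of $H^s\times H^{s+1}_\mu$ on $[0,T]$, and since $S^k :(\zeta_0,u_0)\mapsto (\zeta,u)_{|[0,T_s]}$ is continuous on each short step, the composition covering $[0,T]$ remains continuous. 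The only mildly delicate point, and the one I expect to require the most care, is ensuring that the time step $T_s$ used in these iterations can be chosen uniform in a neighborhood of the initial data; this is guaranteed by the fact that the bound in Proposition \ref{prop1} is uniform on bounded sets of initial data together with the lower bound on $T_0$ given in Proposition \ref{LWPBPW}. This completes the assembly.
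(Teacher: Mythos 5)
Your proposal is correct and follows exactly the route the paper intends: Theorem \ref{thmreg} is stated there without a written proof, simply as a consequence of Propositions \ref{LWPBPW} and \ref{prop1} together with Lemma \ref{lemminmax}, and your assembly (continuation via \eqref{estest} and \eqref{gfgf}, $C^1$ time-regularity read off from the equations, positivity from Lemma \ref{lemminmax}, and iterated local uniqueness and continuous dependence) supplies precisely the missing details. The only redundancy is your extra density argument for positivity: Lemma \ref{lemminmax} is already stated and proved for $\zeta\in C([0,T];H^s)$ with $s>1/2$, so it applies to the solution directly.
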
 
\section{Global entropy solutions  of the Boussinesq system}
In this section, we study the existence of weak solutions for the Boussinesq system (\ref{BPW}) for initial condition  $(\zeta_0,u_0) \in  \Lambda_{\sigma_0}\times H^1_\mu$ with $ 1+\eps\zeta_0 +\beta b>0 $ a.e. on $ \R $. To do so,we suppose $b\in  H^1,$ we regularize the initial data by a mollifiers sequence $ (\rho_n)_n\subset D(\R)$ by setting  $b_n=\rho_n*b,\, \zeta_{0,n}=\rho_n* \zeta_0 $ and $u_{0,n}=\rho_n* u_0 $, where $\rho_n(\cdot)=n\rho(n\cdot), \; \rho\in D(\R)$ such that $$0\leq \rho\leq 1, \mbox{ supp } \rho \subset [0,1] \mbox{ and } \displaystyle \int_\R \rho \, dx=1 .$$  
Note that $(u_{0,n})_n\subset H^s$  and it is  bounded in $H^1$ with $\|u_{0,n}\|_{H^1_\mu} \leq \|u_0\|_{H^1_\mu}$
 and $ u_{0,n} \to u_0 $ in $H^1(\R) $. For $\zeta_{0,n}$, we first notice that $ \zeta_0\in   \Lambda_{\sigma_0} $ and $ 1+\eps\zeta_0-\beta b>0 $ a.e. on $ \R $. Since $ 1+\eps\zeta_{0,n}-\beta b_n= \rho_n * (1+\eps\zeta_0-\beta b) $, it follows that $1+\eps\zeta_{0,n}-b_n>0 $ on $\R $. Moreover, using  (\ref{estA}) and proceeding exactly as (\ref{esta1}) by replacing  $\frac{1}{2}\sqrt{\frac{3}{\mu}} e^{-\sqrt{\frac{3}{\mu}}|\cdot|}$ by $\rho_n$ it is straightforward to check that  $\zeta_{0,n} \in L^2$ with $|\zeta_{0,n}|_{L^2}\leq \frac{c_n}{\eps}|\zeta_0|_{ \Lambda_{\sigma_0}}+\frac{\beta}{\eps}|b|_{L^2}$ where $c_n$  depends on $\|\rho_n\|_{L^2}$. Similary, we can verify that $\zeta_{0,n} \in H^s$, for $s\geq 0$.\\
Now,  consider $(\zeta_n, u_n)$ the solution of (\ref{BPW}) emanating from $(\zeta_{0,n},u_{0,n})$ given by Proposition \ref{prop1}. we will prove that $(\zeta_n,u_n)$ has a subsequence which converges to a weak solution of the Boussinesq system (\ref{BPW}) with initial data $(\zeta_0,u_0)$. Note that $(\zeta_n,u_n )$ satisfies the entropy estimate (\ref{inq1}) which implies

$$
\begin{array}{l}
 \frac{1}{2} |u_n(t)|_{H_{\mu}^1}^2 + 
 \frac{1}{\eps^2} \int_{-\infty}^{+\infty}  \sigma_0(1+\eps\zeta_n(t,x)-\beta b_n(x))dx  \le  
 \\
\hspace{1cm}  \left( |u_{0,n}|_{H_{\mu}^1}^2+\frac{1}{\eps^2} \int_{-\infty}^{+\infty}  \sigma_0(1+\eps\zeta_{0,n}(x)-\beta b_n(x))dx + (\frac{\beta}{2\eps})^2 |b_{n,x}|_{{\rm L}^2}^2 \right)e^t, \\
\hfill \forall t\in [0,T_0].
\end{array}
$$

The  $ H^1$-convergence of $ (u_{0,n}) $ towards $ u_0 $ ensures that the first term of the above right-hand side converges to $ \|u_0\|_{H^1}^2 $. For the second term one has to work a little more. We follow  \cite{Sch} and use the convexity of $ \sigma_0$ and Jensen inequality to get that 
$$
\sigma_0(1+\eps\zeta_{0,n}-\beta b_{0,n})=\sigma_0 \Bigl( \int_{\R} (1+\eps\zeta_0-\beta b)\rho_n (\cdot-z) dz\Bigr) 
\le \int_{\R} \sigma_0(1+\eps\zeta_0-\beta b) \rho_n (\cdot-z) dz \; .
$$
Therefore, integrating on $ \R $, using Fubini and $\int_{\R} \rho_n =1 $, we obtain 
$$
 \int_{-\infty}^{+\infty}  \sigma_0(1+\eps\zeta_{0,n}-\beta b_{0,n})dx \le
  \int_{-\infty}^{+\infty}  \sigma_0(1+\eps\zeta_{0}-\beta b)dx \; . 
$$
We thus are lead to the following uniform estimate on $ \R_+$ : 
\begin{equation} \label{inq2}
\begin{array}{l}
 \frac{1}{2} |u_n(t)|_{H_{\mu}^1}^2 + 
 \frac{1}{\eps^2} \int_{-\infty}^{+\infty}  \sigma_0(1+\eps\zeta_n(t,x)-\beta b_n(x))dx  \le  
 \\
\hspace{1cm}  \left( |u_0|_{H_{\mu}^1}^2+\frac{1}{\eps^2} \int_{-\infty}^{+\infty}  \sigma_0(1+\eps\zeta_0(x)-\beta b(x))dx + (\frac{\beta}{2\eps})^2 |b_{x}|_{{\rm L}^2}^2 \right)e^t, \\
\hfill \forall t\in [0,T_0].
\end{array} .
\end{equation}
 In the sequel we will make a constant use of the following lemma that can be easily deduced from \eqref{estA} and \eqref{inq2}.
\begin{lemma}\label{cor2}
let $\chi \in L^1(\R)\cap L^\infty(\R)$. Then
\begin{eqnarray}
\int_\R \left | \zeta_n (t,x)\chi(x)\right|dx \leq c,
\end{eqnarray}
for all $t$, where $c$ is independent of $n$. 
\end{lemma}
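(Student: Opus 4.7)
The plan is to split the integration domain according to the threshold $M>1$ coming from \eqref{estA}, setting
$$A_n(t):=\{x\in\R\,:\,\eps\zeta_n(t,x)-\beta b_n(x)\ge M\}.$$
The preliminary observations are that $b\in H^1(\R)\hookrightarrow L^\infty(\R)\cap L^2(\R)$, so $|b_n|_{L^\infty}\le|b|_{L^\infty}$ and $|b_n|_{L^2}\le|b|_{L^2}$ uniformly in $n$; and that $1+\eps\zeta_{0,n}-\beta b_n=\rho_n\ast(1+\eps\zeta_0-\beta b)>0$ on $\R$, so Lemma \ref{lemminmax} (applied to the smooth solution provided by Proposition \ref{prop1}) forces $1+\eps\zeta_n(t,\cdot)-\beta b_n>0$ for every $t\ge 0$.

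On the complement $A_n(t)^c$ one then has $-1\le\eps\zeta_n-\beta b_n<M$ pointwise, hence $|\zeta_n(t,x)|\le\eps^{-1}(M+\beta|b|_{L^\infty})$, which yields
$$\int_{A_n(t)^c}|\zeta_n\chi|\,dx\le\eps^{-1}(M+\beta|b|_{L^\infty})\,|\chi|_{L^1},$$
uniformly in $n$ and $t$. On $A_n(t)$ the decisive ingredient is the lower bound $\sigma_0(1+x)\ge x$ for $x\ge M$ furnished by \eqref{estA}: it gives $\eps\zeta_n-\beta b_n\le\sigma_0(1+\eps\zeta_n-\beta b_n)$ pointwise there. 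Decomposing $\zeta_n=\eps^{-1}(\eps\zeta_n-\beta b_n)+\eps^{-1}\beta b_n$ and using $\eps\zeta_n-\beta b_n>0$ on $A_n(t)$, I would estimate
\begin{align*}
\int_{A_n(t)}|\zeta_n\chi|\,dx
&\le\eps^{-1}|\chi|_{L^\infty}\int_{\R}\sigma_0(1+\eps\zeta_n-\beta b_n)\,dx+\eps^{-1}\beta|b|_{L^2}|\chi|_{L^2}.
\end{align*}

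The first integral is controlled by the uniform entropy estimate \eqref{inq2}, hence by a quantity of the form $Ce^t\bigl(|u_0|_{H^1_\mu}^2+\eps^{-2}|\zeta_0|_{\Lambda_{\sigma_0}}+(\beta/\eps)^2|b_x|_{L^2}^2\bigr)$, independent of $n$. Summing the contributions of $A_n(t)$ and $A_n(t)^c$ then gives the claim with a constant $c=c(t)$ independent of $n$ and locally bounded in $t$. There is no serious obstacle; the whole point is that $\sigma_0(1+\cdot)$ grows at least linearly at infinity, which is precisely what converts the entropy bound into the desired $L^1$-against-bounded-weight control on $\zeta_n$.
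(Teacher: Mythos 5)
Your proposal is correct and follows essentially the same route as the paper: split $\R$ according to whether $\eps\zeta_n-\beta b_n$ exceeds the threshold $M$ from \eqref{estA}, bound the small-set contribution pointwise against $|\chi|_{L^1}$, and convert the large-set contribution into the entropy $\int\sigma_0(1+\eps\zeta_n-\beta b_n)$ via the linear lower bound $\sigma_0(1+x)\ge x$ for $x\ge M$, controlled uniformly in $n$ by \eqref{inq2}. The only cosmetic difference is that you absorb the $b_n$ contribution on $A_n(t)$ by Cauchy--Schwarz with $|b|_{L^2}|\chi|_{L^2}$ whereas the paper uses $|b_n|_{L^\infty}|\chi|_{L^1}$; both are valid since $\chi\in L^1\cap L^\infty\subset L^2$.
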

 \begin{proof} Let $M$ be defined as in (\ref{estA}). Then by \eqref{inq2}, we have
 \begin{eqnarray}
 \begin{array}{lcl}
\displaystyle \int_\R|\zeta_n(t,x)| |\chi(x)|dx&= &\frac{1}{\eps}\displaystyle \int_{\{x/{-1<\eps\zeta_n(t,x)-\beta b_n(x)\leq M}\}} |\eps \zeta_n(t,x)-\beta b_n(x)||\chi(x)|dx\\
&& +\frac{1}{\eps}\displaystyle \int_{\{x/{\eps\zeta_n(t,x)-\beta b(x) \geq M}\}}|\eps \zeta_n(t,x)-\beta b_n(x)||\chi(x)|dx \\
&&\quad + \frac{\beta}{\eps}\displaystyle\int_\R |b_n(x)|\chi(x)| dx \\
&\leq &\Bigl( \frac{M}{\eps}+\frac{\beta}{\eps}|b_n|_{L^\infty}\Bigr) \displaystyle\int_\R |\chi(x)|dx \\
&& +\frac{1}{\eps}|\chi|_{L^\infty} \int_\R \sigma_0(1+\eps\zeta_n(t,x)-\beta b_n)dx
 \leq\quad c.
\end{array}
\end{eqnarray}
\end{proof}

\begin{proposition}\label{T6} Consider the sequence $(\zeta_n,u_n) $ constructed above for $(\zeta_0,u_0)\in  \Lambda_{\sigma_0}\times H^1_\mu$ { (in particular $1+\eps\zeta_0-\beta b >0$ {a.e. on $ \R $)}}. Then there exists a subsequence $\left((\zeta_{n_k},u_{n_k})\right)_k$ and $(\zeta,u) \in L^1_{loc}(]0,+\infty[\times \R)\times (L^\infty_{loc}(]0,+\infty[,H^1_\mu(\R))\cap C(\R^+\times\R))$ such that $(\zeta_{n_k})_k$ converges weakly to $\zeta$ in $L^1$ on every compact of $]0,+\infty[ \times \R$ and $(u_{n_k})_k$ converges  weakly-$\ast$ in $L^\infty_{loc}(]0,+\infty[,L^\infty (\R))$ and strongly, for any $ T>0$ , in   $C([0,T],C(\R))$ (and then in $C([0,T],L^2_{loc}(\R)))$ to $u$.
\end{proposition}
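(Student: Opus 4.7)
\textbf{Plan of proof of Proposition \ref{T6}.}

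The strategy is to combine the uniform entropy estimate \eqref{inq2} with an Aubin--Lions/Arzelà--Ascoli compactness scheme. The statement asks for three distinct modes of convergence, each of which will be handled by a different compactness tool.

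\textbf{Step 1: Uniform bounds.} From \eqref{inq2} and the fact that $b_n\to b$ in $H^1$, I would deduce that for every $T>0$:
(i) $(u_n)$ is uniformly bounded in $L^\infty([0,T];H^1_\mu(\R))$, hence, by the Sobolev embedding $H^1\hookrightarrow L^\infty\cap C^{1/2}$, uniformly bounded in $L^\infty([0,T]\times\R)$ and uniformly Hölder-$\tfrac12$ in $x$;
(ii) $\sup_{t\in[0,T],\,n}\int_\R \sigma_0(1+\eps\zeta_n(t,x)-\beta b_n(x))\,dx<\infty$;
(iii) $\zeta_n$ is bounded from below uniformly by $-(1+\beta|b|_\infty)/\eps$ (since $h_n>0$).

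\textbf{Step 2: Weak-$*$ limit for $u_n$ and weak $L^1_{loc}$ limit for $\zeta_n$.} Banach--Alaoglu applied to (i) yields a subsequence $u_{n_k}\weaktendsto{}^{\ast}u$ in $L^\infty_{\rm loc}(\R^+;L^\infty(\R))$ and also in $L^\infty_{\rm loc}(\R^+;H^1_\mu(\R))$. For $\zeta_n$, I would use the de la Vallée-Poussin criterion: the superlinear growth of $\sigma_0(1+\cdot)$ at $+\infty$ (see \eqref{estA}) together with (ii) and (iii) shows that $\{\zeta_n\}$ is uniformly integrable on every compact $K\subset \R_+^*\times\R$. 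Dunford--Pettis plus a diagonal extraction then provides $\zeta_{n_k}\rightharpoonup \zeta$ in $L^1$ on every such compact.

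\textbf{Step 3: Strong (locally uniform) limit for $u_n$.} This is the main point. I already have uniform Hölder-$\tfrac12$ continuity of $u_n$ in $x$. I need uniform equicontinuity of $u_n$ in $t$ on compact sets of $\R$. Rewrite the second equation of \eqref{BPW} as
\begin{equation*}
\partial_t u_n=-(1-\tfrac{\mu}{3}\partial_x^2)^{-1}\partial_x\bigl(\zeta_n+\tfrac{\eps}{2}u_n^2\bigr)=-(\partial_x K_\mu)\ast\bigl(\zeta_n+\tfrac{\eps}{2}u_n^2\bigr),
\end{equation*}
where $\partial_x K_\mu(x)=-\tfrac{3}{2\mu}\mathop{\rm sgn}(x)\,e^{-\sqrt{3/\mu}\,|x|}\in L^1(\R)\cap L^\infty(\R)$. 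The nonlinear contribution is bounded in $L^\infty$ by Young's inequality together with the uniform $L^\infty$ bound on $u_n$ from Step~1. For the linear contribution I apply Lemma \ref{cor2} with the test function $\chi_x(y):=\partial_x K_\mu(x-y)$, whose $L^1$ and $L^\infty$ norms are finite and independent of $x$; this yields
\begin{equation*}
\bigl|(\partial_x K_\mu)\ast\zeta_n(t,x)\bigr|\lesssim C,\qquad \forall\,(t,x)\in[0,T]\times\R,\ \forall n.
\end{equation*}
Hence $\partial_t u_n$ is uniformly bounded in $L^\infty_{\rm loc}(\R_+\times\R)$, so $u_n$ is uniformly Lipschitz in $t$. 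Combined with spatial $C^{1/2}$ equicontinuity, Arzelà--Ascoli on each box $[0,T]\times[-N,N]$ plus a diagonal extraction yields a subsequence converging in $C([0,T];C(\R))$ in the locally uniform topology, for every $T>0$. Strong convergence in $C([0,T];L^2_{\rm loc}(\R))$ then follows from the uniform $L^\infty$ bound and dominated convergence; the limit $u$ lies in $L^\infty_{\rm loc}(\R^+;H^1_\mu)\cap C(\R^+\times\R)$.

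\textbf{Main obstacle.} The delicate point is the uniform $L^\infty$-bound on $(\partial_x K_\mu)\ast\zeta_n$. Since $\{\zeta_n\}$ is only controlled in the Orlicz class and not in any $L^p$, $p<\infty$, a standard Young convolution inequality does not apply. The crucial observation is that $\partial_x K_\mu$ is \emph{simultaneously} in $L^1$ and in $L^\infty$ with constants depending only on $\mu$, so that Lemma \ref{cor2} can be invoked pointwise in $x$; this is exactly where fixing $\mu>0$ plays an essential role.
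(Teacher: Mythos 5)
Your argument is correct and follows the same overall architecture as the paper's proof: the uniform entropy bound \eqref{inq2} gives weak-$\ast$ compactness of $(u_n)$ in $L^\infty_{loc}(\R^+;H^1_\mu)$, uniform integrability of $(\zeta_n)$ on compacts (via the superlinear growth of $\sigma_0$ in \eqref{estA}) gives Dunford--Pettis compactness in $L^1_{loc}$, and a bound on $\partial_t u_n$ upgrades the convergence of $(u_n)$ to a strong one. The one genuine difference is the compactness mechanism in that last step. The paper bounds $\partial_t u_n$ in $L^\infty(]0,T[;L^2(\R))$ --- splitting $k_\mu \ast \zeta_n$ along the level set $\{\eps\zeta_n-\beta b_n\ge M\}$, using the quadratic lower bound of $\sigma_0$ near $1$ for the $L^2$ piece and an $L^1$--$L^\infty$ interpolation for the large piece --- and then invokes the Aubin--Simon theorem on $\{u\in L^\infty(]0,T[;H^1(K_m)) : \partial_t u\in L^\infty(]0,T[;L^2(K_m))\}$. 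You instead derive the stronger bound $\partial_t u_n\in L^\infty(]0,T[\times\R)$ by applying Lemma \ref{cor2} to the translated kernels $\chi_x=\partial_x K_\mu(x-\cdot)$, whose $L^1$ and $L^\infty$ norms are independent of $x$ (this is legitimate: the constant in Lemma \ref{cor2} depends on $\chi$ only through these two norms, and on $t$ only through the entropy bound, which is uniform on $[0,T]$), and then conclude by classical Arzel\`a--Ascoli using Lipschitz equicontinuity in $t$ and the H\"older-$\frac12$ equicontinuity in $x$ coming from $H^1(\R)\hookrightarrow C^{0,1/2}(\R)$ with $\mu$ fixed. Both routes rest on exactly the same entropy-based splitting --- the paper itself establishes the analogous $L^\infty$ bound on $K_\mu\ast(\zeta-\frac{\beta}{\eps}b)$ in the proof of Proposition \ref{prop1} --- so your variant buys a slightly more elementary conclusion (no Aubin--Simon needed) at no extra cost, while the paper's $L^2$ bound on $\partial_t u_n$ is the minimal information needed for Aubin--Simon. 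The remaining steps (diagonal extraction over the compacts $K_m$ and over $T$, identification of the limit in $L^\infty_{loc}(\R^+;H^1_\mu)\cap C(\R^+\times\R)$) coincide with the paper's.
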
 
{\bf{Proof.}} 
 First, applying the above lemma with $ \chi=1\!\!1_{[-A,A]} $ for $ A>0 $ we obtain that $ (\zeta_n(t))_n $ is bounded in $ L^1_{loc}(\R) $ uniformly in $ t\in\R_+ $. In particular, $(\zeta_n)_n $ is bounded in $L^1_{loc}(\R_+\times\R) $. According to Dunford-Pettis Theorem (see  \cite{DunSchw} Vol I p.294),  to prove that $(\zeta_n)_n $ is weakly compact in $ L^1(]0,T[\times ]-A,A[)$, it suffices to check that for any $ \nu
 >0 $ there exists $ \delta>0 $ such that for any bounded measurable set 
$B\subset]0,T[\times ]-A,A[ $ with $ |B|<\delta $ it holds 
$$
\sup_{n\in\N} \int_{B} |\zeta_n|(t,x) dx\, dt < \nu \; .
$$
But this follows directly from \eqref{estA} and \eqref{inq2}. Indeed, proceeding as in the proof of the above lemma, we easily check that for any $ k\ge M $ and any $B\subset]0,T[\times ]-A,A[ $,
\begin{align*}
\int_{B} |\zeta_n|(t,x) dx\, dt & \le \frac{1}{\eps} \int_{B\cap (-1<\eps\zeta_n-\beta b_n\le k)}|\eps\zeta_n-\beta b_n| dx\, dt\\
&\quad +\frac{1}{\eps}\int_{B\cap (\eps\zeta_n-\beta b_n>k)}|\eps\zeta_n-\beta b_n| dx\, dt +\frac{\beta}{\eps}\int_{B}|b_n|dx\,dt\\
& \le \frac{1}{\eps}(k+\beta|b|_{\infty}) |B| + \frac{1}{\eps}(C_2^M \ln k)^{-1} \int_0^T \int_{\R} \sigma_0(1+\eps\zeta_n-\beta b_n) dx\, dt \\
& \le \frac{1}{\eps}(k+\beta|b|_{\infty}) |B| +\frac{T}{\eps}  |\zeta_0|_{\Lambda_{\sigma_0}} (C_2^M \ln k)^{-1}\; ,
\end{align*} 
that clearly gives the desired result by taking $ k $ large enough. 

 Now, let us tackle the strong convergence of $ (u_n)_n $. By (\ref{inq2}), we have that $(u_n)_n$ is bounded in $L^\infty([0,T], H^1_\mu(\mathbb R))$. Then, $(u_n)_n$ is bounded in $L^\infty(]0,T[,L^\infty (\R)).$  We deduce that it has a weakly-$\ast$  convergent subsequence in $L^\infty(]0,T[,L^\infty (\R))$. Now,  we will prove that $(\partial_t u_n)_n$ is bounded in $L^\infty(]0,T[, L^2(\mathbb R))$ and after we use a theorem of Aubin-Simon to get the strong convergence. To do so, we recall that by the equation it holds 
$$ \partial_t u_n(t,x)=\int_\mathbb R k_\mu(x-z)(\zeta_n+\eps u_n^2/2)(t,z)dz.$$
where $k_\mu(x):=\partial_x K_\mu(x)=\displaystyle \frac{3}{2\mu}\sgn(x)e^{-\sqrt{\frac{3}{\mu}}|x|}$.
Now by (\ref{inq2}) and  Young's convolution estimates, we have
$$\int_\mathbb R k_\mu(x-z) u_n^2/2(t,z)dz=|k* u_n^2/2|^2_{L^2}\leq c |k_\mu|_{L^1} |u_n^2|_{L^2}\leq C(\mu). $$

For the first integral on $\zeta_n$, we will use, as in \cite{Ami}, the fact that $|\sigma_0(1+\zeta_n)|_{L^1} \leq cte$ given by (\ref{inq2}) and the property of the mapping $\sigma_0(1+\cdot)$ giving in (\ref{estA}). Again denoting by $A_n(t)=\{ x\in \R/ \eps\zeta_n(t,x)-\beta b(x)\geq M\}$,  with $ M \ge 1 $ as in \eqref{estA},

 we write
\begin{align*}
\int_\mathbb R k(x-z)\zeta_n(t,z)dz& =\frac{1}{\eps}\int_{A_n^c} k(x-z)(\eps\zeta_n-\beta b_n)(t,z)dz\\
 & +\frac{1}{\eps}\int_{A_n} k(x-z)(\eps\zeta_n-\beta b_n)(t,z)dz+ \frac{\beta}{\eps}\int_{\R}k(x-z)b_n(z)\,dz\\
& =\frac{1}{\eps}(f_1+f_2+f_3).
\end{align*}
 By  Young's convolution estimates, (\ref{estA}) and (\ref{inq2}), we get 
\begin{align*}
    |f_1(t,\cdot)|_{L^2}\leq |k|_{L^1} | \zeta_n 1_{A_n^c}|_{L^2}&\leq  (C^M_1)^{-1/2} \left(\int_{A_n^c} \sigma_0(1+\eps\zeta_n(t,x)-\beta b(x))dx\right)^{1/2}\\
    &\lesssim |\zeta_0|_{\Lambda_{\sigma_0}}^{1/2},
\end{align*}

$$ |f_2(t,\cdot)|_{L^\infty}\leq \displaystyle \frac{1}{2} \int_{A_{n}}|\eps\zeta_n (t,x)-\beta b(x)|dx \lesssim |\zeta_0|_{\Lambda_{\sigma_0}}$$

$$  |f_2(t,\cdot)|_{L^1}\leq |k|_{L^1} | \zeta_n 1_{A_n}|_{L^1} \lesssim |\zeta_0|_{\Lambda_{\sigma_0}}.$$

Then, we obtain
$$  |f_2(t,\cdot)|_{L^2}\leq  |f_2|_{L^\infty}^{1/2} |f_2|_{L^1}^{1/2}\leq cte,$$
and
$$|f_3(t,\cdot)|_{L^2}\leq \beta |b|_{L^\infty}^{1/2} |k|_{L^1}^{1/2}.$$
Combining the above estimates, we deduce that $\|\partial_t u_{n}\|_{L^2}$ is  bounded uniformly in $n$ and $t$. \\
Next, we prove that $(u_n)_n$ has a strongly convergent subsequence  in \\ $C([0,T]; C(\R))$, i.e. in $C([0,T]; C(K))$ for every compact $K$ of $\R$. For this end, set $K_m=[-m,m]$, by compact Sobolev injection and Aubin-Simon theorem (see \cite{S87}, Corollary 4 p. 85), we have that $ E^m_{\infty,\infty}$ is compactly embedded in $ C([0,T], C(K_m))$  where $$ E^m_{\infty,\infty}= \{ u\in L^\infty(]0,T[,H^1(K_m)) \mbox{ such that }\displaystyle \partial_t u \in L^\infty(]0,T[,L^2(K_m))\}.$$ By the preceding calculations, we have proved that $(u_n)_n$ is bounded in $E^m_{\infty\infty}$. We deduce that it has a  subsequence $(u_{n_k}^m)_k$ strongly convergent in $C([0,T],C(K_m))$ $(\mbox{also in }  C([0,T],L^2(K_m)))$. By applying the diagonal extraction processus, we can construct a subsequence $(u_{n_k})_k$ which is strongly convergent in $C([0,T],C(K_m))$, for every $m\geq 1 $ and thus  in $C([0,T],C(K))$ $(\mbox{ and then in }  C([0,T],L^2(K)))$ for every compact $K$ of $\R$.
This completes the proof of the theorem.
In the next theorem we construct  a weak solution for the Boussinesq system.
\begin{theorem}\label{T72}
Let $(\zeta_n,u_n)$ be as in Proposition \ref{T6}. Then, the limit functions $(\zeta,u)$ obtained by Proposition \ref{T6} is a weak solution for the Boussinesq system with initial data $(\zeta_0,u_0)$ verifying that $(\zeta,u) \in L^\infty(\R^+; \Lambda_{\sigma_0}\times H^1_\mu).$
\end{theorem}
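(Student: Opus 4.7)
The strategy is to pass to the limit $n\to\infty$ in the weak formulation of \eqref{BPW} satisfied by the regular approximate solutions $(\zeta_n,u_n)$ constructed in Proposition \ref{T6} via Proposition \ref{prop1} (with bottom $b_n$ and data $(\zeta_{0,n},u_{0,n})$). For every $\psi\in C_c^\infty(]-T,T[\times\R)$ and after integration by parts in time and space, $(\zeta_n,u_n)$ satisfies two identities analogous to \eqref{weak} in which $\mu\toh$ is replaced by $-\frac{\mu}{3}\partial_x^2$ and $b$ by $b_n$. The task is to pass to the limit in each term along the subsequence $(n_k)$ extracted in Proposition \ref{T6}.

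The linear terms $\int\!\!\int\zeta_n\psi_t$ and $\int\!\!\int\psi_x\zeta_n$ converge by weak $L^1_{loc}$ convergence of $\zeta_n$ against test functions that lie in $L^\infty$ with compact support, while $\int\!\!\int(1-\tfrac{\mu}{3}\partial_x^2)\psi_t\,u_n$ and $\int\!\!\int\psi_x u_n(1-\beta b_n)$ converge by the strong convergence $u_n\to u$ in $C([0,T];C(\R))$ and $b_n\to b$ in $H^1$. The initial terms pass thanks to $u_{0,n}\to u_0$ in $H^1_\mu$ and $\zeta_{0,n}=\rho_n*\zeta_0\to\zeta_0$ in $L^1_{loc}$ (the latter following from $\zeta_0\in L^1_{loc}$, itself a consequence of $\zeta_0\in\Lambda_{\sigma_0}$ and \eqref{estA}). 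The nonlinear term $\eps\int\!\!\int\psi_x u_n\zeta_n$ is split as $\eps\int\!\!\int\psi_x(u_n-u)\zeta_n+\eps\int\!\!\int\psi_x u(\zeta_n-\zeta)$: the first summand is dominated by $\eps|\psi_x|_\infty\|u_n-u\|_{L^\infty({\rm supp}\,\psi)}\sup_{t\in[0,T]}|\zeta_n(t)\mathbf{1}_{{\rm supp}_x\psi}|_{L^1}$ and vanishes thanks to Lemma \ref{cor2} (giving a uniform-in-$n$ $L^1_{loc}$ bound on $\zeta_n(t)$) and the uniform convergence of $u_n$, while the second vanishes by weak $L^1_{loc}$ convergence against $u\psi_x$, which belongs to $L^\infty$ with compact support since $u\in C(\R^+\times\R)$. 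Finally, $\tfrac{\eps}{2}\int\!\!\int\psi_x u_n^2$ passes because $u_n^2\to u^2$ in $L^1_{loc}$, a consequence of the strong $L^2_{loc}$ convergence of $u_n$ together with the uniform $L^\infty$ bound furnished by $H^1_\mu\hookrightarrow L^\infty$ and \eqref{inq2}.

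For the regularity $(\zeta,u)\in L^\infty_{loc}(\R^+;\Lambda_{\sigma_0}\times H^1_\mu)$, the uniform bound $|u_n(t)|_{H^1_\mu}\le M_T$ on $[0,T]$ given by \eqref{inq2}, together with the pointwise-in-$t$ convergence $u_n(t)\to u(t)$ in $C_{loc}(\R)$ and the weak lower semicontinuity of the $H^1_\mu$-norm (after extracting at each $t$ a further $H^1_\mu$-weakly convergent subsequence), yields the sought bound on $|u(t)|_{H^1_\mu}$. For $\zeta(t)\in\Lambda_{\sigma_0}$, I would invoke the lower semicontinuity of the convex nonnegative functional $v\mapsto\int_\R\sigma_0(1+\eps v-\beta b)dx$ under weak $L^1_{loc}$ convergence, a classical Ioffe-type result, combined with the uniform bound from \eqref{inq2}.

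The main obstacle is precisely this last step: Proposition \ref{T6} provides only \emph{space-time} weak $L^1$ convergence of $\zeta_n$, whereas $\Lambda_{\sigma_0}$-membership is a pointwise-in-$t$ assertion. I plan to establish first the integrated estimate $\int_0^T\!\!\int_\R\sigma_0(1+\eps\zeta-\beta b)dxdt\le TC_T$ by LSC applied to the space-time integral, and then to upgrade it to a pointwise-in-$t$ bound by exploiting formula \eqref{uh}, which expresses $u_x(t)$ and hence $\zeta(t)$ (via the second equation of \eqref{BPW}) in terms of $u_0$, $u^2$, $b$ and $\int_0^t\zeta$, thereby transferring the already established strong control of $u$ into a pointwise-in-$t$ control of $\zeta$.
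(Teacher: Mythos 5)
Your limit passage in the weak formulation is essentially the paper's argument: the same splitting of $\int\!\!\int\psi_x(u_n\zeta_n-u\zeta)$ into $\int_S\zeta_n(u_n-u)\psi_x+\int_S u(\zeta_n-\zeta)\psi_x$, controlled respectively by the uniform convergence of $u_n$ on compacta together with the uniform $L^1_{loc}$ bound of Lemma \ref{cor2}, and by the weak $L^1_{loc}$ convergence of $\zeta_n$ tested against $u\psi_x\in L^\infty(S)$; likewise for the $b_nu_n$ term and for the second equation. Where you genuinely diverge is the initial data: you fold it into the weak formulation \eqref{weak} by taking test functions not vanishing at $t=0$ and using $\zeta_{0,n}\to\zeta_0$ in $L^1_{loc}$ and $u_{0,n}\to u_0$ in $H^1_\mu$. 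The paper instead works with $\varphi\in C_c^\infty(]0,+\infty[\times\R)$ and proves attainment of the initial data separately, via a Dunford--Pettis extraction at times $t_k\searrow 0$, the time-integrated form of \eqref{BPW}$_1$ and Lemma \ref{cor2}. Your route is more economical and directly verifies Definition \ref{def}; the only point to make explicit is that the weak $L^1$ compactness of $(\zeta_n)$ must hold on $]0,T[\times]-A,A[$ all the way down to $t=0$ (it does: the equi-integrability estimate in the proof of Proposition \ref{T6} is uniform in $t$), since the statement of Proposition \ref{T6} only mentions compacts of the open half-line.

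The one step that would fail as written is your ``upgrade'' of the integrated entropy bound to a pointwise-in-$t$ bound via formula \eqref{uh}. That formula only controls $\int_0^t\zeta$, i.e.\ linear, time-averaged information on $\zeta$; it cannot produce a bound on the nonlinear quantity $\int_\R\sigma_0(1+\eps\zeta(t,\cdot)-\beta b)\,dx$ at almost every fixed $t$. The standard repair --- and what Schonbek's Theorem 5.2, which the paper invokes for precisely this step, actually does --- is either to apply, for a.e.\ fixed $t$, Dunford--Pettis to $(\zeta_n(t))_n$ (equi-integrable uniformly in $n$ by \eqref{estA} and \eqref{inq2}), identify the weak $L^1_{loc}$ limit with $\zeta(t)$, and only then use convexity and lower semicontinuity at that fixed $t$; or, starting from your integrated bound $\int_{t-\delta}^{t+\delta}\int_{-A}^{A}\sigma_0(1+\eps\zeta-\beta b)\,dx\,ds\le\int_{t-\delta}^{t+\delta}Ce^{s}\,ds$, to divide by $2\delta$, let $\delta\to 0$ at Lebesgue points, and then let $A\to+\infty$ by monotone convergence. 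Your treatment of $u$ (uniform $H^1_\mu$ bound, pointwise-in-$t$ identification through the $C([0,T];C(K))$ convergence, weak lower semicontinuity of the norm) is fine; to complete the membership $\zeta(t)\in\Lambda_{\sigma_0}$ you should also record that $h=1+\eps\zeta-\beta b\ge 0$ a.e., which follows since it is the weak $L^1_{loc}$ limit of the nonnegative functions $1+\eps\zeta_n-\beta b_n$.
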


\begin{proof} Let $\varphi\in C_c^\infty(]0,+\infty[\times \R)$. Multiplying (\ref{BPW}) by $\varphi$ and integrating, we obtain 

\begin{eqnarray}\label{iT7}
\int_0^{+\infty}\int_\R\zeta_n \varphi_t dx dt+\int_0^{+\infty}\int_\R(u_n+u_n(\eps\zeta_n-\beta b_n )\varphi_x dx dt=0
\end{eqnarray}
and
\begin{eqnarray}\label{iT8}
\int_0^{+\infty}\int_\R u_n \varphi_t dx dt &+\int_0^{+\infty}\int_\R(\eps u_n^2/2+\zeta_n)\varphi_x dx dt - \int_0^{+\infty}\int_\R u_n \varphi_{xxt} dx dt \nonumber\\
&\hspace{1cm }=0.
\end{eqnarray}
By taking the limit when $n$ tends to infinity, we have to prove that
$$ \int_0^{+\infty}\int_\R\zeta \varphi_t dx dt+\int_0^{+\infty}\int_\R(u+u(\eps\zeta -\beta b)\varphi_x dx dt=0$$
and 
$$\int_0^{+\infty}\int_\R u \varphi_t dx dt+\int_0^{+\infty}\int_\R(\eps u^2/2+\zeta)\varphi_x dx dt- \int_0^{+\infty}\int_\R u \varphi_{xxt} dx dt=0 $$
Since $\varphi$ is with compact support in $]0,+\infty[\times \R$ and $(\zeta_n)_n$ converges weakly in $L^1_{loc}$ to $\zeta$ we obtain
$$ \lim_{n\rightarrow +\infty}\int_0^{+\infty}\int_\R\zeta_n \varphi_tdxdt = \int_0^{+\infty}\int_\R\zeta \varphi_tdxdt.$$
And the strong convergence of $u_n$ to $u$ in $C([0,T],L^2(K))$ implies that 
$$ \lim_{n\rightarrow +\infty}\int_0^{+\infty} \int_\R (u_n-u) \varphi_x dx dt =0.$$
Let $S$ be the support of $\varphi$ and suppose that $ S \subset ]0,T[ \times ]c,d[ \subset ]0,+\infty[\times \R$. Then, we write
$$ \int_0^{+\infty}\int_\R (\zeta_n u_n - \zeta u)\varphi_xdxdt= \int_S \zeta_n(u_n-u)\varphi_x dx dt +\int_Su(\zeta_n-\zeta)\varphi_x dx dt $$
 Since  $u_n\to  u $ in  $ C([0,T];C(K)), $ for every $K$ compact of $\R$ 
and  $ (\zeta_n)_n $ is bounded in $ L^1_{loc} $ we deduce that the first term in the above right-hand side member tends to $0 $ as $n\to +\infty $. Notice that the limit for second term  follows directly from the weak convergence of $(\zeta_n)_n$ in $ L^1_{loc} $ and the fact that $u\varphi_x \in L^\infty(S)$. In the same way we write
$$ \int_0^{+\infty}\int_\R (b_n u_n - b u)\varphi_xdxdt= \int_S b_n(u_n-u)\varphi_x dx dt +\int_Su(b_n-b)\varphi_x dx dt, $$  again using the convergence of $u_n$ to $u$ in $C([0,T],L^2(K))$ and the one of $b_n$ to $b$ in $L^2(K))$ we deduce that 
$$\int_0^{+\infty}\int_\R (b_n u_n - b u)\varphi_xdxdt\longrightarrow 0.$$
We finally obtain
$$\int_0^{+\infty}\int_\R\zeta \varphi_t dx dt+\int_0^{+\infty}\int_\R(u+u(\eps\zeta-\beta b) )\varphi_x dx dt=0,$$
which implies that $(\zeta,u)$ satisfies the first equation of (\ref{BPW}) in the distribution sense.\\ 
For the second equation (\ref{iT8}), the proof is  direct  using the weak convergence of $(\zeta_n)$ and the strong convergence of $(u_n)$. \\ \\
It is still  to prove that the limit $(\zeta,u)$ satisfies the initial data $(\zeta_0,u_0).$  Recall that $(u_n) $ converges to $u$ in $C([0,T],C(K))$ (i.e. in $C([0,T]\times K))$)  for every compact $K$ of $\R$ and that  $u_{0,n}$ converges to $u_0$ in $H^1(\R)$. This enough to implies that $u(0,x)=u_0(x)$ for a.e. $x\in \R$. 
In fact we notice that 
\begin{eqnarray*}
\begin{array}{ll}
\displaystyle || (u(t,\cdot)-u_0(\cdot))||_{\infty,K} \leq   \displaystyle || (u(t,\cdot)-u_n(t,\cdot))||_{\infty,K}  & \\ 
\hspace*{3cm}+ \displaystyle || (u_n(t,\cdot)-u_{0,n}(\cdot))||_{\infty,K} +\displaystyle || (u_{0,n}(\cdot)-u_0(\cdot)))||_{\infty,K}  .&
\end{array}
\end{eqnarray*}
The above convergence results force the first and the third  term of the above right-hand side to converge towards $ 0 $ uniformly in $ t\in [0,T] $ whereas the continuity of $ u_n $ force the second term to tends to $0$ as $ t\searrow 0 $ for each fixed $n\in \N $, 
 and thus  $u(0,x)=u_0(x)$ for a.e. $x\in \R$. \\
Let us now prove that $\zeta$ satisfies the initial condition. For $\varphi \in C^\infty_c(\R)$ and $(t_k)_k \in [0,T]$ converging to $0$, we have
\begin{eqnarray}\label{ic}
\begin{array}{ll}\displaystyle \left|\int_\R (\zeta(t_k,x)-\zeta_0(x))\varphi dx\right| \leq   \displaystyle\left|\int_\R (\zeta(t_k,x)-\zeta_n(t_k,x))\varphi dx\right| &\\ 
\hspace*{2cm}+ \displaystyle \left|\int_\R (\zeta_n(t_k,x)-\zeta_{0,n}(x))\varphi dx\right|+ \left|\int_\R (\zeta_{0,n}(x)-\zeta_0(x))\varphi dx\right|. &
\end{array}
\end{eqnarray}
For the first integral, we proceed as in \cite{Sch}, Claim $4.2$  of Theorem $4.2$]. So by Dunford's lemma and  Lemma \ref{cor2}, for each $t$ there exists a subsequence $(\zeta_n^t)_n$ of $(\zeta_n)_n$ such that 
$$ \lim_{n\rightarrow +\infty} \int_\R (\zeta(t,x)-\zeta^t_n(t,x))\varphi dx =0.$$
Now applying the diagonalization process to $ (\zeta_n^{t_k})_{n,k}$, we can extract a subsequence $(\zeta_k)_k$ such that
$$\lim_{k\rightarrow +\infty} \int_\R (\zeta(t_k,x)-\zeta_k(t_k,x))\varphi dx =0.  $$ 
For the second integral, using the integral representation of $\zeta_k$ given by \eqref{BPW}$_1$, we get
\begin{eqnarray} \begin{array}{l} \displaystyle \int_\R (\zeta_k(t_k,x)- \zeta_{0,k}(t,x)) \varphi (x) dx \\
\qquad = -\displaystyle\int_\R \int_0^{t_k} \left((u_k(s,x) +u_k (\eps\zeta_k(s,x)-\beta b_k(x))\right)_x \varphi(x) dsdx \\
\qquad =\displaystyle \int_0^{t_k} \int_\R \left((u_k(s,x) +u_k (\eps\zeta_k(s,x)-\beta b_k(x))\right) \varphi_x(x)dx ds .\end{array} 
\end{eqnarray}
By Lemma \ref{cor2}  it holds 
$$
\begin{array}{ll}
 \left| \displaystyle \int_0^{t_k} \int_\R \left((u_k(s,x) +u_k (\eps\zeta_k(s,x)-\beta b_k(x))\right)\varphi_x(x)dx ds\right| &\\ 
 \hspace*{2cm} \leq |u_k|_{L^\infty} \displaystyle \int_0^{t_k} \int_\R \left (| \varphi_x(x)| + 
|\zeta_k(s,x)\varphi_x(x)|+|b_k(x)\varphi_x(x)|\right) dx & \\
\hspace*{2cm} \leq c_1 t_k, \\
\end{array}
$$
where $c_1$ is independent of $k$. So for the subsequence $ \zeta_k$, we obtain that 
$$  \displaystyle \left|\int_\R (\zeta_k(t_k,x)-\zeta_{0,k}(x))\varphi dx\right|\leq c_1 t_k. $$
The last integral in (\ref{ic}) goes to $0$ since  $ \zeta_0\in L^1_{loc} $ and thus $(\zeta_{0,k})_k $ converges to $\zeta_0$ in $L^1_{loc}$. Then, by using the subsequence $(\zeta_k)_k$ in (\ref{ic}), we deduce that
$$ \lim _{k\rightarrow +\infty} \displaystyle \left|\int_\R (\zeta(t_k,x)-\zeta_0(x))\varphi dx\right|=0. $$ 

Now to finish the proof we can  proceed as  Schonbek in \cite{Sch} (Theorem 5.2) to show that $(\zeta,u) \in L^\infty(\R^+; \Lambda_{\sigma_0}\times H^1_\mu)$.
\end{proof}

\section{\bf{Appendix}}
\subsection{Proof of Proposition \ref{propcom}}
Let $ N>0$.  We follow \cite{Lanlivre13}. By Plancherel and the mean-value theorem, 
 \begin{align*}
\Bigl| ( [P_N, P_{\ll N}f] g_x)(x)\Bigr| &=\Bigl|( [P_N, P_{\ll N}f] \tilde{P}_N g_x)(x)\Bigr| \\
 & =\Bigl|  \int_{\R} {\mathcal F}^{-1}_x(\varphi_N)(x-y) P_{\ll N}f(y)  \tilde{P}_N g_x(y) \, dy \\ 
 &\qquad -
 \int_{\R}  P_{\ll N}f(x)  {\mathcal F}^{-1}_x(\varphi_N)(x-y) \tilde{P}_N g_x(y) \, dy\Bigr| \\
 & = \Bigl|  \int_{\R}(P_{\ll N}f (y) -P_{\ll N}f(x)) N  {\mathcal F}^{-1}_x(\varphi)(N(x-y))  \tilde{P}_N g_x(y) \, dy\Bigr| \\
& \le \|P_{\ll N}f_x\|_{L^\infty_x}\int_{\R} N |x-y| |{\mathcal F}^{-1}_x(\varphi)(N(x-y))| |\tilde{P}_N g_x(y)| \, dy 
  \end{align*}
  Therefore, since $N |\cdot| |{\mathcal F}^{-1}_x(\varphi)(N \cdot)|=|{\mathcal F}^{-1}_x(\varphi')(N \cdot) | $ we deduce from Young's convolution and Bernstein inequalities that 
 \begin{align*} 
 \Bigl| [P_N, P_{\ll N}f] g_x)|_{L^2} & \lesssim  N^{-1}|P_{\ll N}f_x|_{L^\infty_x}  |\tilde{P}_N g_x|_{L^2} 
 \lesssim |P_{\ll N}f_x|_{L^\infty_x}  |\tilde{P}_N g|_{L^2} \; .   \end{align*}
This completes the proof of estimation (\ref{cm1}).
\subsection{Proof of Proposition \ref{propproduit}} Let us prove estimate (\ref{proN1}).  Using Bernstein inequality and discrete Young' inequalities, we have
 $$
 \begin{array}{lcl}
N^s |P_N ( P_{\gtrsim N} f_x\, g)|_{L^2} &\lesssim & N^s |g|_{L^\infty_x}\Bigl(\sum_{K\gtrsim N}  |P_{K} f_x|_{L^2_x}^2\Bigr)^{1/2}\\ 
&\lesssim & |g|_{L^\infty_x} 
 \sum_{K \gtrsim N} (\frac{N}{K})^s   |P_K f_x|_{H^s}  \\ 
 &\lesssim &   \delta_N |g|_{L^\infty_x} |f|_{H^{s+1}}
 
 \end{array}
 $$
Now it remains to prove 
 $$ N^s |P_N ( P_{\gtrsim N} f \, g_x)|_{L^2} \lesssim \delta_N | f |_{H^{s+1}}|g|_{L^\infty}.$$
To do so, we write $P_N ( P_{\gtrsim N} f \, g_x)= \partial_xP_N ( P_{\gtrsim N} f \, g)- P_N ( P_{\gtrsim N} f_x \, g)$. The second term can be treated as above to obtain 
 $$N^s |P_N ( P_{\gtrsim N} f_x \, g)|_{L^2} \lesssim  \delta_N | f |_{H^{s+1}}|g|_{L^\infty}.$$
For the first term, we have
\begin{equation}\nonumber
 \begin{array}{lcl}
  N^s |\partial_xP_N ( P_{\gtrsim N} f \, g)|_{L^2}& \lesssim & N^s N |P_N(P_{\gtrsim N} f \, g)|_{L^2} \\
  & \lesssim & N^s N  | P_N(\sum_{K\gtrsim N} P_K f \, g) |_{L^2_x}\\
   & \lesssim & |  f |_{H^{s+1}} |g |_{L^\infty_x}  \sum_{K\gtrsim N} \delta_K   (\frac{N}{K})^{s+1}\\
 &\leq& \delta_N | f|_{H^{s+1}}|g|_{L^\infty}
 \end{array}
 \end{equation}
 \\
Finally to prove estimate (\ref{proN2}), we first notice that it follows directly from \eqref{proN1} for $ s>3/2 $ since $ H^{s-1}(\R) \hookrightarrow L^\infty(\R) $. For $ 1/2<s\le 3/2 $, we start by noticing that 
$$
P_N(P_{\gtrsim N} f \, g_x) =P_N(P_{\sim N} P_{\lesssim N} g_x)  + P_N(\sum_{K\gtrsim N} P_K f  \tilde{P}_K g_x)\; .
$$
The contribution of the first term of the above  right-hand side  is easily estimated by 
\begin{align*}
N^{s-1} |P_N(P_{\sim N} f  P_{\lesssim N} g_x)|_{L^2} & \lesssim N^{s-1} |P_{\sim N} f |_{L^\infty} N^{2-s} |g|_{H^{s-1}} \\
& \lesssim N  |P_{\sim N} f |_{L^\infty} |g|_{H^{s-1}} \\
& \lesssim \delta_N  |f |_{H^{s+1}} |g|_{H^{s-1}}
\end{align*}
since $ s>1/2 $. 
On the other hand, the contribution of the second term can be estimated by 
\begin{align*}
N^{s-1} \Bigl|P_N(\sum_{K\gtrsim N} P_K f  \tilde{P}_K g_x)\Bigr|_{L^2} & 
\lesssim N^{s-1} N^{1/2}  \Bigl| P_N(\sum_{K\gtrsim N} P_K f  \tilde{P}_K g_x)\Bigr|_{L^1}\\
& \lesssim N^{s-1/2} \sum_{K\gtrsim N} 
K^{-1-s} |P_{\sim K} f |_{H^{s+1}} K^{2-s} |\tilde{P}_K g|_{H^{s-1}} \\
& \lesssim N^{s-1/2} |f |_{H^{s+1}} |g|_{H^{s-1}} \sum_{K\gtrsim N} 
K^{1-2s}\\
& \lesssim N^{1/2-s} |f |_{H^{s+1}}  |g|_{H^{s-1}}
\end{align*}
that is suitable since $s>1/2 $. 
\subsection{Proof of Proposition \ref{propproduit2}}
We write 
\begin{align}
|fg|_{H^s}^2 & \lesssim |P_{\lesssim 1} (fg) |_{L^2_x}^2 +\sum_{N\gg 1} N^{2s} |P_N(fg)|_{L^2_x}^2 \nonumber\\
&\lesssim |P_{\lesssim 1} (P_{\lesssim  1} f \, P_{\lesssim 4} g) |_{L^2_x}^2 +|P_{\lesssim 1} (P_{\gg 1} f \, g) |_{L^2_x}^2\nonumber \\
& +\sum_{N\gg 1} N^{2s} (|P_N(P_{\ll N} f \tilde{P}_N g)|_{L^2_x}^2+|P_N(P_{\gtrsim N} f   g)|_{L^2_x}^2)\label{vv}
\end{align}
For any $ s\in \R $, it is direct to check that 
\begin{align*}
 |P_{\lesssim 1} (P_{\lesssim 1 } f P_{\lesssim 4} g) |_{L^2_x}^2 + & \sum_{N\gg 1} N^{2s} |P_N(P_{\ll N} f \tilde{P}_N g)|_{L^2_x}^2\\
  &  \lesssim |P_{\lesssim 1} f|_{L^\infty_x}^2 |P_{\lesssim 4} g |_{L^2_x}^2+\sum_{N\gg 1} N^{2s} |P_{\ll N} f |_{L^\infty_x}  |\tilde{P}_N g|_{L^2_x}^2\\
 & \lesssim |f|_{L^\infty_x}^2 |g|_{H^s}^2
 \end{align*}
To estimate the  other terms in the right-hand side of \eqref{vv} we separate the cases $ s<0 $ and $s\ge 0 $. For $ s\ge 0 $, 
we notice that 
$$
|P_{\lesssim 1} (P_{\gg1} f \, g) |_{L^2_x} \lesssim |f|_{L^\infty_x} |g|_{L^2_x}\lesssim |f|_{L^\infty_x} |g|_{H^s}
$$
and  that  Bernstein inequalities lead to
\begin{align*}
 \sum_{N\gg 1} N^{2s} |P_N(P_{\gtrsim N} f g)|_{L^2_x}^2 & 
 \lesssim |g|_{L^2_x}^2  \sum_{N\gg 1} N^{0-} |P_{\gtrsim N} f|_{W^{s+,\infty}}^2\lesssim |g|_{H^s}^2  | f|_{W^{s+,\infty}}^2 \;.
\end{align*}
This completes the proof of \eqref{prod5} for $ s\ge 0$.
Now for $ s< 0 $ we use  Bernstein inequalities   to get 
\begin{align*}
|P_{\lesssim 1} (P_{\gg1} f\, g) |_{L^2_x}  \lesssim \sum_{K\gg 1} |P_{K} f P_{\lesssim K} g|_{L^2_x} 
&\lesssim \sum_{K\gg 1} |P_{K}  f|_{L^\infty_x} |P_{\lesssim K}g|_{L^2_x}\\
& \lesssim \sum_{K\gg 1} K^{-(|s|+)} |P_{K}  f|_{W^{|s|+,\infty}}  K^{|s|} |P_{\lesssim K}g|_{H^s}\\
& \lesssim  | f|_{W^{|s|+,\infty}}   |g|_{H^s}
\end{align*}
and in the same way
\begin{align*}
 \sum_{N\gg 1} N^{2s} |P_N(P_{\gtrsim N} f g)|_{L^2_x}^2 & 
 \lesssim 
  \sum_{N\gg 1} N^{2s} \sum_{K\gtrsim N} \Bigl( K^{-(|s|+)}|P_K f|_{W^{|s|+,\infty}}  K^{|s|}|\tilde{P}_{\lesssim K} g|_{H^s_x}\Bigr)^2\\
 & \lesssim  | f|_{W^{|s|+,\infty}}   |g|_{H^s}\; .
\end{align*}
This completes the case $ s<0 $.

It remains to prove \eqref{prod6}. We separate the  case 
 $ N\lesssim 1 $ and $ N \gg 1$. For $ N\lesssim 1 $ we have 
 $$
 N^\theta |[P_N, P_{\ll N} f] g_x |_{L^2_x} \lesssim | P_{\ll N} f|_{L^\infty_x}  |P_{\lesssim 1}  g_x |_{L^2_x} \lesssim |f|_{L^\infty_x} |g|_{H^\theta}  \; .
 $$
 
  For $ N\gg 1$, we separate the contributions of $ P_{\ll N} f $ and $ P_{\gtrsim N} f $.

  The contribution of the first term is easily estimated thanks to \eqref{cm1} by 
 $$
 N^\theta |[P_N, P_{\ll N} f] g_x |_{L^2_x} \lesssim N^\theta  |f|_{L^\infty_x} |\tilde{P}_N g|_{L^2_x}\lesssim \delta_N  |f|_{L^\infty_x} |g|_{H^\theta} \; .
 $$
 For the second term we  start by writing 
 \begin{align*}
 N^\theta |[P_N, P_{\gtrsim N} f] g_x |_{L^2_x}&\lesssim  N^\theta |P_{\gtrsim N} f P_N g_x|_{L^2_x} +N^\theta |P_N( P_{\gtrsim N} f g_x)|_{L^2} \\
 \end{align*}
 Then Bernstein inequalities lead to 
 $$
 N^\theta |P_{\gtrsim N} f P_N g_x|_{L^2_x} \lesssim N^\theta   N   |P_{\gtrsim N} f |_{L^\infty_x} |P_N g|_{H^\theta}\lesssim 
 \delta_N |f_x|_{L^\infty_x} | g|_{H^\theta}
 $$
 and proceeding as above we get for $\theta\ge 0 $,
 \begin{align*}
 \sum_{N\gg 1} N^{2\theta} |P_N(P_{\gtrsim N} f g_x)|_{L^2_x}^2 & = \sum_{N\gg 1} N^{2\theta} |P_N(P_{\gtrsim N} f P_{\lesssim 4 N} g_x)|_{L^2_x}^2\\
&  \lesssim |g|_{L^2_x}^2  \sum_{N\gg 1} N^{0-} N^2 |P_{\gtrsim N} f|_{W^{\theta+,\infty}}^2\lesssim |g|_{H^\theta}^2  | f_x|_{W^{\theta+,\infty}}^2 \;.
\end{align*}
and for $\theta<0 $
\begin{align*}
 \sum_{N\gg 1} N^{2\theta} |P_N(P_{\gtrsim N} f g_x)|_{L^2_x}^2 & 
 \lesssim 
  \sum_{N\gg 1} N^{2\theta} \sum_{K\gtrsim N} \Bigl( K^{-(|\theta|+)}|P_K f|_{W^{|\theta|+,\infty}}  K K^{|\theta|}|\tilde{P}_{\lesssim K} g|_{H^\theta}\Bigr)^2\\
 & \lesssim  | f_x|_{W^{|\theta|+,\infty}}^2   |g|_{H^\theta}^2\; .
\end{align*}

  \vspace*{2mm}  

\subsection{Ellipiticity and estimates  for the operator   $\mfT_b:=h_b(1+\mu \mathcal{T}[h_b])$}

In the following we give two lemmas concerning the operator $\mfT_b:=h_b (1+\mu\mathcal{T}[h_b])$, and its inverse. The first lemma is a straightforward adaptation of Lemma 1 in \cite{Israwi11}.
We give the proof for sake of completness. 
\begin{lemma}\label{proprim}
Let be given $\mu>0 $ and $b\in C_b^{1}(\R)$ satisfying  \eqref{hb}. Then the  self-adjoint  operator
 $$
\mfT_b: H^1(\R)\longrightarrow H^{-1}(\R)
$$
is well defined, one-to-one and onto.
\end{lemma}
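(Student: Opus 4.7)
The strategy is to invoke the Lax--Milgram theorem on the symmetric bilinear form naturally associated with $\mfT_b$. First I would observe that $\mfT_b$ extends to a bounded linear map from $H^1(\R)$ to $H^{-1}(\R)$: using integration by parts to dispose of the distributional derivative in $\partial_x(h_b^2 b_x u)$, the $H^{-1}$--$H^1$ duality pairing of $\mfT_b u$ against a test function $v \in H^1(\R)$ can be written as
\begin{equation*}
a(u,v) := \int_\R h_b u v\,dx + \frac{\mu}{3}\int_\R h_b^3 u_x v_x\,dx - \frac{\mu\beta}{2}\int_\R h_b^2 b_x\,(u v_x + u_x v)\,dx + \mu\beta^2 \int_\R h_b b_x^2\, u v\,dx.
\end{equation*}
Under the assumption $b \in C_b^1(\R)$ together with \eqref{hb}, every coefficient in $a$ lies in $L^\infty(\R)$, so $a$ is continuous on $H^1(\R) \times H^1(\R)$. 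Symmetry is visible by inspection, which gives the claimed self-adjointness of $\mfT_b$ (in the $H^1$--$H^{-1}$ duality sense).

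The crucial step is coercivity. On the diagonal,
\begin{equation*}
a(u,u) = \int_\R h_b u^2\,dx + \mu \int_\R \Bigl[\tfrac{1}{3} h_b^3 u_x^2 - \beta h_b^2 b_x\,u u_x + \beta^2 h_b b_x^2 u^2\Bigr]\,dx.
\end{equation*}
Setting $X := h_b^{3/2} u_x$ and $Y := h_b^{1/2} b_x u$ pointwise, the bracketed integrand equals the quadratic form $\tfrac{1}{3}X^2 - \beta XY + \beta^2 Y^2$, whose discriminant $\beta^2 - \tfrac{4}{3}\beta^2 = -\tfrac{\beta^2}{3}$ is strictly negative. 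Hence this form is positive definite, and Young's inequality with a suitable parameter (e.g.\ $\alpha = \tfrac{7}{12\beta}$, chosen so that both $\tfrac{1}{3}-\tfrac{\beta\alpha}{2}>0$ and $\beta^2 - \tfrac{\beta}{2\alpha}>0$) yields a universal constant $c>0$ with $\tfrac{1}{3}X^2 - \beta XY + \beta^2 Y^2 \geq c X^2$ pointwise. Combined with $h_b \geq h_0$, this produces
\begin{equation*}
a(u,u) \geq h_0 |u|_{L^2}^2 + c\mu h_0^3 |u_x|_{L^2}^2 \gtrsim_{\mu,h_0} |u|_{H^1_\mu}^2.
\end{equation*}

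The Lax--Milgram theorem then gives, for every $F \in H^{-1}(\R)$, a unique $u \in H^1(\R)$ with $\mfT_b u = F$, yielding simultaneously the injectivity and surjectivity of $\mfT_b : H^1(\R) \to H^{-1}(\R)$. The main obstacle is isolating the coercive part of $a$ in the presence of the ``wrong-sign'' cross term $-\mu\beta \int h_b^2 b_x u u_x$; the key observation is precisely that the underlying scalar quadratic form $\tfrac{1}{3}X^2 - \beta XY + \beta^2 Y^2$ is positive definite, so this cross term is fully absorbed by its two neighbours, leaving enough room to dominate the $H^1_\mu$-norm.
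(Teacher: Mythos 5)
Your proof is correct and follows essentially the same route as the paper: both define the symmetric bilinear form associated with $\mfT_b$, establish its continuity and coercivity on $H^1(\R)$, and conclude by Lax--Milgram. The only cosmetic difference is that the paper absorbs the cross term $-\mu\beta\int_\R h_b^2 b_x u u_x$ by completing the square explicitly, writing $a(u,u)=\int_\R h_b u^2+\mu\int_\R h_b\bigl(\tfrac{h_b}{\sqrt{3}}u_x-\tfrac{\sqrt{3}\beta}{2}b_x u\bigr)^2+\tfrac{\mu\beta^2}{4}\int_\R h_b b_x^2u^2$, whereas you verify positive definiteness of the pointwise quadratic form via its discriminant and Young's inequality --- the same key observation in slightly different clothing.
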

\begin{proof} We rewrite $\mfT_b u$ as 
$$
\mfT_b u= h_b u -\frac{\mu}{3} (h_b^3 u_x)_x + \frac{\beta \mu}{2}
\Bigl( (h_b^2 b_x u)_x -h_b^2 b_x u_x\Bigr) + \beta^2 \mu h_b b_x^2 u
$$
We notice that for  $b\in C^1_b(\R) $,  $\mfT_b u$ is clearly continuous from $ H^1(\R) $ into $H^{-1}(\R) $ and we  define the  continuous bilinear form $ a(\cdot,\cdot)$ on $ H^1$ by 
$$
a(u,v) = (\mfT_b u, v)_{H^{-1},H^1} \; .
$$
Observe that, the writing \eqref{selfadjoint} of  $\mfT_b u$ ensures that $ a(\cdot,\cdot)$ is symmetric as soon as $ b\in C^2_b(\R) $.
The crucial algebraic property is that $a(u,u)$ may be rewritten as 
$$
a(u,u)=\int_{\R} h_b u^2 + \mu \int_{\R} h_b \Bigl( \frac{h_b}{\sqrt{3}} u_x -\frac{\sqrt{3}\beta}{2} b_x u\Bigr)^2 +\frac{\mu \beta^2 }{4}\int_{\R} h_b b_x^2 u^2
$$
so that 
\begin{equation}\label{coer}
a(u,u) \ge h_0\Bigl(  |u|_{L^2_x}^2 + \mu  \Bigl|\frac{h_b}{\sqrt{3}} u_x -\frac{\sqrt{3}\beta}{2} b_x u \Bigr|_{L^2_x}^2+ \frac{\mu  \beta^2}{4}
|b_x u|_{L^2_x}^2\Bigr) \; . 
\end{equation}
On the other hand,  observe that 
\begin{align*}
|u|_{H^1}^2 &\le |u|_{L^2_x}^2 + \frac{3}{h_0^2}|\frac{h_b}{\sqrt{3}} u_x|_{L^2_x}^2 \\
    & \le |u|_{L^2_x}^2+ \frac{6}{h_0^2}\Bigl( \Bigl|\frac{h_b}{\sqrt{3}} u_x -\frac{\sqrt{3}\beta}{2} b_x u \Bigr|_{L^2_x}^2
    + \frac{3\beta^2}{4} |b_x u |_{L^2_x}^2 \Bigr) \; .
\end{align*}
so that 
$$
 \max(1, \frac{18}{\mu h_0^2} ) \Bigl( |u|_{L^2_x}^2 + \mu  \Bigl|\frac{h_b}{\sqrt{3}} u_x -\frac{\sqrt{3}\beta}{2} b_x u \Bigr|_{L^2_x}^2+ \frac{\mu  \beta^2}{4}
|b_x u|_{L^2_x}^2\Bigr) \ge |u|_{H^1}^2 \; .
$$
Combining this last inequality with \eqref{coer}, we get  
$$
a(u,u) \ge \frac{h_0}{\max(1, \frac{18}{\mu h_0^2} )}
|u|_{H^1}^2 \; .
$$
This proves  that $ a(\cdot,\cdot) $ is coercive on $ H^1(\R) $ and Lax-Milgram theorem then ensures that for any $ f\in H^{-1}(\R) $, there exists a unique $ u\in H^1(\R) $ such that $ \mfT_b u =f $, i.e. $\mfT_b u  $ is a bijection  from $H^1(\R) $ to $H^{-1}(\R)$.
\end{proof}
The following lemma gives more properties of the inverse operator $\mfT^{-1}_b$.
\begin{lemma}\label{proprim'}
Let $s>-\frac{3}{2}$ and $b\in   W^{(s+2)\vee \frac{5}{2}+,\infty}(\R)$  such that \eqref{hb} is satisfied. Then 
\begin{enumerate}
\item  For any $ s\ge 0 $ and $f\in H^s(\R) $ it holds 
\begin{equation}\label{estT}
\vert \mfT^{-1}_bf\vert_{H^s}+\sqrt{\mu}\vert \partial_x\mfT^{-1}_bf\vert_{H^s}\leq
C \vert f\vert_{H^s} \; .
\end{equation}
where $ C= C(\frac{1}{h_0},|b|_{W^{(s+2)\vee \frac{5}{2}+,\infty}} )$. \\
\item For $-\frac{3}{2}< s< 0$  there exists 
$ C_0=C_0(\frac{1}{h_0},|b|_{W^{\frac{5}{2}+,\infty}}) >0 $ such that   if $ 0<\beta <C_0 $ then \eqref{estT} holds actually for any $ f\in H^{-1}(\R) $.
\end{enumerate}
\end{lemma}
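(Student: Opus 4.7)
The statement consists of two regularity regimes treated by different methods.

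For Part~(1) ($s\ge 0$), I would extend the $L^2$-coercivity already obtained in Lemma~\ref{proprim} to higher Sobolev regularity via dyadic energy estimates and commutator bounds. The proof of Lemma~\ref{proprim} yields $a(v,v)\gtrsim |v|_{H^1_\mu}^2$ with constant depending only on $1/h_0$; for $s=0$ the claim follows at once by taking $v=u:=\mfT_b^{-1}f$ and combining this with $a(u,u)=(f,u)_{L^2}\le |f|_{L^2}|u|_{L^2}$. For $s>0$, I would localize in frequency by applying $P_N$ to $\mfT_b u=f$ and, using the $L^2$-self-adjointness of $\mfT_b$ (see \eqref{selfadjoint}), derive the identity
\[
a(P_N u,P_N u)=(P_N f,P_N u)-\frac{\mu}{3}\int [P_N,h_b^3]u_x\cdot P_N u_x\,dx-\int[P_N,g_b]u\cdot P_N u\,dx.
\]
The commutator estimate \eqref{prod6} (applied with $\theta=s$) combined with the coercivity, multiplication by $N^{2s}$ and summation in $N$, produces an inequality of the form $|u|_{H^{s+1}_\mu}^2\lesssim |f|_{H^s}|u|_{H^{s+1}_\mu}+C(b)\sqrt\mu\,|u|_{H^s}|u|_{H^{s+1}_\mu}$, the $\sqrt\mu$ arising from regrouping $\mu|u_x|_{H^s}=\sqrt\mu\cdot(\sqrt\mu |u_x|_{H^s})\le \sqrt\mu|u|_{H^{s+1}_\mu}$. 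Young's inequality and absorption give $|u|_{H^{s+1}_\mu}^2\lesssim |f|_{H^s}^2+C(b)|u|_{H^s}^2$, which closes by induction on $s$ in integer steps (using $|u|_{H^s}\le|u|_{H^s_\mu}\lesssim|f|_{H^{s-1}}$) and interpolation for non-integer $s$.

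For Part~(2) ($-3/2<s<0$), I would use a perturbative/Neumann series argument exploiting the fact that when $\beta=0$, $\mfT_b$ reduces to the explicit Fourier multiplier $\mfT_0:=1-\tfrac{\mu}{3}\partial_x^2$. Substituting $h_b=1-\beta b$ into the definition of $\mfT_b$, one has $\mfT_b=\mfT_0+\beta A_b$, where $A_b$ is a second-order differential operator whose coefficients are polynomials in $\beta$ with entries smooth in $b,b_x,b_{xx}$. Formally $\mfT_b^{-1}=\mfT_0^{-1}(I+\beta A_b\mfT_0^{-1})^{-1}$, which would be made rigorous via a Neumann series provided (a) $\mfT_0^{-1}:H^s\to H^{s+1}_\mu$ is bounded by an absolute constant (clear from Plancherel since $\sup_\xi \sqrt\mu|\xi|/(1+\mu\xi^2/3)=\sqrt3/2$), and (b) $A_b\mfT_0^{-1}:H^s\to H^s$ is bounded by $C(1/h_0,|b|_{W^{5/2+,\infty}})$ uniformly in $\mu\le 1$. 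Setting $C_0:=(2C)^{-1}$ then ensures convergence for $0<\beta<C_0$ and yields \eqref{estT}.

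The main obstacle is estimate (b). $A_b v$ contains a second-order term of the form $\mu\partial_x(\tilde\alpha\,\partial_xv)$ with $\tilde\alpha$ smooth in $b$, which \emph{a priori} loses a factor $\mu^{-1}$ when estimated in $H^s$. The key remedy is the identity $\mu v_{xx}=3(v-f)$, valid for $v=\mfT_0^{-1}f$, which gives $\mu|v_{xx}|_{H^s}\lesssim|f|_{H^s}$ uniformly in $\mu$ and trades the apparent loss for a gain from the defining equation of $v$. Combined with the product estimate \eqref{prod5} (which handles multiplication by smooth coefficients in $H^s$, $|s|<3/2$, at the cost of the $W^{|s|+,\infty}$-norm of the coefficient, hence the requirement $b\in W^{5/2+,\infty}$ since $\tilde\alpha$ involves $b_{xx}$), this yields the desired uniform bound and closes Part~(2).
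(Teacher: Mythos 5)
Your overall strategy is sound and, for part (2), genuinely different in presentation from the paper's. The paper does not prove part (1) at all: it cites \cite{Israwi11} and only proves part (2). For part (2) the paper does not split $\mfT_b=\mfT_0+\beta A_b$ and sum a Neumann series; instead it takes the solution $u=\mfT_b^{-1}f$ already furnished by Lemma \ref{proprim}, applies $P_N$ to $\mfT_b u=f$, uses the commutator identity \eqref{selfPN} to write $\mfT_b(P_Nu)=y_N+\sqrt{\mu}\,\partial_x z_N$ with $y_N=P_Nf-[P_N,g_b]u$ and $z_N=\tfrac{\sqrt{\mu}}{3}[P_N,h_b^3\partial_x]u$, invokes the $H^1_\mu$-elliptic estimate for $\mfT_b$ itself (from \cite{Israwi11}) on each block, and observes that both commutators carry an explicit factor $\beta$ because the $\beta=0$ parts of the coefficients are constants and commute with $P_N$; summing in $N$ gives $|u|_{H^{s+1}_\mu}^2\lesssim |f|_{H^s}^2+C(b)\beta^2|u|_{H^{s+1}_\mu}^2$ and the conclusion follows by absorption for $\beta<C_0$. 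Your Neumann series exploits exactly the same structural fact (the deviation from a constant-coefficient operator is $O(\beta)$) and has the mild advantage of reproving invertibility on these spaces; the paper's route avoids having to bound $A_b\mfT_0^{-1}$ as a composed operator and reuses the coercivity of $\mfT_b$ directly. Your mechanism for not losing $\mu^{-1}$ on the second-order piece (the identity $\mu v_{xx}=3(v-f)$ for $v=\mfT_0^{-1}f$) plays the role that coercivity of $a(\cdot,\cdot)$ on $P_Nu$ plays in the paper; both are valid.

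Two bookkeeping points you should fix. First, in part (2) your claim that $b\in W^{\frac{5}{2}+,\infty}$ suffices because the coefficients of $A_b$ are estimated in $W^{|s|+,\infty}$ with $|s|<\tfrac32$ does not add up: the zeroth-order coefficient $\tfrac{\mu}{2}\partial_x(h_b^2b_x)$ contains $b_{xx}$, and $b_{xx}\in W^{\frac32+,\infty}$ would force $b\in W^{\frac72+,\infty}$. The way out (which the paper uses in its estimate of $[P_N,g_b]u$) is to spend the $\mu$ prefactor on those terms to measure the argument in $H^{s+1}$ rather than $H^s$, paying $\mu^{-1/2}$ which the prefactor repays; since $s+1>-\tfrac12$, only the $W^{\frac12+,\infty}$-norm of $(h_b^2b_x)_x$ is then needed, consistent with $b\in W^{\frac52+,\infty}$. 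You have the right tool in hand, but as written the regularity count is wrong. Second, in part (1) the inequality ``$|u|_{H^s}\le |u|_{H^s_\mu}$'' used to close the induction is backwards for $\mu\le 1$ (by \eqref{defHmu}, $|u|_{H^s_\mu}\le |u|_{H^s}$); the induction still closes because the error term carries a prefactor $\mu$ that compensates the $\mu^{-1/2}$ loss in passing from $\sqrt{\mu}|u_x|_{H^{s-1}}\lesssim |f|_{H^{s-1}}$ to $|u|_{H^s}$, but the justification needs to be stated that way.
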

\begin{proof}
The proof in the case $ s\ge 0 $ is given in  \cite{Israwi11}. 
According to Lemma \ref{proprim}, $ \mfT^{-1}_b $ is well defined from $H^{-1}(\R) $ into $ H^1(\R) $. So let $ f\in H^{-1}(\R) $ and set $ u= \mfT^{-1}_b f \in H^1(\R) $. According to \eqref{selfPN}, for any $ N\ge 1 $ it holds 
$$
\mfT_b( P_N u) = P_N f -\frac{\mu}{3} \partial_x \Bigl( [P_N, h_b^3 \partial_x] u\Bigr) -[P_N, g_\beta] u = y_N + \sqrt{\mu} \partial_x z_N
$$
with $ y_N = P_N f - [P_N, g_\beta] u \in L^2(\R) $ and $z_N=\frac{\sqrt{\mu}}{3}[P_N, h_b^3 \partial_x] u \in L^2(\R)$. Then according to (\cite{Israwi11}, proof of Lemma 2, Step 1.) we have 
\begin{equation}\label{o1}
\|P_N u\|_{H^1_\mu} \le C(\frac{1}{h_0}) \Bigl( \| y_N\|_{L^2} +  \| z_N\|_{L^2} \Bigr)\; .
\end{equation}
We notice that $ [P_N,g_\beta]=\beta [P_N, \tilde{g}_\beta]$ with $ \tilde{g}_\beta= b +\frac{\mu}{2} \partial_x(h_b^2 b_x) +\mu \beta b_x^2 $. Therefore making use of \eqref{prod5}  we obtain for $ -3/2< s<0 $ 
\begin{align}
N^s  |P_N f - [P_N,g_\beta] u|_{L^2}  \lesssim & \delta_N \Bigl( |P_N f|_{H^s} + \beta  | b |_{W^{\frac{3}{2}+,\infty}} |u|_{H^{s}}\nonumber \\
& + \beta(|(h_b b_x)_x|_{W^{\frac{1}{2}+,\infty}}+|b_x|_{W^{\frac{1}{2}+,\infty}})|u|_{H^{s+1}_\mu} \Bigr)\nonumber \\
 \lesssim & \delta_N \Bigl( |P_N f|_{H^s} + C(|b|_{W^{\frac{5}{2}+,\infty}}) \,  \beta 
 |u|_{H^{s+1}_\mu} \Bigr) \label{o2}\; .
\end{align}
where we used the factor $ \mu $ and the fact that $s+1>-1/2 $.
On the other hand, making use of \eqref{prod6} we get 
\begin{eqnarray}
N^s |[P_N, h_b^3 \partial_x ] u|_{L^2} & \lesssim &|\partial_x (h_b^3)|_{L^\infty} |\tilde{P}_N u |_{H^s} + N^{0-} | \partial_x (h_b^3) |_{W^{\frac{3}{2}+,\infty}} |u|_{H^{-\frac{3}{2}+}}  \nonumber \\
&\lesssim & \beta |b_x|_{W^{\frac{3}{2}+,\infty}}\Bigl( |\tilde{P}_N u |_{H^s} +  N^{0-} |u|_{H^{-\frac{3}{2}+}} \Bigr)\label{o3}
\end{eqnarray}
Gathering \eqref{o1}-\eqref{o3}, we eventually get for $ s<0 $, 
\begin{eqnarray}
|u|_{H^{s+1}_\mu}^2 & \le &   \sum_{N\ge 1} N^{2s} |P_{N} u|_{H^1_\mu}^2\nonumber \\
& \lesssim & |f|_{H^s}^2 + C(|b|_{W^{\frac{5}{2}+,\infty}})\, \beta^2 |u|_{H^{s+1}_\mu}^2 \; .
\end{eqnarray}
 where the implicit constant only depends on $\frac{1}{h_0} $. Therefore there exists $ C_0= C_0(\frac{1}{h_0}, |b|_{W^{\frac{5}{2}+,\infty}}) $ such that  for any $ 0<\beta<C_0 $ and any $ -\frac{3}{2}\le s<0 $, \eqref{estT} holds.
 \end{proof}
\subsection{Proof of the positivity of $ h=1+\eps\zeta-\beta b$}
 We prove the positivity of $ h=1+\eps\zeta -\beta b$ as soon as $ \zeta_0\in H^s(\R) $,  with $s>1/2$, 
\begin{lemma}\label{lemminmax}
Let  $s>1/2$ and let $\zeta_0 \in H^s(\R)$ and $ \zeta \in C([0,T],H^s(\R)  )$ satisfying the first equation of (\ref{BPW}) with  $ b\in H^{s+1} $ and $ u\in C([0,T] ; H^{s+1}(\R)). $
 If $1+\eps\zeta_0-\beta b>0$ on $ \R $ then $\inf_{[0,T]\times \R} 1+\eps\zeta-\beta b>0 $.
\end{lemma}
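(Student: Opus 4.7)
The first equation of \eqref{BPW} rewritten in terms of $h=1+\eps\zeta-\beta b$ becomes the linear transport equation
\begin{equation}\label{trh}
h_t + \eps\, u\, h_x + \eps\, u_x\, h = 0 \; ,
\end{equation}
obtained by multiplying $\zeta_t+(hu)_x=0$ by $\eps$ and using $\partial_t h=\eps\zeta_t$ (and the fact that $b$ is time independent). My plan is to solve \eqref{trh} along characteristics and then derive a uniform pointwise lower bound for $h$ from a uniform lower bound on $h_0$.

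Since $u\in C([0,T];H^{s+1}(\R))$ with $s>1/2$, the Sobolev embedding $H^{s+1}\hookrightarrow W^{1,\infty}$ yields $u,u_x\in C([0,T];L^\infty(\R))$, so $u$ is Lipschitz in $x$ uniformly in $t\in[0,T]$ and continuous in $t$. The classical Cauchy-Lipschitz theorem then produces, for every $x_0\in\R$, a unique $C^1$ characteristic $t\mapsto X(t,x_0)$ solving
\begin{equation*}
\frac{d}{dt} X(t,x_0) = \eps\, u(t,X(t,x_0)), \qquad X(0,x_0)=x_0 \; ,
\end{equation*}
and $X(t,\cdot)$ is a homeomorphism of $\R$ onto itself for each $t\in[0,T]$. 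Along these characteristics \eqref{trh} reduces to the scalar linear ODE $\frac{d}{dt}[h(t,X(t,x_0))]=-\eps\, u_x(t,X(t,x_0))\, h(t,X(t,x_0))$, whose explicit solution is
\begin{equation*}
h(t,X(t,x_0)) = h_0(x_0)\,\exp\Bigl(-\eps\int_0^t u_x(s,X(s,x_0))\,ds\Bigr) \; .
\end{equation*}

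To conclude I need that $\inf_\R h_0>0$, not merely $h_0>0$ pointwise. This follows from the hypothesis combined with Sobolev decay at infinity: since $\zeta_0\in H^s(\R)$ and $b\in H^{s+1}(\R)$ with $s>1/2$, both $\zeta_0$ and $b$ are continuous and vanish at $\pm\infty$, hence $h_0=1+\eps\zeta_0-\beta b$ is continuous on $\R$ with $h_0(x)\to 1$ as $|x|\to\infty$. Combining continuity, strict positivity on $\R$, and the limit at infinity yields $h_0^{\min}:=\inf_{\R}h_0>0$. Since $X(t,\cdot)$ is a bijection of $\R$, we finally obtain
\begin{equation*}
\inf_{[0,T]\times\R}(1+\eps\zeta-\beta b) \;\ge\; h_0^{\min}\,\exp\Bigl(-\eps\, T\, \|u_x\|_{L^\infty([0,T]\times\R)}\Bigr)\;>\;0 \; ,
\end{equation*}
which is the claimed strict positive lower bound.

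The only delicate step is the rigorous use of characteristics, which requires that $u$ be Lipschitz in $x$; this is precisely what the hypothesis $s>1/2$ ensures through Sobolev embedding. Everything else is either an ODE computation or a soft argument using continuity and decay at infinity.
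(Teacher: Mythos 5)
Your approach---rewriting the first equation of \eqref{BPW} as a transport equation for $h=1+\eps\zeta-\beta b$, integrating along the characteristics of $\eps u$, and reading off the exponential lower bound---is exactly the one used in the paper, and your remark that one must upgrade the pointwise positivity $h_0>0$ to $\inf_\R h_0>0$ via the decay of $\zeta_0$ and $b$ at infinity is a point the paper leaves implicit. However, there is a genuine gap in the step where you claim that along a characteristic the equation ``reduces to the scalar linear ODE'' $\frac{d}{dt}[h(t,X(t,x_0))]=-\eps\, u_x(t,X(t,x_0))\,h(t,X(t,x_0))$. This chain-rule computation uses $\frac{d}{dt}h(t,X(t,x_0))=(h_t+\eps u h_x)(t,X(t,x_0))$, which requires $h$ to be (at least) $C^1$ in $x$ and the equation to hold pointwise. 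At the stated regularity $\zeta\in C([0,T];H^s(\R))$ with $s>1/2$, the derivative $\zeta_x(t,\cdot)\in H^{s-1}(\R)$ is not even a function when $s<1$ (and not continuous when $s\le 3/2$); the transport equation only holds in the sense of distributions, and the pointwise evaluation along characteristics is not justified as written.

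The paper closes this gap by a two-step argument: it first assumes $\zeta_0\in H^{s+1}(\R)$, so that $\zeta\in C^1([0,T]\times\R)$ and the characteristics computation is legitimate, obtaining the uniform lower bound $\inf_{[0,T]\times\R}h\ge e^{-\eps T\|u_x\|_{L^\infty(]0,T[\times\R)}}\inf_\R h_0$; it then approximates a general $\zeta_0\in H^s(\R)$ by smooth data, uses the estimates of Section \ref{section3} to get convergence of the corresponding solutions in $C([0,T];H^s(\R))\hookrightarrow C([0,T]\times\R)$, and passes to the limit in the uniformly convergent lower bound. You would need to add such a regularization step, or alternatively invoke a uniqueness theorem for distributional solutions of the linear transport equation with Lipschitz velocity field in order to identify $h$ with the representation formula along characteristics; as written, the reduction to an ODE is not available at the regularity assumed in the lemma.
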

\begin{proof} We start by assuming that $\zeta_0\in H^{s+1}(\R)  $ so that $ \zeta\in C^1([0,T]\times \R) $. Since $u\in C([0,T] ; H^{s+1}(\R))\hookrightarrow C([0,T]; C^1(\R)) $ we can introduce the flow $ q(\cdot,\cdot) $ associated with $ u $ that is defined by 
 \begin{equation}\label{defq}
  \left\{ 
  \begin{array}{rcll}
  q_t (t,x) & = & \eps u(t,q(t,x))\, &, \; (t,x)\in [0,T] \times \R\\
  q(0,x) & =& x\, & , \; x\in\R \; 
  \end{array}
  \right. .
  \end{equation} 
Then the first equation in \eqref{4} ensures that 
$$
\frac{d}{dt} h(t,q(t,x))= -\eps u_x(t,q(t,x)) h(t,q(t,x))
$$
which leads to 
$$
h(t,q(t,x))=h(0,x) \exp \Bigl( -\eps\int_0^t u_x(s,q(s,x))\, ds\Bigr) ,\quad \forall (t,x)\in [0,T]\times \R\; .
$$
In particular, since for any $t\in [0,T] $, $q(t,\cdot) $ is an increasing $ C^1$-diffeomorphism of $ \R $ we deduce that 
\begin{equation}\label{fin3}
\inf_{[0,T]\times \R} |h|\ge e^{-\eps T \|u_x\|_{L^\infty(]0,T[\times \R)}} \inf_{\R} h_0 \; 
\end{equation}
which proves the result at this regularity. For $ \zeta_0\in H^s(\R) $, with $ s>1/2 $, it suffices to approximate $\zeta_0 $ by a sequence of smooth initial data $ (\zeta_{0,n})_{n\ge 0}  $ and pass to the limit on the associated sequence of solutions $(\zeta_n)_{n\ge 0} $ to the first equation of \eqref{BPW}. The same commutator estimates  that we used in Section \ref{section3} enable to prove that  the sequence of smooth solutions $(\zeta_n)_{n\ge 0} $ converges toward $ \zeta $ in $C([0,T]; H^s(\R))\hookrightarrow C([0,T]\times \R) $ and thus $ h=1+\eps\zeta -\beta b$ also satisfies \eqref{fin3}.
\end{proof}

\providecommand{\href}[2]{#2}

\end{document}